\newcommand{\A}{\mathbf A}
\newcommand{\Z}{\mathbf Z}
\newcommand{\Q}{\mathbf Q}
\newcommand{\C}{\mathbf C}
\newcommand{\disc}{\operatorname{disc}}
\newcommand{\Res}{\operatorname{Res}}
\newcommand{\Gal}{\operatorname{Gal}}
\newcommand{\Stab}{\operatorname{Stab}}
\newcommand{\Sym}{\operatorname{Sym}}
\newcommand{\Jac}{\operatorname{Jac}}
\newcommand{\p}{\mathfrak p}
\renewcommand{\P}{{\mathfrak P}}
\newcommand{\calS}{\mathcal S}
\newcommand{\calT}{\mathcal T}
\newcommand{\calP}{\mathcal P}
\newcommand{\calG}{\mathcal G}
\newcommand{\calC}{{\mathcal C}}
\newcommand{\calE}{{\mathcal E}}
\newcommand{\calX}{{\mathcal X}}
\renewcommand{\epsilon}{\varepsilon}
\renewcommand{\iff}{\Leftrightarrow}
\renewcommand{\to}{\rightarrow}
\newcommand{\Spec}{\operatorname{Spec}}
\newtheorem{thm}{Theorem}[section]
\newtheorem{lem}[thm]{Lemma}
\newtheorem{prop}[thm]{Proposition}
\newtheorem{cor}[thm]{Corollary}
\newtheorem{conj}[thm]{Conjecture}
\theoremstyle{definition}
\newtheorem{alg}[thm]{Algorithm}
\newtheorem{rem}[thm]{Remark}
\numberwithin{equation}{section}
\title[A local-global principle]{\small{A local-global principle in the dynamics of quadratic polynomials}}
\author{David Krumm}
\address{Department of Mathematics \\
Colby College \\
Waterville, ME 04901\\
USA}
\email{dkrumm@colby.edu}
\urladdr{http://personal.colby.edu/\textasciitilde dkrumm/}
\begin{document}
\maketitle
\begin{abstract}
Let $K$ be a number field, $f\in K[x]$ a quadratic polynomial, and $n\in\{1,2,3\}$. We show that if $f$ has a point of period $n$ in every non-archimedean completion of $K$, then $f$ has a point of period $n$ in $K$. For $n\in\{4,5\}$ we show that there exist at most finitely many linear conjugacy classes of quadratic polynomials over $K$ for which this local-global principle fails. By considering a stronger form of this principle, we strengthen global results obtained by Morton and Flynn-Poonen-Schaefer in the case $K=\Q$. More precisely, we show that for every quadratic polynomial $f\in\Q[x]$ there exist infinitely many primes $p$ such that $f$ does not have a point of period 4 in the $p$-adic field $\Q_p$. Conditional on knowing all rational points on a particular curve of genus 11, the same result is proved for points of period 5.
\end{abstract}

\section{Introduction}\label{intro_section}

Let $X$ be a set and $\phi:X\to X$ a map. For every nonnegative integer $n$, we denote by $\phi^n$ the $n$-fold composition of $\phi$ with itself. Thus, $\phi^0$ is the identity map, and $\phi^{n+1}=\phi\circ \phi^n$ for $n\ge 0$. We say that an element $x\in X$ is \textit{periodic} under $\phi$ if there exists a positive integer $n$ such that $\phi^n(x)=x$; in that case, the least such $n$ is called the \textit{period} of $x$. In this article we are interested in a particular property of periodic points in the case that $X$ is a number field and $\phi$ is a polynomial map. Thus, let $K$ be a number field and let $f\in K[x]$ be a nonconstant polynomial, viewed as a map $f:K\to K$. For any field extension $\tilde K$ of $K$ we may also regard $f$ as a map $\tilde K\to\tilde K$ and consider periodic points of $f$ in $\tilde K$.
It is clear that if $f$ has a point of period $n$ in $K$, then it has a point of period $n$ in every extension of $K$; in particular, for every finite place $v$ of $K$, $f$ has a point of period $n$ in the completion $K_v$. One may ask whether the converse holds: if $f$ has a point of period $n$ in every non-archimedean completion of $K$, must it then have a point of period $n$ in $K$? The purpose of this article is to address this question in the case that $f$ is a quadratic polynomial.

By a \textit{prime} of $K$ we mean a maximal ideal of the ring of integers of $K$. If $\p$ is a prime of $K$ dividing a rational prime $p$, there is a corresponding valuation $v_{\p}$ on $K$ extending the usual $p$-adic valuation on $\Q$. We denote by $K_{\p}$ the completion of $K$ with respect to the valuation $v_{\p}$. With this notation we state the following local-global principle, which is the main object of interest in this article.
\begin{align*}\tag*{$\calE(f,n)$}
\textit{If $f$ has a point of period $n$ in $K_{\p}$ for every prime $\p$, then $f$ has a point of period $n$ in $K$.}
\end{align*}

We will focus mainly on deciding whether this statement holds true in the case that $f$ is a quadratic polynomial and $n\le 5$. Our first result, proved in Theorems \ref{period12_lgp} and \ref{period3_lgp}, is that the answer is affirmative if $n$ is at most 3.

\begin{thm}\label{123_main_thm}
Let $K$ be a number field, $f\in K[x]$ a quadratic polynomial, and $n\in\{1,2,3\}$. Then the statement $\calE(f,n)$ holds true.
\end{thm}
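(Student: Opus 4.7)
The plan is as follows. Any quadratic polynomial $f \in K[x]$ is linearly conjugate over $K$ to one of the form $f_c(x) = x^2 + c$ for some $c \in K$, and since linear conjugation intertwines iteration, it suffices to establish $\calE(f_c, n)$ for $n \in \{1, 2, 3\}$. In each case I would encode period-$n$ points as roots of the appropriate divisor $\Phi_n(x) \in K[x]$ of $f_c^n(x) - x$, and then translate local solvability into group-theoretic data about the Galois group of the splitting field of $\Phi_n$ via Chebotarev's density theorem.

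For $n = 1, 2$ the relevant polynomial $\Phi_n$ is a quadratic: $\Phi_1(x) = x^2 - x + c$ with discriminant $1 - 4c$, and $\Phi_2(x) = (f_c^2(x) - x)/(f_c(x) - x) = x^2 + x + c + 1$ with discriminant $-4c - 3$. The exceptional value $c = -3/4$, for which $\Phi_2$ has a double root coinciding with a fixed point of $f_c$, must be dispatched first: at that value $f_c$ has no point of exact period 2 in any field, so the hypothesis of $\calE(f_c, 2)$ fails and the statement holds vacuously. Outside this case, local solvability at every $\p$ is equivalent to the appropriate discriminant being a square in every $K_\p$. The conclusion that it is then a square in $K$ itself follows from a standard consequence of Chebotarev: if $a \in K^\times$ is not a square, then a set of primes of $K$ of density $1/2$ is inert in the proper quadratic extension $K(\sqrt a)/K$, and $a$ is a non-square in each of those completions.

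For $n = 3$, let $\Phi_3(x) \in K[x]$ be the degree-$6$ polynomial whose roots are the period-3 points of $f_c$, let $L/K$ be its splitting field, and set $G = \Gal(L/K)$. The six roots of $\Phi_3$ partition into two orbits of length $3$ under $f_c$; since $G$ commutes with $f_c$, the resulting permutation action realizes a faithful embedding $G \hookrightarrow C_3 \wr S_2$ (of order $18$). Combined with Hensel's lemma at primes of good reduction, the hypothesis of $\calE(f_c, 3)$ forces, via Chebotarev, every element of $G$ to have at least one fixed point on the six roots. I would then enumerate the subgroups of $C_3 \wr S_2$ satisfying this condition: any element that swaps the two orbits acts without fixed points, so $G$ must lie in the base group $C_3 \times C_3$; and within $C_3 \times C_3$ only the identity and the two coordinate copies of $C_3$ pass the test. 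Each of these three subgroups has a global fixed point on the six roots, hence $\Phi_3$ has a root in $K$ and $f_c$ admits a $K$-rational period-$3$ point.

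The main obstacle is the group-theoretic verification in the $n = 3$ case. It is elementary, but it relies crucially on $G$ being no larger than $C_3 \wr S_2$, which in turn uses the commutation of $G$ with $f_c$ in an essential way. A secondary point is the translation between ``$\Phi_3$ has a root in $K_\p$'' and ``Frobenius at $\p$ has a fixed point on the roots'', which can fail at the finitely many primes dividing the discriminant of $\Phi_3$ or ramifying in $L/K$; but the Chebotarev density argument is insensitive to finite exceptional sets.
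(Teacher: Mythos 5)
Your approach for $n=1,2$ is essentially the paper's: translate to $\Phi_{n,f}$, observe it is quadratic, and apply the standard local-global fact for quadratic polynomials via Chebotarev. You handle the $\Phi_2$ degenerate case ($c=-3/4$) correctly, and the degenerate case for $\Phi_1$ ($c=1/4$) is harmless since the double root is a fixed point in $K$. For $n=3$ with $\disc\Phi_{3,f}\ne0$, you reason by hand through the subgroups of $C_3\wr S_2$ with the fixed-point property; this is the same argument the paper carries out algorithmically (its Algorithm 3.9 with input $(3,2)$ gives an empty output for the same reason you identify: only $\{1\}$ and the two coordinate copies of $C_3$ pass the fixed-point test, and each of these fixes a root pointwise, contradicting irreducibility). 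Doing the group theory by hand is perfectly legitimate here and arguably more illuminating.

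However, there is a genuine gap in the $n=3$ case. Your argument assumes throughout that the six roots of $\Phi_{3,f}$ are distinct and fall into two $f$-orbits of size $3$, and that a $K$-rational root of $\Phi_3$ is automatically a point of period $3$. Both of these fail when $\disc\Phi_{3,f}=0$, and this is not a matter of finitely many bad primes: one has $\disc\Phi_{3,c}=-(7+4c)^3(7+4c+16c^2)^2$, so $c=-7/4$ (always in $K$) and any $c\in K$ with $16c^2+4c+7=0$ (which exists whenever $\sqrt{-3}\in K$) genuinely need separate treatment. In the second family, $\Phi_{3,c}$ has a linear factor over $K$ whose root is a \emph{fixed point}, not a period-$3$ point, so the inference from ``$\Phi_3$ has a root in $K$'' to ``$f_c$ has a $K$-rational $3$-cycle'' breaks down; the paper handles this by factoring $\Phi_{3,c}$ explicitly into a linear and a cubic, applying the irreducibility/density dichotomy to the cubic factor, and checking via a resultant that the cubic's roots cannot be fixed points. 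In the case $c=-7/4$, $\Phi_{3,c}$ is (up to scaling) the square of a cubic, so the degree-$6$ orbit picture collapses; the paper again argues directly with the cubic. You should either add these explicit cases or explain why they can be absorbed. A minor stylistic point: the invocation of Hensel's lemma is unnecessary; the translation from ``$\Phi_3$ has a root in $K_\p$'' to ``Frobenius at $\p$ has a fixed point'' follows from the standard Dedekind/Frobenius correspondence at unramified primes, as in the paper's Lemmas 2.3 and 2.4.
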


Still restricting attention to quadratic polynomials, in the cases $n=4$ and $n=5$ we show that counterexamples to this local-global principle---if indeed any exist over a given field $K$---are extremely rare in a precise sense explained below. We say that a polynomial $g\in K[x]$ is \textit{linearly conjugate} to $f$ over $K$ if there exists a linear polynomial $\ell\in K[x]$ such that $g=\ell^{-1}\circ f\circ\ell$. In that case, it is a simple exercise to show that the statements $\calE(f,n)$ and $\calE(g,n)$ are equivalent. If $f$ is a quadratic polynomial, then there is a unique element $c\in K$ such that $f$ is linearly conjugate to the polynomial $f_c(x):=x^2+c$, and hence $\calE(f,n)$ is equivalent to $\calE(f_c,n)$. We prove in Theorems \ref{period4_lgp} and \ref{period5_lgp} that if $n=4$ or 5, then there exist at most finitely many elements $c\in K$ for which the statement $\mathcal E(f_c, n)$ is false. Thus, we obtain the following.

\begin{thm}\label{45_main_thm}
Let $K$ be a number field and let $n\in\{4,5\}$. Then there exist, up to linear conjugacy, only finitely many quadratic polynomials $f\in K[x]$ for which the statement $\mathcal E(f, n)$ fails to hold.
\end{thm}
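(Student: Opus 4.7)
\textbf{Proof plan for Theorem \ref{45_main_thm}.}

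The plan is to reduce the assertion to a Faltings-type finiteness statement for covers of dynatomic curves. Let $\Phi_n(x,c)$ denote the $n$-th dynatomic polynomial of $f_c(x) = x^2 + c$; apart from finitely many exceptional values of $c$, its roots in an algebraic closure of $K$ are exactly the points of exact period $n$ for $f_c$. The failure of $\calE(f_c,n)$ is equivalent to the statement that $\Phi_n(x,c)$ has roots in every $K_{\p}$ but no root in $K$, and my goal is to show that the set of such $c$ is finite.

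Fix such a bad $c$ and let $G_c$ be the Galois group of $\Phi_n(x,c)$ over $K$, acting on the set of its roots. Because $f_c$ commutes with the Galois action, $G_c$ embeds into the wreath product $C_n \wr S_{r_n}$, where $r_n$ is the number of $f_c$-cycles of length $n$ (so $r_4 = 3$ and $r_5 = 6$). The Chebotarev density theorem, applied to unramified primes, implies that the everywhere-local hypothesis forces every element of $G_c$ to fix at least one root, while the absence of a $K$-rational period-$n$ point means $G_c$ has no common fixed point. By the classical theorem of Jordan, a transitive permutation group on more than one point contains a derangement; combined with the fact that $C_n$ acts transitively on each individual $f_c$-cycle, this forces the image of $G_c$ in $S_{r_n}$ to be intransitive. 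For $n=4$ the image in $S_3$ is then contained in a point-stabilizer $\cong S_2$; for $n=5$ the image in $S_6$ preserves a nontrivial partition of $\{1,\dots,6\}$. In each case, the requirement that every element fix some root singles out finitely many conjugacy classes of subgroups $H \le C_n \wr S_{r_n}$ that can realize $G_c$ for a bad $c$.

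For each such candidate $H$, consider the cover $X_H \to \mathbb{P}^1_c$ defined as the normalization of $\mathbb{P}^1_c$ in the fixed field of $H$ inside the Galois closure of $K(c)(\text{roots of }\Phi_n(x,c))$ over $K(c)$. Any $c \in K$ for which $G_c$ is conjugate into $H$ lifts to a $K$-rational point of $X_H$. Using the Riemann-Hurwitz formula together with the known genera of the full dynatomic curves---$Y_1(4)$ has genus $2$ (Morton) and $Y_1(5)$ has genus $14$ (Flynn-Poonen-Schaefer)---each $X_H$ can be shown to have geometric genus at least $2$. Faltings' theorem then yields $|X_H(K)|<\infty$, and a union over the finite list of candidate subgroups $H$ completes the argument.

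The principal obstacle is the explicit genus verification in the last step, especially for $n=5$, where the larger number of intransitive partition types of $\{1,\dots,6\}$ produces a longer list of covers $X_H$ of higher degree. One must rule out any occurrence of genus-$0$ or genus-$1$ components that would contribute infinitely many bad $c$, which requires combining careful ramification analysis with the explicit equations of the dynatomic curves worked out by Morton and by Flynn-Poonen-Schaefer.
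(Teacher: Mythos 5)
Your overall strategy is correct in spirit and runs parallel to the paper's: Chebotarev forces a fixed-point condition on every element of the dynatomic Galois group $G_c$; this constrains the possible $G_c$; the bad $c$ become rational points on curves; Faltings finishes. Your Jordan-derangement observation is a nice touch and is correct: since any $g\in G_c$ fixing one root must fix its entire $f_c$-cycle pointwise (the centralizer of an $n$-cycle in $S_n$ is the cyclic group, and its only element with a fixed point is the identity), the image of $G_c$ in $S_{r_n}$ has no derangement, hence is intransitive by Jordan's theorem.

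There is, however, a genuine gap, and it is exactly the step you flag as the ``principal obstacle'': you never establish that the covers $X_H$ have geometric genus at least $2$. This is the heart of the theorem, not a routine verification. Intransitivity of the image in $S_{r_n}$ is a much weaker constraint than the two conditions actually needed (every element of $G_c$ stabilizes some root-cycle representative $\eta_i$, but no $\eta_i$ is stabilized by all of $G_c$), so your candidate list of subgroups $H$ is larger than the true one, and there is no argument given that all the resulting covers $X_H$ avoid genus $0$ or $1$. The paper closes the gap very differently. First, it determines by explicit computation in the wreath product $W=C_n\wr S_{r_n}$ exactly which subgroups satisfy the two fixed-point conditions: a single group $\Z/2\Z\times\Z/2\Z$ for $n=4$, and twelve groups of three isomorphism types ($(\Z/5\Z)^2$, $S_3\times\Z/5\Z$, $(\Z/2\Z)^2$) for $n=5$. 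Second, rather than estimating genera of abstract quotient covers $X_H$ via Riemann--Hurwitz, it realizes the exceptional $c$ as coordinates of rational points on \emph{explicit} curves whose genera can be computed directly: for $n=4$, the forced factorization of $\Phi_{4,c}$ into six quadratics with roots swapped by $f_c^2$ feeds into a known rational parametrization of quadratic period-$4$ orbits, producing a genus-$9$ plane curve in auxiliary variables $(u,t)$; for $n=5$, a Galois-invariant $5$-cycle forces a point on the genus-$2$ dynamical modular curve $C_0(5)$, while an irreducible quadratic factor forces a point on an explicit genus-$11$ curve $\calX$ obtained by polynomial division of $\tau_5(c,t)$ by a generic monic quadratic. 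Without an analogue of either the exact subgroup computation or the explicit genus bounds, your Faltings step does not go through, and the proof as written is incomplete.
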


Focusing now on the case $K=\Q$, we can improve upon Theorem \ref{45_main_thm} by providing more information about the possible exceptions to the principle $\mathcal E(f, n)$. For $n=4$, we show in Theorem \ref{period4_rational_thm} that there is \textit{no} exception; in fact the premise of $\mathcal E(f, 4)$ is always false, in the following strong sense.

\begin{thm}\label{strong_morton_thm}
For every quadratic polynomial $f\in\Q[x]$ there exist infinitely many primes $p$ such that $f$ does not have a point of period $4$ in $\Q_p$.
\end{thm}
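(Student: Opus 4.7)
The plan is to reduce Theorem~\ref{strong_morton_thm} to a Chebotarev density argument applied to the trace polynomial of the period-$4$ cycles of $f_c(x) = x^2 + c$. By the reduction to the form $f_c$ discussed before Theorem~\ref{45_main_thm}, it suffices to treat $f = f_c$ for some $c \in \Q$. Let $\delta_4(z,c) \in \Q[z]$ denote the cubic polynomial whose roots over $\overline{\Q}$ are the three cycle traces $\sum_{j=0}^{3} f_c^j(\alpha)$, one for each period-$4$ cycle of $f_c$. Any period-$4$ point in $\Q_p$ has its entire orbit in $\Q_p$, and in particular its cycle trace lies in $\Q_p$, so it is enough to produce infinitely many primes $p$ for which $\delta_4(z,c)$ has no root in $\Q_p$.

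Suppose first that $\delta_4(z,c)$ has no rational root. A cubic with no rational root is irreducible, so the Galois group of its splitting field is either $A_3$ or $S_3$ and in either case contains a $3$-cycle. By the Chebotarev density theorem, a set of primes $p$ of positive density has Frobenius acting as a $3$-cycle on the roots of $\delta_4(z,c)$. Excluding the finitely many $p$ dividing the discriminant of $\delta_4(z,c)$, the reduction modulo $p$ is then an irreducible cubic over $\F_p$, hence has no root in $\F_p$; Hensel's lemma therefore guarantees that $\delta_4(z,c)$ has no root in $\Q_p$. Thus $f_c$ has no point of period $4$ in $\Q_p$, as required.

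To see that this case applies for \emph{essentially every} $c \in \Q$, we invoke the work of Morton (and Flynn--Poonen--Schaefer) on the dynamical modular curve $X_1(4)$. The affine plane curve $\delta_4(z,c) = 0$ is a model of $X_1(4)$, which has genus $2$, and these authors determined $X_1(4)(\Q)$: it consists only of cusps, i.e.\ points at which the parametrized period-$4$ cycle has degenerated into a cycle of strictly smaller period (a fixed point or a period-$2$ cycle of $f_c$). It follows that $\delta_4(z,c)$ has a rational root in $z$ only when $c$ lies in a finite, explicitly computable list of exceptional values in $\Q$.

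What remains, and what I expect to be the main technical obstacle, is to dispose of these finitely many exceptional $c$ individually. At such a $c = c_0$, a rational root $z_0$ of $\delta_4(z,c_0)$ corresponds to a degenerate ``cycle'' that actually consists of points of period strictly less than $4$, and so does not contribute to the period-$4$ points of $f_{c_0}$. After stripping this degenerate factor from $\Phi_4(x,c_0)$, one obtains a polynomial $h(x) \in \Q[x]$ whose roots are the genuine period-$4$ points of $f_{c_0}$; these organize into at most two cycles, and by Morton's theorem $h$ has no rational root. The associated trace polynomial now has degree at most two, and the preceding argument applies either to this quadratic (if it is irreducible) or, if it splits over $\Q$, to $h$ itself via a short counting argument inside $H_1 \times H_2$ (where $H_i \le C_4$ is the Galois group acting on the $i$th surviving cycle): since each $H_i$ is nontrivial, one checks that the kernels of the two projections cannot cover the full Galois group, producing a Frobenius class that fixes no root of $h$. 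Carrying this analysis out explicitly at each exceptional value of $c$ completes the proof.
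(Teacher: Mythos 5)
There is a genuine gap, and it stems from a misidentification of the trace curve. The plane curve $\delta_4(z,c)=0$ is a model of $C_0(4)$ (the quotient of the dynatomic curve by the cycle rotation), not of $C_1(4)=X_1(4)$. Morton's genus-$2$ computation and his determination of rational points pertain to $C_1(4)$, whose points $(c,\alpha)$ parametrize individual period-$4$ points; they say nothing about which $c$ admit a $\Gal(\bar\Q/\Q)$-stable $4$-cycle with rational trace. In fact there are infinitely many $c\in\Q$ for which $\Phi_{4,c}$ has an irreducible quadratic factor whose two roots lie in a genuine period-$4$ cycle: Lemma~\ref{quad_factor_lem} together with \eqref{period4_cparameter} gives a rational parametrization $u\mapsto\bigl(c(u),p_u\bigr)$ of such pairs, and for $u$ outside a finite set the quadratic $p_u$ is irreducible because $-u(u^2+1)(u^2-2u-1)$ is a rational square only for the finitely many rational points on the genus-$2$ curve \eqref{period4_curve}. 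For each such $c$ your cubic $\delta_4(\cdot,c)$ has a rational root that does \emph{not} correspond to a cusp, so the ``finitely many exceptional $c$, all cuspidal'' step fails and the argument does not reduce to a finite check.

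Your fallback argument also omits the hardest configuration. Nothing in it rules out that all three traces are rational while the Galois group of $\Phi_{4,c}$ is $\Z/2\Z\times\Z/2\Z$, with each of its three index-$2$ subgroups equal to the kernel of the action on one of the three Galois-stable $4$-cycles. In that case $\Phi_{4,c}$ splits into six irreducible quadratics and every Galois element fixes some root, so by Chebotarev every sufficiently large prime $p$ yields a period-$4$ point in $\Q_p$ and your method produces nothing. This is precisely the scenario isolated in Lemma~\ref{period4_splitting}; the paper disposes of it by invoking Panraksa's theorem that $\Phi_{4,c}$ can never have four or more irreducible quadratic factors over $\Q$ --- an essential input for which your proposal has no substitute. (Your route via the trace resolvent is genuinely different from the paper's Galois-group enumeration through Algorithm~\ref{main_algorithm}, and could in principle be more elementary, but both must contend with, and your proposal does not resolve, this $\Z/2\Z\times\Z/2\Z$ obstruction.)
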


This result builds on and strengthens a theorem of Morton \cite{morton_4cycles} stating that there is no quadratic polynomial over $\Q$ having a rational point of period 4. The corresponding statement for points of period 5 was proved by Flynn-Poonen-Schaefer \cite{flynn-poonen-schaefer}. In Theorem \ref{period5_rational_thm} we extend their result as well, though in this case we cannot entirely rule out the failure of the local-global principle; the obstacle in this case is a problem of determining all rational points on a particular algebraic curve. More precisely, let $\calX\subset\A^3=\Spec\Q[a,b,c]$ be the curve of genus $11$ defined by the equations $r_1(a,b,c)=r_0(a,b,c)=0$, where
\begin{align}\label{X_curve_equations}
\begin{split}
r_1(a,b,c) &= (19a + 17)c^2 + (11a^3 + 18a^2 + 22ab + 19a + 18b - 24)c + a^5 + a^4 +\\
&\hspace{5mm} 4a^3b + 3a^3 + 3a^2b + 11a^2 + 3ab^2 + 6ab + 44a + b^2 + 11b + 36, \\
r_0(a,b,c) &= 9c^3 + (19b + 40)c^2 + (11a^2b + 18ab + 11b^2 + 19b + 28)c + a^4b +\\
&\hspace{5mm} a^3b + 3a^2b^2 + 3a^2b + 2ab^2 + 11ab + b^3 + 3b^2 + 44b + 32.
\end{split}
\end{align}
An extensive search for rational points on $\calX$ has yielded only the two points $(1, 8, -2)$ and $(-2,-1,-4/3)$.
\begin{thm}\label{strong_fps_thm}
Suppose that $\calX$ has no rational point other than the two known points. Then for every quadratic polynomial $f\in\Q[x]$ there exist infinitely many primes $p$ such that $f$ does not have a point of period $5$ in $\Q_p$.
\end{thm}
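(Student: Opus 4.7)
The plan is to mirror the strategy used for Theorem \ref{strong_morton_thm} (the period-4 case), with the curve $\calX$ serving as the arithmetic-geometric obstruction. Since the set of primes $p$ at which $f$ has a period-$5$ point in $\Q_p$ is an invariant of the linear conjugacy class of $f$ (a $\Q$-linear polynomial $\ell$ induces a bijection $\Q_p \to \Q_p$ commuting with periodicity), it suffices to prove the theorem for $f = f_c(x) = x^2 + c$ with $c \in \Q$. Suppose for contradiction that some $c \in \Q$ is such that only finitely many primes $p$ have $f_c$ lacking a point of period $5$ in $\Q_p$. Let $\Phi_5(x, c) \in \Q[x]$ be the fifth dynatomic polynomial of $f_c$ (of degree $30$), let $L$ be its splitting field, and let $G = \mathrm{Gal}(L/\Q)$ act on the set $R$ of $30$ roots. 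For a prime $p$ of good reduction, $f_c$ has a period-$5$ point in $\Q_p$ if and only if a Frobenius above $p$ has a fixed point on $R$. The Chebotarev density theorem then translates the standing hypothesis into the statement that every element of $G$ has a fixed point on $R$; equivalently, $G$ contains no derangement.

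The Flynn--Poonen--Schaefer theorem \cite{flynn-poonen-schaefer} implies that $\Phi_5(x, c)$ has no rational root, so $G$ has no global fixed point on $R$. Jordan's theorem then rules out a transitive action of $G$, so $\Phi_5(x, c)$ is reducible over $\Q$, and the $G$-orbits on $R$ must fit together in such a way that every $g \in G$ fixes a root in at least one orbit. The further structure that $f_c$ is $\Q$-rational and commutes with $G$, partitioning $R$ into six $\langle f_c \rangle$-orbits of length $5$ which $G$ permutes as a subgroup of the wreath product, imposes rigid constraints on the possible factorization shapes. I would enumerate the feasible factorization types of $\Phi_5(x, c)$ compatible with the combined ``no derangement / no global fixed point / $\langle f_c \rangle$-equivariance'' conditions, and for each type translate the group-theoretic conditions into explicit polynomial identities among the coefficients of the irreducible $\Q$-factors. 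The key algebraic claim, whose verification is the central step, is that these identities are exactly captured by the existence of parameters $a, b \in \Q$ with $(a, b, c) \in \calX(\Q)$, where $a, b$ arise as natural resolvent invariants (for instance, symmetric functions of the roots of a designated small-degree factor).

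Granting this claim, the hypothesis $\calX(\Q) = \{(1, 8, -2),\, (-2, -1, -4/3)\}$ restricts the possible exceptional $c$-values to $c = -2$ and $c = -4/3$. For each of these two values I would handle the case directly: compute the factorization of $\Phi_5(x, c) \in \Q[x]$, determine the Galois group of each irreducible factor, and exhibit an explicit derangement in $G$; Chebotarev then supplies infinitely many primes $p$ at which $f_c$ has no point of period $5$ in $\Q_p$, contradicting the standing assumption also in these two residual cases. The principal obstacle is the middle step: proving that the Galois-theoretic ``no derangement'' condition on $G$, under the $\langle f_c \rangle$-equivariance, is equivalent to a rational point on the explicit curve $\calX$ defined by $r_0 = r_1 = 0$. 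This will require a careful case analysis over factorization patterns of $\Phi_5(x, c)$ and a deliberate choice of the auxiliary coordinates $a, b$ so that their defining relations recover the equations \eqref{X_curve_equations}.
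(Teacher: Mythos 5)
Your high-level scaffolding matches the paper's: reduce to $f_c$, convert the hypothesis via Chebotarev into a ``no derangement'' condition on the Galois group $G$ of $\Phi_{5,c}$ inside the wreath-product centralizer, invoke Flynn--Poonen--Schaefer to kill the rational-root case, and then use rational points on a curve to pin down the possible $c$'s. But there are two genuine gaps.

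First, and most seriously, your plan funnels \emph{every} obstruction into a rational point on $\calX$, whereas the paper's reduction (Lemmas \ref{period5_splitting}, \ref{period5_finiteness_lem}, and Proposition \ref{period_5_nonsplitting}) splits into a dichotomy over \emph{two} curves: the genus-2 modular curve $C_0(5)$, and $\calX$. The enumeration of feasible $(G,D)$ types yields three possibilities, and the first (namely $G\cong\Z/5\Z\times\Z/5\Z$, $D=\{5\}$, every $5$-cycle Galois-invariant) produces \emph{no} irreducible quadratic factor at all, so no natural map to $\calX$; it lands instead on a rational point of $C_0(5)$. The Flynn--Poonen--Schaefer determination of $C_0(5)(\Q)$ then forces $c\in\{-2,-16/9,-64/9\}$ in that branch, and the final contradiction comes from comparing the factorization shapes of $\Phi_{5,c}$ at those three values (Table~1 in the paper) against the degree sets allowed by Lemma \ref{period5_splitting}. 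Your proposal, which plans to check only $c\in\{-2,-4/3\}$ ``directly'' as the $\calX$-coordinates, would leave $c=-16/9$ and $c=-64/9$ unaddressed. Relatedly, your choice of auxiliary coordinates $a,b$ as ``symmetric functions of the roots of a small-degree factor'' is the wrong parametrization to recover $\calX$: in the paper $a,b$ are the coefficients of the minimal polynomial of the image $\theta=\theta_5(c,\alpha)$ under the quotient map $C_1(5)\to C_0(5)$, and the equations $r_0=r_1=0$ arise as the remainder of $\tau_5(c,t)$ upon division by $t^2-at-b$; without first passing through $C_0(5)$ you would not land on these equations.

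Second, your middle step (``enumerate feasible factorization types compatible with the no-derangement/no-fixed-point/equivariance conditions'') is precisely the content of Algorithm \ref{main_algorithm} run on $(5,2)$, which required sifting over $20{,}844$ conjugacy classes of subgroups of the order-$11{,}250{,}000$ centralizer $W\le S_{30}$ and produced exactly $12$ admissible groups falling into three structural classes. Your proposal treats this as a routine check rather than as the central computation it is, and gives no argument that the list is as short as it must be for the downstream curve reduction to terminate. ``Jordan's theorem rules out transitivity'' is a sound but far weaker observation than what is actually needed: the proof requires the full statement that $G$ is a union of the stabilizers of one chosen point per $f_c$-cycle (Proposition \ref{density1_prop}), which is sharper than mere intransitivity and is what makes the enumeration tractable.

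In short: the route is recognizable, but the proposal collapses the $C_0(5)$/$\calX$ dichotomy into a single curve and thereby omits the branch that the Flynn--Poonen--Schaefer result disposes of; as written, the argument would not close for $c=-16/9$ or $c=-64/9$.
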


There is hope of dispensing with the assumption that $\calX$ has only two rational points. Indeed, one can show that the Jacobian of the nonsingular projective model of $\calX$ has a 2-dimensional isogeny factor $J$ for which the group $J(\Q)$ has rank 1; this suggests that an application of Chabauty-Coleman techniques may succeed in determining all rational points on $\calX$. The details of this approach will appear in the article \cite{doyle-krumm-wetherell}, which deals with several problems of this type.
 
Based on the results of \cite{flynn-poonen-schaefer} and \cite{morton_4cycles}, Poonen \cite{poonen_prep} conjectured that if $f\in\Q[x]$ is a quadratic polynomial and $n>3$, then $f$ does not have a rational point of period $n$. In view of Theorems \ref{strong_morton_thm} and \ref{strong_fps_thm}, which extend the results of \cite{flynn-poonen-schaefer} and \cite{morton_4cycles}, we propose the following stronger statement.

\begin{conj}\label{main_conj}
Let $f\in\Q[x]$ be a quadratic polynomial and $n>3$ an integer. There exist infinitely many primes $p$ such that $f$ does not have a point of period $n$ in $\Q_p$.
\end{conj}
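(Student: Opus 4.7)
My plan is to reformulate Conjecture \ref{main_conj} in Galois-theoretic terms and then reduce it to a case analysis for each $n$. Since linear conjugacy preserves both the property of having a point of period $n$ and the property of having such a point in $\Q_p$, it suffices to consider polynomials of the form $f_c(x) = x^2 + c$ for $c \in \Q$. Let $\Phi_n(c, x) \in \Q[x]$ denote the $n$-th dynatomic polynomial of $f_c$, so that (for all but finitely many $c$) its roots in $\bar{\Q}$ are exactly the period-$n$ points of $f_c$. A standard argument combining the Chebotarev density theorem with Hensel's lemma shows that $f_c$ has a point of period $n$ in $\Q_p$ for all but finitely many primes $p$ if and only if every element of $G_c := \Gal(\Phi_n(c, x)/\Q)$ fixes at least one root of $\Phi_n(c,x)$. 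Hence Conjecture \ref{main_conj} is equivalent to the assertion that for every $n > 3$ and every $c \in \Q$, the group $G_c$ contains an element acting on the roots without any fixed point.

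The strategy then splits into two parts, mirroring the arguments used for Theorems \ref{strong_morton_thm} and \ref{strong_fps_thm}. First, one rules out the trivial obstruction of a rational period-$n$ point (which would be fixed by every element of $G_c$); this is Poonen's conjecture for quadratic polynomials, known for $n = 4$ and, conditionally, for $n = 5$. In general it reduces to determining the rational points on the dynatomic modular curve parametrizing pairs $(c, x)$ with $x$ a period-$n$ point of $f_c$. Second, and more delicately, one must rule out configurations in which $\Phi_n(c, x)$ factors nontrivially over $\Q$ with no linear factor, yet $G_c$ still has every element fixing some root. For example, the Klein four-group can act on six points partitioned into three orbits of size two, with the three distinct subgroups of order two as point stabilizers; every non-identity element then fixes exactly one orbit pointwise, so no element is fixed-point-free. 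Any $c$ for which $\Phi_n(c, x)$ realized such an action would be a counterexample to Conjecture \ref{main_conj}. For each such pathological factorization-plus-Galois pattern, one would construct an auxiliary algebraic variety parametrizing the admissible values of $c$---the curve $\calX$ in Theorem \ref{strong_fps_thm} is precisely such an object for a particular configuration when $n = 5$---and show that its rational points do not yield a genuine counterexample.

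The main obstacle is the rapid combinatorial growth of this second step as $n$ increases. For each $n$ one must enumerate the factorization types of $\Phi_n(c, x)$ and the candidate Galois groups lacking fixed-point-free elements; each pattern gives an auxiliary subvariety of the affine $c$-line, typically a curve of increasing genus, whose rational points must then be determined. The genera grow so rapidly that even Faltings' theorem becomes hard to apply uniformly, and explicit methods such as Chabauty--Coleman demand an isogeny factor of the Jacobian with small Mordell--Weil rank, which one cannot guarantee in general. A truly uniform proof of Conjecture \ref{main_conj} will likely require a more global approach than case-by-case analysis---perhaps an arithmetic-statistical study of the distribution of Galois groups in the family $\{\Phi_n(c, x)\}_{c \in \Q}$, or a direct construction of fixed-point-free Frobenius elements that bypasses the need to classify factorization types individually.
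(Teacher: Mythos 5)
Since the statement in question is an open conjecture, neither the paper nor you actually proves it; what you have written is a strategy sketch, and that sketch is, to its credit, faithful to the paper's own framework. Your Galois-theoretic reformulation (that $\delta(\calS_{\Phi_{n,c}})<1$ is equivalent to the existence of a fixed-point-free element of $G_c$ acting on the roots of $\Phi_{n,c}$) is exactly the content of Theorem \ref{bb_density_thm} and Proposition \ref{density1_prop}, your two-stage reduction (rule out a rational period-$n$ point, then rule out fixed-point-cover factorization patterns such as the $\Z/2\Z\times\Z/2\Z$ acting on quadratic orbits) matches the engine of Lemmas \ref{period4_splitting} and \ref{period5_splitting}, and your identification of the auxiliary curves culminating in $\calX$ is correct. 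The one substantive thing you miss is that the paper does already prove a concrete partial result towards the conjecture for \emph{arbitrary} $n$, namely Theorem \ref{strong_poonen_evidence}: by Bousch's theorem the generic dynatomic polynomial $\Phi_n(c,x)$ is irreducible over $\C(c)$, so by Hilbert irreducibility $\Phi_{n,c}$ is irreducible over $\Q$ for all $c$ outside a thin set $I(n)$, and then Corollary \ref{irred_density_cor} gives $\delta(\calS_{\Phi_{n,c}})<1$ immediately. That is a sharper and more self-contained statement than the speculative ``arithmetic-statistical study of the distribution of Galois groups'' you gesture at in your closing paragraph, and it is worth incorporating: it shows that the only potential obstructions live in a thin set, which both reframes the difficulty and suggests that any uniform approach must essentially be an effective Hilbert irreducibility argument for dynatomic polynomials.
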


In addition to our results for $n=4$ and $n=5$, we provide evidence for Conjecture \ref{main_conj} by showing that for every value of $n$, the conjecture holds for `most' quadratic polynomials. To make this more precise we use the notion of a \textit{thin} set in the sense of Serre; see \cite[\S 9.1]{serre_lectures} for details.

\begin{thm}
Let $n$ be a positive integer. There is a thin subset $I(n)\subseteq\Q$ with the following property: if $f\in\Q[x]$ is a quadratic polynomial linearly conjugate to $f_c$ with $c\not\in I(n)$, then there exist infinitely many primes $p$ such that $f$ does not have a point of period $n$ in $\Q_p$.
\end{thm}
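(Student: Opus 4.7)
The plan is to apply Hilbert's irreducibility theorem and Chebotarev's density theorem to the $n$-th dynatomic polynomial $\Phi_n(x,c)\in\Z[x,c]$ of the family $f_c(x)=x^2+c$, whose roots (for generic $c$) are the points of exact period $n$ of $f_c$. Viewing $\Phi_n$ as a polynomial in $x$ with coefficients in $\Q(c)$, the essential input is the theorem of Bousch that $\Phi_n(x,c)$ is irreducible over $\Q(c)$ for every $n\ge 1$. Let $G_n$ denote its Galois group over $\Q(c)$, regarded as a permutation group on the set of $d=\deg_x\Phi_n$ roots. Irreducibility makes this action transitive, and a classical theorem of Jordan asserts that every finite transitive permutation group of degree $\ge 2$ contains a fixed-point-free element; fix such an element $\sigma_0\in G_n$.

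By Hilbert's irreducibility theorem, there is a thin set $I_0(n)\subseteq\Q$ such that for every $c\in\Q\setminus I_0(n)$ the Galois group of the specialized polynomial $\Phi_n(x,c)\in\Q[x]$ over $\Q$ is isomorphic to $G_n$ as a permutation group. Let $I(n)$ be the union of $I_0(n)$ with the finite set of $c\in\Q$ at which some root of $\Phi_n(x,c)$ has period strictly less than $n$ under $f_c$; the resulting $I(n)$ is still thin. Now fix $c\in\Q\setminus I(n)$ and let $S$ be the finite set of primes dividing the denominator of $c$ or at which $\Phi_n(x,c)$ has bad reduction. By Chebotarev's density theorem, the set of primes $p\notin S$ whose Frobenius acts on the roots of $\Phi_n(x,c)$ as a conjugate of $\sigma_0$ has positive density, hence is infinite. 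For each such $p$, the reduction $\Phi_n(x,c)\bmod p$ has no root in $\F_p$. Since $c\notin I(n)$, every point of period $n$ of $f_c$ is a root of $\Phi_n(x,c)$, and for $p\notin S$ an elementary valuation argument shows any root of $\Phi_n(x,c)$ in $\Q_p$ is a $p$-adic integer. If such a root existed in $\Q_p$, reducing modulo $p$ would produce a root in $\F_p$, a contradiction. Hence $f_c$ has no point of period $n$ in $\Q_p$, and since $f$ is linearly conjugate over $\Q$ to $f_c$, the same holds for $f$.

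The main obstacle is the irreducibility of the generic dynatomic polynomial $\Phi_n(x,c)$ over $\Q(c)$, which underlies the transitivity of $G_n$ and therefore the applicability of Jordan's theorem; this is a nontrivial theorem. Granted this, the remaining steps---Hilbert irreducibility to pass from the generic to the specialized Galois group, Chebotarev to produce infinitely many primes with a fixed-point-free Frobenius, and the reduction argument from $\F_p$ to $\Q_p$---are routine. A secondary technical point is the distinction between roots of $\Phi_n(x,c)$ and points of exact period $n$ of $f_c$: these coincide except at a finite set of exceptional $c$, which are absorbed into $I(n)$.
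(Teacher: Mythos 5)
Your argument is correct and follows essentially the same route as the paper: Bousch's irreducibility of $\Phi_n(c,x)$ over $\Q(c)$, Hilbert irreducibility to get an irreducible specialization for $c$ outside a thin set, and then a Chebotarev argument built on the group-theoretic fact (Jordan's theorem) that a transitive group of degree $>1$ contains a derangement. The paper packages that last step as Corollary \ref{irred_density_cor} on Dirichlet densities of $\calS_F$, while you unwind it by hand; also, your extra enlargement of $I(n)$ to exclude $c$ with a root of period $<n$ is harmless but unnecessary, since Proposition \ref{dynatomic_properties}(1) already guarantees that any period-$n$ point is a root of $\Phi_{n,c}$ for every $c$.
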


Stated briefly, the central idea of this article is to consider a strong form of the local-global principle $\calE(f,n)$, namely the statement \ref{lgp} defined in \S\ref{lgp_failure_section}, which is a statement about a certain \textit{dynatomic polynomial}. Using the Chebotarev Density Theorem together with some observations on the structure of the Galois group of this dynatomic polynomial, we deduce a series of conditions that would necessarily hold if $\calE^{\ast}(f,n)$ were false. These conditions are then studied by explicit computations with symmetric groups. 

This article is organized as follows: the necessary background material on density and dynatomic polynomials is reviewed in \S\ref{background_section}; the main tools for exploring the failure of $\calE^{\ast}(f,n)$ are developed in \S\ref{lgp_failure_section} and applied to the case of quadratic polynomials in \S\ref{quadratic_section}. Finally, we give a brief discussion of Conjecture \ref{main_conj} in \S\ref{strong_poonen_section}.

\section{Preliminaries on Dirichlet density and dynatomic polynomials}\label{background_section}

\subsection{Dirichlet density}\label{density_section} Let $K$ be a number field and let $\mathcal M_K$ denote the set of all primes of $K$. For any prime $\p\in\mathcal M_K$ we denote by $N(\p)$ the norm of $\p$. If $S$ is a subset of $\mathcal M_K$, the \textit{Dirichlet density} of $S$ is defined to be
\[\delta(S)=\lim_{s\to1^+}\frac{\sum_{\p\in S}N(\p)^{-s}}{-\log(s-1)},\]

provided the limit exists. The following properties of the Dirichlet density function will be implicitly used throughout this article; for proofs of these statements we refer to \cite[\S8.B]{cox}. 

Let $S$ and $T$ be subsets of $\mathcal M_K$. Then the following hold.
\begin{itemize}[leftmargin=5mm]
\item $\delta(\mathcal M_K)=1$.
\item If $\delta(S)$ exists, then $0\le\delta(S)\le 1$. 
\item If $S$ is finite, then $\delta(S)=0$.
\item If $\delta(T)$ exists and $S$ differs from $T$ by only finitely many elements, then $\delta(S)$ exists and is equal to $\delta(T)$.
\item If $S$ and $T$ are disjoint and $\delta(S)$ and $\delta(T)$ exist, then $\delta(S\cup T)$ exists and is equal to $\delta(S)+\delta(T)$.
\end{itemize}

For any polynomial $F\in K[x]$ we define
\begin{equation}\label{root_prime_def}
\calS_F=\{\p\in\mathcal M_K\;\vert \;F \text{\;has\;a\;root\;in\;} K_{\p}\}.
\end{equation}

It will be important for our purposes to have a way of determining the density of any set of the form $\calS_F$. The following result, obtained by generalizing an argument of Berend-Bilu \cite{berend-bilu}, provides a way of doing this.

\begin{thm}\label{bb_density_thm}
Let $F\in K[x]$ be a nonconstant polynomial. Let $L$ be a splitting field for $F$, and set $G=\Gal(L/K)$. Let $F_1,\ldots, F_s$ be irreducible polynomials in $K[x]$ such that $F=F_1\cdots F_s$. For $1\le i\le s$, let $\theta_i\in L$ be a root of $F_i$ and set $H_i=\Gal(L/K(\theta_i))$. Finally, let $U=\bigcup_{i=1}^s H_i$. The Dirichlet density of the set $\calS_F$ exists and is given by the formula
\begin{equation}\label{density_formula}
\delta(\calS_F)=\frac{\left|\bigcup_{\sigma\in G}\sigma U\sigma^{-1}\right|}{|G|}.
\end{equation}
\end{thm}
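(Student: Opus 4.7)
The plan is to reduce the problem to Chebotarev's density theorem by translating the condition ``$F$ has a root in $K_{\p}$'' into a condition on the Frobenius conjugacy class at $\p$.

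First I would decompose the problem along the factorization. Since $F$ has a root in $K_{\p}$ if and only if some $F_i$ does, we have $\calS_F=\bigcup_{i=1}^s \calS_{F_i}$, so it suffices to understand $\calS_{F_i}$ for each $i$ simultaneously.

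Next, let $T\subset\mathcal M_K$ be the finite (hence density-zero) set of primes that ramify in $L$ or divide the leading coefficient of $F$. For $\p\notin T$, choose a prime $\P$ of $L$ above $\p$; then the decomposition group $D_{\P}=\Gal(L_{\P}/K_{\p})$ is cyclic, generated by the Frobenius $\mathrm{Frob}_{\P}$, and the conjugacy class of $\mathrm{Frob}_{\P}$ in $G$ is independent of the choice of $\P$, giving the Frobenius class $\mathrm{Frob}_{\p}$. The key local-to-global observation is: $F_i$ has a root in $K_{\p}$ if and only if some Galois conjugate $\sigma(\theta_i)\in L$ lies in $K_{\p}$, which (by the fixed-field characterization of $K_{\p}\cap L=L^{D_{\P}}$) is equivalent to $D_{\P}\subseteq \Stab_G(\sigma(\theta_i))=\sigma H_i \sigma^{-1}$ for some $\sigma\in G$. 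Since $D_{\P}$ is cyclic generated by $\mathrm{Frob}_{\P}$, this is equivalent to $\mathrm{Frob}_{\P}\in\bigcup_{\sigma\in G}\sigma H_i\sigma^{-1}$. Taking the union over $i$ gives the clean statement: for $\p\notin T$, we have $\p\in\calS_F$ if and only if $\mathrm{Frob}_{\p}$ meets $U$, equivalently $\mathrm{Frob}_{\p}\subseteq V:=\bigcup_{\sigma\in G}\sigma U\sigma^{-1}$.

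Finally, since $V$ is by construction a union of conjugacy classes in $G$, the Chebotarev density theorem applies and yields
\[
\delta(\calS_F\setminus T)=\frac{|V|}{|G|}.
\]
Removing the finite set $T$ does not affect density, so $\delta(\calS_F)$ exists and equals $|V|/|G|$, which is the desired formula.

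The only delicate step is the local-global translation in the middle paragraph, i.e., the claim that $F_i$ has a root in $K_{\p}$ iff $\mathrm{Frob}_{\P}\in\sigma H_i\sigma^{-1}$ for some $\sigma$. The main subtlety is that $H_i$ is generally not normal, which is precisely why the conjugates $\sigma H_i\sigma^{-1}$ must appear in the formula; one must also verify that the characterization is independent of which prime $\P\mid\p$ is chosen. Once this is set up carefully using the standard theory of decomposition groups in unramified extensions, the rest of the argument is a direct application of Chebotarev.
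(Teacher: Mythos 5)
Your proof is correct and follows essentially the same route as the paper: exclude the finitely many ramified primes, translate ``$F$ has a root in $K_{\p}$'' into the condition that the Frobenius conjugacy class at $\p$ meets $U$, and then apply Chebotarev. The only cosmetic difference is that you phrase the translation directly via the fixed field $L^{D_{\P}}$ inside $L$, whereas the paper routes it through the intermediate fields $K(\theta_i)$ and residue degrees (its Lemmas \ref{root_in_completion} and \ref{frob_in_galois}); these are two presentations of the same argument.
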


We now establish a few preliminary results used in the proof of Theorem \ref{bb_density_thm}. Let $L/K$ be an extension of number fields and let $E$ be an intermediate field in this extension. Let $\p$ be a prime of $K$ unramified in $L$ (and therefore unramified in $E$). For any prime $\P$ of $E$ dividing $\p$, we denote by $f_{\P/\p}$ the residue degree of $\P$ over $\p$.

\begin{lem}\label{root_in_completion}
Let $m\in K[x]$ be an irreducible polynomial such that $E=K(\theta)$ for some root $\theta$ of $m$. Then $m$ has a root in $K_{\p}$ if and only if there exists a prime $\P$ of $E$ dividing $\p$ such that $f_{\P/\p}=1$.
\end{lem}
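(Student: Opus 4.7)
The plan is to reduce the lemma to the classical correspondence between primes of $E$ above $\p$ and the irreducible factors of $m$ in $K_{\p}[x]$. First, I would invoke the decomposition $E\otimes_K K_{\p} \cong \prod_{\P\mid\p} E_{\P}$, which is valid because $E/K$ is separable, where the product ranges over the primes $\P$ of $E$ dividing $\p$ and $E_{\P}$ denotes the $\P$-adic completion of $E$.

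Next, using $E\cong K[x]/(m)$, the same $K_{\p}$-algebra can be rewritten as $K_{\p}[x]/(m)$. Writing $m = m_1\cdots m_r$ as a product of distinct monic irreducible polynomials in $K_{\p}[x]$ (distinctness coming from separability of $m$ in characteristic zero) and applying the Chinese Remainder Theorem yields $\prod_{i=1}^r K_{\p}[x]/(m_i)$. Comparing the two product decompositions of $E\otimes_K K_{\p}$ produces a bijection between primes $\P\mid\p$ and irreducible factors $m_i$, under which $E_{\P}\cong K_{\p}[x]/(m_i)$, and in particular $[E_{\P}:K_{\p}]=\deg m_i$.

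Finally, I would apply the local degree identity $[E_{\P}:K_{\p}] = e_{\P/\p}\,f_{\P/\p}$. Since $\p$ is unramified in $L$ and therefore in $E$, each $e_{\P/\p}=1$, so the factor $m_i$ corresponding to $\P$ satisfies $\deg m_i = f_{\P/\p}$. It follows that $m$ has a root in $K_{\p}$ if and only if some $m_i$ has degree one, which via the bijection is equivalent to the existence of a prime $\P\mid\p$ with $f_{\P/\p}=1$.

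The argument is essentially a direct appeal to well-known structural facts in algebraic number theory, so I do not anticipate any serious obstacle; the only point that requires a brief justification is the distinctness of the $m_i$ needed to apply CRT, which is immediate from separability of $m$. I would supplement the sketch with a citation to a standard reference such as Neukirch's \emph{Algebraic Number Theory} for the tensor-product decomposition so that the short proof remains self-contained.
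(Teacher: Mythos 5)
Your proposal is correct and takes essentially the same approach as the paper: both proofs rest on the classical bijection between the primes $\P$ of $E$ above $\p$ and the irreducible factors of $m$ over $K_{\p}$, together with $[E_{\P}:K_{\p}]=f_{\P/\p}$ in the unramified case. The only difference is presentational: you derive the correspondence explicitly via the tensor-product decomposition $E\otimes_K K_{\p}\cong\prod_{\P\mid\p}E_{\P}\cong\prod_i K_{\p}[x]/(m_i)$, whereas the paper states the identification of the $E_{\P_i}$ with the $K_{\p}(\alpha_i)$ directly and cites Neukirch and Janusz for it.
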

\begin{proof}
We refer to \cite[Chap. II, \S 8]{neukirch} and \cite[Chap. II, \S 3]{janusz} for proofs of the standard results used here. Since $\p$ is unramified in $E$, for every prime $\P$ of $E$ dividing $\p$ we have $[E_{\P}:K_{\p}]=f_{\P/\p}$. Let $m=m_1\cdots m_g$ be a factorization of $m$ into irreducible polynomials in $K_{\p}[x]$. There are exactly $g$ primes of $E$ dividing $\p$, which we denote by $\P_1,\ldots, \P_g$. For $1\le i\le g$, let $\alpha_i$ be a root of $m_i$ in $\bar K_{\p}$. The completions $E_{\P_1},\ldots, E_{\P_g}$ are $K_{\p}$-isomorphic, in some order, to the fields $K_{\p}(\alpha_1),\ldots, K_{\p}(\alpha_g)$. Hence, 
\[\{\deg m_i:1\le i\le g\}=\{[K_{\p}(\alpha_i):K_{\p}]:1\le i\le g\}=\{[E_{\P_i}:K_{\p}]:1\le i\le g\}=\{f_{\P_i/\p}:1\le i\le g\}.\]
It follows that there is a factor $m_i$ of degree 1 if and only if some residue degree $f_{\P_i/\p}$ is equal to 1.
\end{proof}

Assume now that $L/K$ is Galois. Let $\P$ be a prime of $E$ dividing $\p$ and let $\calP$ be a prime of $L$ dividing $\P$. Since $L/E$ is Galois, we may consider the decomposition group $D_{\calP/\P}\le\Gal(L/E)$ and the Frobenius automorphism $\left(\frac{L/E}{\calP}\right)$, which generates the group $D_{\calP/\P}$ and has order $f_{\calP/\P}$. (See \cite[Chap. III, \S\S1-2]{janusz} for further details.)  
\begin{lem}\label{frob_in_galois}
Let $\P$ be a prime of $E$ dividing $\p$ and let $\calP$ be a prime of $L$ dividing $\P$. The Frobenius automorphism $\left(\frac{L/K}{\calP}\right)$ belongs to $\Gal(L/E)$ if and only if $f_{\P/\p}=1$.
\end{lem}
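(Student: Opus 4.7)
The plan is to exploit the standard relationship between decomposition groups and residue degrees in a tower $K\subseteq E\subseteq L$. The key identity I will use is
\[
D_{\calP/\P}=D_{\calP/\p}\cap\Gal(L/E),
\]
which holds because an automorphism of $L$ fixes $\P$ (as a prime of $E$) and lies over the identity on $E$ exactly when it lies in the decomposition group over $\p$ and simultaneously fixes $E$. Combined with the unramified hypothesis, $|D_{\calP/\p}|=f_{\calP/\p}$ and $|D_{\calP/\P}|=f_{\calP/\P}$.

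First I would rewrite the condition in the lemma: since $\Gal(L/E)$ contains the Frobenius $\sigma:=\left(\frac{L/K}{\calP}\right)$ if and only if $\sigma\in D_{\calP/\p}\cap\Gal(L/E)=D_{\calP/\P}$, the question becomes whether $\sigma\in D_{\calP/\P}$. Because $\sigma$ generates $D_{\calP/\p}$ and $D_{\calP/\P}$ is a subgroup of $D_{\calP/\p}$, this membership forces $D_{\calP/\P}=D_{\calP/\p}$, and conversely equality of the two groups obviously places $\sigma$ in the smaller one.

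Finally, I would translate the equality of decomposition groups into an equality of their orders, namely $f_{\calP/\P}=f_{\calP/\p}$, and then invoke the multiplicativity $f_{\calP/\p}=f_{\calP/\P}\cdot f_{\P/\p}$ (valid in the tower $K\subseteq E\subseteq L$) to deduce that this is equivalent to $f_{\P/\p}=1$. No step looks like a real obstacle; the only thing to be careful about is citing that the identity $D_{\calP/\P}=D_{\calP/\p}\cap\Gal(L/E)$ and the multiplicativity of residue degrees are the standard facts from \cite[Chap.~III, \S\S1--2]{janusz} already referenced above.
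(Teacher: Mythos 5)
Your argument is correct and follows essentially the same route as the paper's proof: both rest on the identity $D_{\calP/\P}=D_{\calP/\p}\cap\Gal(L/E)$, the fact that the Frobenius generates $D_{\calP/\p}$, and the translation of $D_{\calP/\P}=D_{\calP/\p}$ into $f_{\P/\p}=1$. The only cosmetic difference is that the paper passes through the index $|D_{\calP/\p}:D_{\calP/\P}|=f_{\P/\p}$ directly, while you unfold this as an equality of orders together with multiplicativity of residue degrees.
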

\begin{proof}
Let $\sigma=\left(\frac{L/K}{\calP}\right)$. From the definition of decomposition groups it follows that $D_{\calP/\P}=D_{\calP/\p}\cap\Gal(L/E)$. Since $\sigma$ generates $D_{\calP/\p}$, we have
\[\sigma\in\Gal(L/E)\iff \sigma\in D_{\calP/\P}\iff D_{\calP/\P}=D_{\calP/\p}\iff |D_{\calP/\p}:D_{\calP/\P}|=1\iff f_{\P/\p}=1.\qedhere\]
\end{proof}

We use the Artin symbol $\left[\frac{L/K}{\p}\right]$ to denote the conjugacy class of $\Gal(L/K)$ consisting of the Frobenius automorphisms $\left(\frac{L/K}{\calP}\right)$, where $\calP$ ranges over all primes of $L$ dividing $\p$.

\begin{prop}\label{bb_density_prop}
With notation as in Theorem \ref{bb_density_thm}, let $\p$ be a prime of $K$ unramified in $L$. Then $F$ has a root in $K_{\p}$ if and only if $\left[\frac{L/K}{\p}\right]\cap U\ne\emptyset$. 
\end{prop}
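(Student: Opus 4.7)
The plan is to chain together the two preceding lemmas. Write $F = F_1\cdots F_s$ into its irreducible factors; since $F$ has a root in $K_{\p}$ if and only if some $F_i$ does, it suffices to prove that for each $i$,
\[
F_i \text{ has a root in } K_{\p} \iff \left[\tfrac{L/K}{\p}\right]\cap H_i\neq\emptyset,
\]
and then take the union over $i$, using $U=\bigcup_i H_i$.

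For a fixed $i$, set $E_i = K(\theta_i)$, which is a subfield of $L$ so $\p$ is unramified in $E_i$. Since $F_i$ is the minimal polynomial of $\theta_i$ over $K$, Lemma \ref{root_in_completion} applies and yields that $F_i$ has a root in $K_{\p}$ if and only if there is a prime $\P$ of $E_i$ lying over $\p$ with $f_{\P/\p}=1$. Next, for any such $\P$, pick any prime $\calP$ of $L$ dividing $\P$; by Lemma \ref{frob_in_galois}, the condition $f_{\P/\p}=1$ is equivalent to $\bigl(\tfrac{L/K}{\calP}\bigr)\in H_i$. Thus $F_i$ has a root in $K_{\p}$ if and only if there exists some prime $\calP$ of $L$ dividing $\p$ with Frobenius in $H_i$.

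To finish I would invoke the definition of the Artin symbol: since $\left[\tfrac{L/K}{\p}\right]$ is precisely the set of Frobenius elements $\bigl(\tfrac{L/K}{\calP}\bigr)$ as $\calP$ ranges over primes of $L$ above $\p$, the existence of such a $\calP$ with Frobenius in $H_i$ is exactly the assertion that $\left[\tfrac{L/K}{\p}\right]\cap H_i\neq\emptyset$. One small bookkeeping point to check is that every prime $\calP$ of $L$ dividing $\p$ arises as dividing some (unique) prime of $E_i$ above $\p$, so the existential quantifiers in the two steps line up correctly; this is immediate from the tower $K\subseteq E_i\subseteq L$.

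I don't anticipate a genuine obstacle here---the proposition really is a direct concatenation of Lemmas \ref{root_in_completion} and \ref{frob_in_galois} together with the definition of the Artin symbol. The only subtlety is making sure one quantifies over primes $\calP\mid\p$ in $L$ (as in the Artin symbol) rather than over primes $\P\mid\p$ in some $E_i$, but this is harmless because every $\P\mid\p$ in $E_i$ has at least one $\calP\mid\P$ in $L$, and conversely every $\calP\mid\p$ in $L$ restricts to a prime of $E_i$ above $\p$.
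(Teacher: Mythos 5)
Your proof is correct and follows essentially the same route as the paper's: apply Lemma \ref{root_in_completion} to get the residue-degree-one criterion, then Lemma \ref{frob_in_galois} to translate that into a Frobenius-in-$H_i$ condition, then read off the Artin symbol. The paper splits the argument into two directions rather than stating the per-factor equivalence, but the content is identical; your bookkeeping remark about quantifying over primes of $L$ versus primes of $E_i$ is the right thing to check and is handled correctly.
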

\begin{proof}
Suppose that $F$ has a root in $K_{\p}$. Then there is an index $1\le i\le s$ such that the polynomial $F_i$ has a root in $K_{\p}$. Let $E$ be the field $K(\theta_i)$, so that $H_i=\Gal(L/E)$. It follows from Lemma \ref{root_in_completion} that there is a prime $\P$ of $E$ dividing $\p$ such that $f_{\P/\p}=1$. Let $\calP$ be a prime of $L$ dividing $\P$. Applying Lemma \ref{frob_in_galois} we obtain that $\left(\frac{L/K}{\calP}\right)\in H_i$ and therefore $\left[\frac{L/K}{\p}\right]\cap U\ne\emptyset$. This proves one direction of the proposition.

Conversely, suppose that $\left[\frac{L/K}{\p}\right]\cap U\ne\emptyset$. Then there exist a prime $\calP$ of $L$ dividing $\p$ and an index $1\le i\le s$ such that $\left(\frac{L/K}{\calP}\right)\in H_i$. Let $E=K(\theta_i)$ and let $\P$ be the prime of $E$ contained in $\calP$. By Lemma \ref{frob_in_galois} we have $f_{\P/\p}=1$. Applying Lemma \ref{root_in_completion} to the polynomial $m=F_i$ we conclude that $F_i$ has a root in $K_{\p}$. Hence, $F$ has a root in $K_{\p}$.
\end{proof}
 
\begin{proof}[Proof of Theorem \ref{bb_density_thm}]
Let $\calT_F$ be the set of primes $\p\in\mathcal M_K$ such that $\p$ is unramified in $L$ and $F$ has a root in $K_{\p}$. Since $\calS_F$ and $\calT_F$ differ by only finitely many elements, it suffices to show that the right-hand side of \eqref{density_formula} represents the density of $\calT_F$. For every conjugacy class $\calC$ of $G$, let $P_{\calC}$ be the set of primes $\p$ of $K$ that are unramified in $L$ and satisfy $\left[\frac{L/K}{\p}\right]=\calC$. By the Chebotarev Density Theorem we have $\delta(P_{\calC})=|\calC|/|G|$. (See \cite{lagarias-odlyzko} or \cite[Chap. VII, \S 13]{neukirch}.) It follows from Proposition \ref{bb_density_prop} that $\calT_F$ may be written as the disjoint union of the sets $P_{\calC}$, where $\calC$ ranges over all conjugacy classes such that $\calC\cap U\ne\emptyset$. Thus,
\[\delta(\calT_F)=\sum_{\calC\cap U\ne\emptyset}\delta(P_{\calC})=\frac{1}{|G|}\sum_{\calC\cap U\ne\emptyset}|\calC|=\frac{1}{|G|}\left|\bigcup_{\calC\cap U\ne\emptyset}\calC\right|=\frac{\left|\bigcup_{\sigma\in G}\sigma U\sigma^{-1}\right|}{|G|}.\]
This completes the proof of the theorem.
\end{proof}

\begin{cor}\label{irred_density_cor}
If $F\in K[x]$ is irreducible and $\deg(F)>1$, then $\delta(\calS_F)<1$.
\end{cor}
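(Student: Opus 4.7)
The plan is to apply Theorem~\ref{bb_density_thm} directly. Since $F$ is irreducible, the factorization $F = F_1\cdots F_s$ has $s=1$, so $U = H$ where $H = \Gal(L/K(\theta))$ for any root $\theta$ of $F$. The density formula therefore reduces the claim $\delta(\calS_F) < 1$ to the group-theoretic statement
\[
\bigcup_{\sigma \in G} \sigma H \sigma^{-1} \subsetneq G.
\]
Since $\deg F > 1$, we have $[K(\theta):K] > 1$, so $H$ is a proper subgroup of $G$.

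The desired strict containment is a classical observation (often attributed to Jordan): no finite group is the union of conjugates of a proper subgroup. I would inline the elementary counting proof. Set $n = [G:H] \geq 2$. The number of distinct conjugates of $H$ in $G$ equals $[G:N_G(H)]$, which is at most $n$ because $H \leq N_G(H)$. Moreover, every conjugate of $H$ contains the identity element. A crude union bound therefore gives
\[
\left|\bigcup_{\sigma\in G}\sigma H\sigma^{-1}\right| \leq [G:N_G(H)]\cdot(|H|-1) + 1 \leq n(|H|-1)+1 = |G|-n+1 \leq |G|-1.
\]
Combining with the density formula of Theorem~\ref{bb_density_thm} yields $\delta(\calS_F) \leq 1 - 1/|G| < 1$.

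There is no genuine obstacle here: the corollary is an immediate consequence of Theorem~\ref{bb_density_thm} together with the covering bound above. The only subtle point worth flagging is that the inclusion $H \leq N_G(H)$ is what lets one bound the number of conjugates by $n$ rather than by $|G|/|H|$ a priori in the wrong direction, and this is precisely what turns the hypothesis $\deg F > 1$ (i.e.\ $n \geq 2$) into the strict inequality $\delta(\calS_F) < 1$.
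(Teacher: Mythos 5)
Your proof is correct and follows the same route as the paper: apply Theorem \ref{bb_density_thm} with $s=1$, observe that $H=\Gal(L/K(\theta))$ is a proper subgroup of $G$, and invoke the fact that a finite group is never the union of the conjugates of a proper subgroup. The only difference is cosmetic: the paper calls this last fact a simple exercise in group theory and leaves it to the reader, while you inline the standard counting argument with the bound $\bigl|\bigcup_{\sigma}\sigma H\sigma^{-1}\bigr|\le [G:N_G(H)](|H|-1)+1\le |G|-n+1$.
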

\begin{proof}
Let $L$ be a splitting field for $F$, and set $G=\Gal(L/K)$. Let $\theta\in L$ be a root of $F$ and set $H=\Gal(L/K(\theta))$. In the notation of Theorem \ref{bb_density_thm} we have $U=H$. Since $[K(\theta):K]=\deg(F)>1$, the field $K(\theta)$ is strictly larger than $K$, so $H$ is a proper subgroup of $G$. It is a simple exercise in group theory to show that a finite group cannot be equal to the union of the conjugates of a proper subgroup. Hence, the union of all conjugates of $H$ is strictly smaller than $G$. The result now follows from the formula \eqref{density_formula}.
\end{proof}

\begin{cor}\label{normal_poly_cor}
Let $F\in K[x]$ be irreducible, and suppose that the degree of the splitting field of $F$ over $K$ is equal to the degree of $F$. Then $\delta(\calS_F)=1/\deg(F)$.
\end{cor}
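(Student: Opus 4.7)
The plan is to apply Theorem \ref{bb_density_thm} directly, with the key observation being that the hypothesis forces the subgroup $H$ associated to a root of $F$ to be trivial.

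First I would set up the notation as in Theorem \ref{bb_density_thm}. Since $F$ is irreducible, there is only one factor, so $s=1$, $F_1 = F$, $\theta_1 = \theta$ is a root of $F$ in the splitting field $L$, and $H_1 = \Gal(L/K(\theta))$. Thus $U = H_1$.

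Next I would exploit the degree hypothesis. We have the tower $K \subseteq K(\theta) \subseteq L$ with $[K(\theta):K] = \deg(F)$ and, by assumption, $[L:K] = \deg(F)$. Multiplicativity of degrees then forces $[L:K(\theta)] = 1$, i.e. $L = K(\theta)$, so $H = \Gal(L/K(\theta))$ is the trivial subgroup $\{e\}$.

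Consequently $U = \{e\}$, and every conjugate $\sigma U \sigma^{-1}$ is also $\{e\}$, so $\bigcup_{\sigma \in G} \sigma U \sigma^{-1} = \{e\}$ has cardinality $1$. Plugging into the density formula \eqref{density_formula} gives
\[
\delta(\calS_F) = \frac{1}{|G|} = \frac{1}{[L:K]} = \frac{1}{\deg(F)},
\]
which is the claim. There is no real obstacle here; the corollary is essentially a direct translation of the hypothesis ($F$ is its own Galois closure, up to degree) into the trivial-subgroup case of Theorem \ref{bb_density_thm}.
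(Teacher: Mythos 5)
Your proof is correct and follows exactly the same route as the paper's: the paper simply notes that under the hypothesis $U=\{1\}$ and $|G|=\deg(F)$, and invokes the density formula. You have filled in the details (irreducibility gives $s=1$, the degree count forces $L=K(\theta)$, hence $H_1$ trivial), but the argument is the same.
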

\begin{proof}
This follows from Theorem \ref{bb_density_thm} by noting that in this case $U=\{1\}$ and $|G|=\deg(F)$.
\end{proof}

\begin{rem}\label{bb_code}
In certain cases, the formula \eqref{density_formula} can be used to explicitly compute the value of the density $\delta(\calS_F)$ for a given polynomial $F$. In particular, if $K=\Q$ and the degree of the splitting field $L$ is not too large, then all the data involved in this formula (most importantly the Galois group of $L/\Q$) can be obtained using computer algebra software. Code for carrying out this density computation is available in \cite{paper_code}.
\end{rem}

\subsection{Dynatomic polynomials}\label{dynatomic_section}

The study of periodic points under the action of a polynomial leads naturally to the notion of a dynatomic polynomial. In this section we recall the definition and basic properties of these polynomials; further details on this construction may be found in \cite[\S2]{morton-patel}.

Let $K$ be a field of characteristic $0$ and let $f\in K[x]$ be a nonconstant polynomial. For every positive integer $n$, the \textit{$n$-th dynatomic polynomial} of $f$ is defined by the formula
\[\Phi_{n,f}(x)=\prod_{d|n}\left(f^d(x)-x\right)^{\mu(n/d)},\]
where $\mu$ is the M\"{o}bius function. This formula \textit{a priori} defines only a rational function in $K(x)$, but in fact $\Phi_{n,f}$ is a polynomial in $K[x]$; see Theorem 2.5 in \cite{morton-patel}. Note that the degree of $\Phi_{n,f}$ is given by
\begin{equation}\label{dynatomic_degree}
\deg\Phi_{n,f}=\sum_{d|n}\mu(n/d)(\deg f)^d.
\end{equation}

\begin{prop}[Properties of $\Phi_{n,f}$]\label{dynatomic_properties} Let $\tilde K$ be a field extension of $K$ and let $\alpha\in\tilde K$.
\begin{enumerate}
\item If $\alpha$ has period $n$ under $f$, then $\Phi_{n,f}(\alpha)=0$.
\item If $\alpha$ is a root of $\Phi_{n,f}$, then $f^n(\alpha)=\alpha$.
\item If $\alpha$ is a root of $\Phi_{n,f}$, then $f(\alpha)$ is also a root of $\Phi_{n,f}$.
\item Suppose that $\Phi_{n,f}$ has nonzero discriminant. Then every root of $\Phi_{n,f}$ in $\tilde K$ has period $n$ under $f$.
\end{enumerate}
\end{prop}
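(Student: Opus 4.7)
The plan rests on the Möbius-inversion identity
\[f^n(x) - x = \prod_{d \mid n} \Phi_{d,f}(x),\]
which follows directly from the defining product for $\Phi_{n,f}$. Part (2) is then immediate, since $\Phi_{n,f}$ is a polynomial factor of $f^n(x) - x$. Part (1) combines this identity with (2): if $\alpha$ has period $n$, then $\alpha$ is a root of $f^n(x) - x$ and hence of $\Phi_{d,f}$ for some divisor $d$ of $n$; applying (2) to $\Phi_{d,f}$ yields $f^d(\alpha) = \alpha$, and the minimality of the period forces $n \mid d$, so $d = n$.

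For part (3), the strategy is to establish a polynomial identity
\[\Phi_{n,f}(f(x)) = \Phi_{n,f}(x) \cdot R(x), \qquad R(x) \in K[x],\]
from which the claim follows immediately. Letting $Q(u,v) \in K[u,v]$ be the unique polynomial with $f(u) - f(v) = (u - v)\,Q(u,v)$, substituting $f(x)$ for $x$ in the defining product and using the factorization $f(f^d(x)) - f(x) = (f^d(x) - x)\,Q(f^d(x), x)$ yields
\[\Phi_{n,f}(f(x)) = \Phi_{n,f}(x) \cdot \prod_{d \mid n} Q(f^d(x), x)^{\mu(n/d)}\]
as a rational-function identity. The key obstacle is verifying that the rightmost product is actually a polynomial, since the Möbius values $\mu(n/d) = -1$ introduce a priori denominators. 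I would resolve this by a specialization argument starting from the generic polynomial of degree $\deg f$ over a purely transcendental coefficient field: for the generic $f$, one shows that $\Phi_{n,f}$ has simple roots each of exact period $n$, so $f$ permutes those roots and the divisibility $\Phi_{n,f}(x) \mid \Phi_{n,f}(f(x))$ holds; since $\Phi_{n,f}$ is monic in $x$, polynomial long division produces $R$ with coefficients in the coefficient ring, and the identity is preserved under specialization to an arbitrary $f \in K[x]$.

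Part (4) is the most delicate. Suppose $\alpha$ is a root of $\Phi_{n,f}$ of period $m$; by (2), $m \mid n$, and I aim to contradict $m < n$. By (1), $\Phi_{m,f}(\alpha) = 0$ as well, so in $\bar K[x]$ the linear factor $(x - \alpha)$ divides both $\Phi_{m,f}$ and $\Phi_{n,f}$, and hence $(x - \alpha)^2$ divides $f^n(x) - x$. Differentiating the Möbius identity and evaluating at $\alpha$ kills every term of the product-rule expansion (each surviving product contains one of the vanishing factors $\Phi_{m,f}(\alpha)$ or $\Phi_{n,f}(\alpha)$), yielding $(f^n)'(\alpha) = 1$ and hence $\lambda^{n/m} = 1$ for $\lambda := (f^m)'(\alpha)$. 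The final step is an explicit multiplicity computation based on the formula
\[v_{\alpha}(\Phi_{n,f}) = \sum_{e \mid (n/m)} \mu\bigl((n/m)/e\bigr)\, v_{\alpha}\bigl((f^m)^e(x) - x\bigr),\]
obtained from the definition of $\Phi_{n,f}$ after restricting to divisors $d$ of $n$ divisible by $m$. Using that $\alpha$ is a fixed point of $f^m$ whose multiplier is a root of unity, the plan is to track the multiplicities $v_{\alpha}((f^m)^e - x)$ across divisors $e$ of $n/m$ and conclude $v_{\alpha}(\Phi_{n,f}) \geq 2$, contradicting the simplicity of the roots of $\Phi_{n,f}$. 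The main anticipated obstacle is this final bookkeeping --- in particular, pushing $v_\alpha(f^n - x)$ beyond the value $2$ supplied by the naive first-derivative argument when $\lambda$ is a primitive $(n/m)$-th root of unity, which seems to require a careful iterated-derivative calculation exploiting the vanishing of all the $\Phi_{d,f}(\alpha)$ for $d \in \{m, n\}$.
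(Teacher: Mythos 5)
The paper disposes of this proposition entirely by citation to Morton--Patel: part (1) is called immediate from the definition, and parts (2), (3), (4) are attributed to their Theorems 2.4(a), 3.3, and 2.4(c) respectively. Your self-contained attempt is therefore a genuinely different route. Parts (1) and (2) are sound (for (1) one can also argue directly: in the defining product evaluated at $\alpha$, the $d=n$ factor vanishes while for every proper divisor $d$ of $n$ the base $f^d(\alpha)-\alpha$ is a nonzero scalar, so the negative-exponent factors are harmless). For part (3), the gap you acknowledge --- that $R=\prod_{d\mid n}Q(f^d(x),x)^{\mu(n/d)}$ is a polynomial --- is exactly the content of the cited Theorem 3.3, and your specialization argument presupposes that the generic $\Phi_{n,f}$ is separable with all roots of exact period $n$, a fact of the same depth as part (4) itself; so (3) is a sketch resting on unproven input.

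The substantive gap is in part (4). You correctly derive $\lambda^{n/m}=1$ for $\lambda=(f^m)'(\alpha)$ and correctly write, with $g=f^m$ and $r=n/m$, the identity $v_\alpha(\Phi_{n,f})=\sum_{e\mid r}\mu(r/e)\,v_\alpha(g^e(x)-x)$. But the proposed finish via ``iterated-derivative calculations exploiting the vanishing of $\Phi_{m,f}(\alpha)$ and $\Phi_{n,f}(\alpha)$'' will not close it: with only those two factors vanishing to first order, the second derivative of $f^n(x)-x=\prod_{d\mid n}\Phi_{d,f}(x)$ at $\alpha$ need not vanish, so no contradiction is obtained. Two ideas are missing. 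First, you must rule out $\lambda$ having order $s$ strictly dividing $r$: if $s<r$, a short computation with the displayed Möbius sum (using that $v_\alpha(g^e-x)$ depends only on whether $s\mid e$) gives $v_\alpha(\Phi_{n,f})=0$, contradicting $\Phi_{n,f}(\alpha)=0$; hence $\lambda$ has exact order $r$. Second --- and this is the real engine --- the multiplicity $w:=v_\alpha(g^r(x)-x)$ must be pushed past $2$, and this is a \emph{local} fact about the parabolic fixed point, not a consequence of further differentiating the Möbius identity. Writing $g^r(x)-x=c(x-\alpha)^{\nu+1}+\cdots$ with $c\ne 0$, the commutation $g\circ g^r=g^r\circ g$ forces $\lambda^\nu=1$, so $r\mid\nu$ and $w=\nu+1\ge r+1\ge 3$. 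Combined with $v_\alpha(g^e-x)=1$ for $e\mid r$, $e<r$, the sum collapses to $v_\alpha(\Phi_{n,f})=\nu\ge r\ge 2$, which is the desired contradiction. Both steps are absent from your outline, and they are precisely what Morton--Patel's Theorem 2.4(c) supplies.
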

\begin{proof}
The first statement follows immediately from the definition of $\Phi_{n,f}$. The second statement follows from the factorization
\begin{equation}\label{dynatomic_factorization}
f^n(x)-x=\prod_{d|n}\Phi_{d,f}(x)
\end{equation}
proved in \cite[Thm. 2.4(a)]{morton-patel}. The third statement follows from the fact that $\Phi_{n,f}(x)$ divides $\Phi_{n,f}(f(x))$; see \cite[Thm. 3.3]{morton-patel}. The fourth statement is a consequence of Theorem 2.4(c) in \cite{morton-patel}, where it is shown that if $\alpha$ is a root of $\Phi_{n,f}$ having period smaller than $n$, then $\alpha$ must be a repeated root of $\Phi_{n,f}$.
\end{proof}

\begin{cor}\label{period-dynatomic_cor}
With notation as in Proposition \ref{dynatomic_properties}, suppose that $\Phi_{n,f}$ has nonzero discriminant. Then $f$ has a point of period $n$ in $\tilde K$ if and only if $\Phi_{n,f}$ has a root in $\tilde K$.
\end{cor}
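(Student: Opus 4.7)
The statement is an immediate packaging of two parts of Proposition \ref{dynatomic_properties}, so the plan is essentially to invoke that proposition twice, once in each direction, and note that the nonzero-discriminant hypothesis is exactly what is needed to make the backward direction work.

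For the forward implication, I would suppose $f$ has a point $\alpha \in \tilde K$ of period $n$, and then apply part (1) of Proposition \ref{dynatomic_properties} directly: this gives $\Phi_{n,f}(\alpha) = 0$, so $\Phi_{n,f}$ has a root in $\tilde K$. Note that this direction does not require the discriminant hypothesis at all; it holds in complete generality.

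For the backward implication, I would suppose $\alpha \in \tilde K$ is a root of $\Phi_{n,f}$. Here I would invoke part (4) of Proposition \ref{dynatomic_properties}, which says precisely that under the hypothesis $\disc(\Phi_{n,f}) \ne 0$, every root of $\Phi_{n,f}$ in an extension field has period exactly $n$ under $f$. So $\alpha$ is a period-$n$ point of $f$ in $\tilde K$, as required.

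There is no real obstacle here: the corollary is essentially a restatement. The only mild subtlety worth flagging is the role of the discriminant hypothesis. Without it, part (2) of the proposition only guarantees $f^n(\alpha) = \alpha$, which means the period of $\alpha$ divides $n$ but need not equal $n$; the nonzero discriminant rules out the degenerate case where $\alpha$ has smaller period and appears as a repeated root of $\Phi_{n,f}$, exactly as explained in the proof of part (4). So in writing up the proof I would simply combine parts (1) and (4) in one or two sentences, without reproving anything.
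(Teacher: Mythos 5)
Your proof is correct and matches the paper's own argument exactly: the paper likewise cites parts (1) and (4) of Proposition \ref{dynatomic_properties} and leaves it at that. Your extra remark on why the discriminant hypothesis is needed is accurate but not strictly necessary for the corollary itself.
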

\begin{proof}
This follows immediately from properties (1) and (4) in the proposition.
\end{proof}

Let $\bar K$ be an algebraic closure of $K$ and let $R\subset\bar K$ be the set of roots of $\Phi_{n,f}$. Proposition \ref{dynatomic_properties} implies that $f$ is a map $R\to R$ and that every element of $R$ is periodic under $f$. We mention two important consequences of these facts:

\begin{itemize}[leftmargin=3mm, itemsep=2mm]
\item The map $f$ is a bijection $R\to R$. Indeed, $f$ is surjective because for every $\alpha\in R$ we may write $\alpha=f(\beta)$ where $\beta=f^{n-1}(\alpha)\in R$. Since $R$ is a finite set, $f$ must also be injective.
\item The set $R$ can be partitioned into orbits under the action of $f$, where the orbit of an element $\alpha\in R$ is the set $\{f^k(\alpha):k\ge 0\}$. We will henceforth call these orbits the \textit{cycles} of $f$, with the understanding that they depend on the value of $n$.
\end{itemize}

We end this section by discussing the effect that linear conjugation of $f$ has on its dynatomic polynomials.
\begin{prop}\label{dynatomic_conjugate_prop}
Let $\ell(x)=ax+b\in K[x]$ be a linear polynomial, and let $g=\ell^{-1}\circ f\circ\ell$. Then
\[\Phi_{n,f}(\ell(x))=a^{\delta_{1n}}\cdot \Phi_{n,g}(x),\]
where $\delta$ is the Kronecker delta function.
\end{prop}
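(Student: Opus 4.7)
The plan is a direct calculation from the definition of the dynatomic polynomial, exploiting the fact that conjugation by $\ell$ commutes with iteration.

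First I would establish by induction on $d$ the intertwining identity
\[ f^d \circ \ell = \ell \circ g^d, \]
which is immediate from $g = \ell^{-1} \circ f \circ \ell$: if $f^{d-1}\circ\ell = \ell\circ g^{d-1}$, then $f^d\circ\ell = f\circ\ell\circ g^{d-1} = \ell\circ g\circ g^{d-1} = \ell\circ g^d$. Applied at $x$, this yields $f^d(\ell(x)) = \ell(g^d(x)) = a\,g^d(x) + b$, and therefore
\[ f^d(\ell(x)) - \ell(x) = a\,g^d(x) + b - (ax + b) = a\bigl(g^d(x) - x\bigr). \]

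Next I would substitute this into the definition of $\Phi_{n,f}$ evaluated at $\ell(x)$:
\[ \Phi_{n,f}(\ell(x)) = \prod_{d \mid n} \bigl(f^d(\ell(x)) - \ell(x)\bigr)^{\mu(n/d)} = \prod_{d \mid n} \bigl(a(g^d(x) - x)\bigr)^{\mu(n/d)} = a^{\sum_{d \mid n} \mu(n/d)} \cdot \Phi_{n,g}(x). \]
Finally, the standard Möbius identity $\sum_{d \mid n} \mu(n/d) = \sum_{e \mid n} \mu(e)$ equals $1$ when $n = 1$ and $0$ when $n > 1$, which is precisely $\delta_{1n}$, giving the stated formula.

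There is no real obstacle; the only subtle point is justifying that the computation, which is carried out formally with the rational functions $f^d(x) - x$, indeed gives an equality of polynomials (or rational functions) in $K[x]$, but this is fine since $\Phi_{n,f}$ and $\Phi_{n,g}$ are known to be polynomials by Theorem 2.5 of \cite{morton-patel}, as cited above. One small bookkeeping remark: if one prefers to avoid fractional exponents when $\mu(n/d) = -1$, the identity can instead be derived from the analogous product formula with negative terms moved to the opposite side, but the resulting exponent of $a$ is unchanged.
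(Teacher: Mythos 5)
Your proof is correct and follows essentially the same route as the paper's: both establish the key identity $f^d(\ell(x)) - \ell(x) = a\bigl(g^d(x) - x\bigr)$ and then substitute into the M\"obius product defining $\Phi_{n,f}$, reading off the exponent of $a$ as $\sum_{d\mid n}\mu(n/d) = \delta_{1n}$. You simply make explicit the induction behind that identity, which the paper leaves as ``elementary algebra.''
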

\begin{proof}
By elementary algebra one can verify the identity
\[f^d(\ell(x))-\ell(x)=a\cdot(g^d(x)-x)\] 
for every positive integer $d$. Using this relation we obtain
\begin{align*}
\Phi_{n,f}(\ell(x))&=\prod_{d|n}\left(f^d(\ell(x))-\ell(x)\right)^{\mu(n/d)}\\
&=\prod_{d|n}a^{\mu(n/d)}\left(g^d(x)-x\right)^{\mu(n/d)}\\
&=a^{\sum_{d|n}\mu(n/d)}\cdot\prod_{d|n}\left(g^d(x)-x\right)^{\mu(n/d)}\\
&=a^{\delta_{1n}}\cdot \Phi_{n,g}(x).\qedhere
\end{align*}
\end{proof}

For our purposes in later sections, an important consequence of the above proposition is that the dynatomic polynomials of linearly conjugate maps factor in the same way over every extension field of $K$.

\begin{cor}\label{dynatomic_conjugate_cor}
Let $f, g\in K[x]$ be linearly conjugate over $K$, and let $n$ be a positive integer. Let $\tilde K$ be an extension of $K$, and let
\[\Phi_{n,f}=\alpha\cdot P_1^{e_1}\cdots P_s^{e_s}\]
be a factorization of $\Phi_{n,f}$, where $\alpha\in\tilde K$; $P_1,\ldots, P_s$ are pairwise nonassociate irreducible polynomials in $\tilde K[x]$; and $e_1,\ldots, e_s$ are positive integers. Then there is a factorization
\[\Phi_{n,g}=\beta\cdot Q_1^{e_1}\cdots Q_s^{e_s},\] where $\beta\in\tilde K$; $Q_1,\ldots, Q_s\in\tilde K[x]$ are irreducible and pairwise nonassociate; and $\deg Q_i=\deg P_i$ for every $i$. 
\end{cor}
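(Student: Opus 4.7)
The plan is to apply Proposition \ref{dynatomic_conjugate_prop} directly and exploit the fact that composition with the linear polynomial $\ell$ induces a ring automorphism of $\tilde K[x]$. Since $f$ and $g$ are linearly conjugate over $K$, there is some $\ell(x) = ax + b \in K[x]$ with $a\ne 0$ such that $g = \ell^{-1} \circ f \circ \ell$. I would then consider the substitution map $\sigma_\ell : \tilde K[x] \to \tilde K[x]$ defined by $\sigma_\ell(P)(x) = P(\ell(x))$. Because $\ell$ is invertible (with linear inverse also defined over $K \subseteq \tilde K$), $\sigma_\ell$ is a $\tilde K$-algebra automorphism of $\tilde K[x]$. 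In particular, $\sigma_\ell$ preserves degree, preserves irreducibility, and sends pairwise nonassociate polynomials to pairwise nonassociate polynomials.

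Applying $\sigma_\ell$ to the given factorization of $\Phi_{n,f}$ yields
\[\Phi_{n,f}(\ell(x)) = \alpha \cdot P_1(\ell(x))^{e_1} \cdots P_s(\ell(x))^{e_s}.\]
On the other hand, Proposition \ref{dynatomic_conjugate_prop} gives $\Phi_{n,f}(\ell(x)) = a^{\delta_{1n}} \Phi_{n,g}(x)$. Setting $Q_i(x) = P_i(\ell(x))$ and $\beta = a^{-\delta_{1n}} \alpha$, I would conclude
\[\Phi_{n,g}(x) = \beta \cdot Q_1(x)^{e_1} \cdots Q_s(x)^{e_s},\]
and by the properties of $\sigma_\ell$ noted above, the $Q_i$ are irreducible, pairwise nonassociate, and satisfy $\deg Q_i = \deg P_i$ for every $i$, which is exactly the claimed factorization.

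Since the argument reduces to Proposition \ref{dynatomic_conjugate_prop} together with the elementary observation that substitution by an invertible linear polynomial is a ring automorphism of $\tilde K[x]$, there is no real obstacle. The only minor subtlety is verifying that nonassociation is preserved: if $P_i(\ell(x)) = c \cdot P_j(\ell(x))$ for some $c \in \tilde K^\times$ and $i \ne j$, then substituting $x \mapsto \ell^{-1}(x)$ gives $P_i(x) = c \cdot P_j(x)$, contradicting nonassociation of $P_i$ and $P_j$. This completes the outline.
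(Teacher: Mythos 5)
Your proof is correct and follows essentially the same approach as the paper: both invoke Proposition \ref{dynatomic_conjugate_prop} and set $Q_i(x) = P_i(\ell(x))$ with $\beta = a^{-\delta_{1n}}\alpha$. The paper leaves the automorphism properties (preservation of degree, irreducibility, nonassociation) implicit; you simply spell them out.
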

\begin{proof}
Let $\ell(x)=ax+b$ be a linear polynomial such that $g=\ell^{-1}\circ f\circ\ell$. The result follows from Proposition \ref{dynatomic_conjugate_prop} by letting $\beta=\alpha\cdot a^{-\delta_{1n}}$ and $Q_i(x)=P_i(\ell(x))$ for every $i$.
\end{proof}

\begin{cor}\label{dynatomic_conjugate_cor2}
Let $f, g$, and $n$ be as in Corollary \ref{dynatomic_conjugate_cor}. Then the following hold:
\begin{enumerate}
\item $\Phi_{n,f}$ has a root in $K$ if and only if $\Phi_{n,g}$ has a root in $K$.
\item $\Phi_{n,f}$ has discriminant $0$ if and only if $\Phi_{n,g}$ has discriminant $0$.
\item Suppose that $K$ is a number field. Then the sets $\calS_{\Phi_{n,f}}$ and $\calS_{\Phi_{n,g}}$, defined in \eqref{root_prime_def}, are equal.
\end{enumerate}
\end{cor}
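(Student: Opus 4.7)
The plan is to derive all three statements as more or less direct consequences of Corollary~\ref{dynatomic_conjugate_cor}, by applying that corollary with an appropriately chosen extension field $\tilde K$ in each case. The key observation is that the corollary preserves both the multiset of degrees $\{\deg P_i\}$ and the multiset of exponents $\{e_i\}$ appearing in a factorization of $\Phi_{n,f}$ over $\tilde K$, so any property of $\Phi_{n,f}$ that can be read off from these data must be shared by $\Phi_{n,g}$.

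For part (1), I would take $\tilde K = K$ in Corollary~\ref{dynatomic_conjugate_cor}. Then $\Phi_{n,f}$ has a root in $K$ if and only if at least one of the irreducible factors $P_i \in K[x]$ has degree $1$. Since the corollary produces irreducible factors $Q_i \in K[x]$ of $\Phi_{n,g}$ with $\deg Q_i = \deg P_i$ for every $i$, the polynomial $\Phi_{n,g}$ has a linear factor in $K[x]$ precisely when $\Phi_{n,f}$ does, hence a root in $K$ precisely when $\Phi_{n,f}$ does.

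For part (2), I would instead take $\tilde K = \bar K$ to be an algebraic closure of $K$. Over $\bar K$ each irreducible factor $P_i$ is linear, so the discriminant of $\Phi_{n,f}$ vanishes if and only if $\Phi_{n,f}$ has a repeated root in $\bar K$, which happens if and only if some $e_i \geq 2$. The factorization of $\Phi_{n,g}$ supplied by the corollary has the same exponents $e_i$, so exactly the same criterion decides whether $\Phi_{n,g}$ has discriminant $0$.

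For part (3), I would apply part (1) (or rather, the argument for part (1)) with $K$ replaced by the completion $K_\p$, noting that $f$ and $g$ remain linearly conjugate over $K_\p$ via the same linear polynomial $\ell$, so Corollary~\ref{dynatomic_conjugate_cor} applies verbatim over $K_\p$ as well. Thus $\Phi_{n,f}$ has a root in $K_\p$ if and only if $\Phi_{n,g}$ does, which is precisely the statement that $\calS_{\Phi_{n,f}} = \calS_{\Phi_{n,g}}$. There is no substantive obstacle in any of the three parts; the only thing to be careful about is selecting the right $\tilde K$ so that the desired property of $\Phi_{n,f}$ is visibly encoded in the factorization data preserved by the corollary.
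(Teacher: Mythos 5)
Your proof is correct and follows essentially the same route as the paper: each part is read off from Corollary~\ref{dynatomic_conjugate_cor} by choosing $\tilde K$ to be $K$, an algebraic closure, or $K_\p$ respectively. The only minor cosmetic difference is that for part (2) the paper works directly over $K$ (using that in characteristic $0$ irreducible factors are separable, so $\disc = 0$ iff some exponent $e_i \geq 2$), whereas you pass to $\bar K$ to make this manifest; both are equally valid.
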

\begin{proof}
Properties (1) and (2) follow from Corollary \ref{dynatomic_conjugate_cor} by considering the factorizations of $\Phi_{n,f}$ and $\Phi_{n,g}$ over $K$. Property (3) follows by considering the factorizations over $K_{\p}$ for every prime $\p$ of $K$.
\end{proof}

Having discussed all of the necessary background material, we now proceed to the main section of the article.

\section{Necessary conditions for failure of the local-global principle}\label{lgp_failure_section}

Let $K$ be a number field, $f\in K[x]$ a nonconstant polynomial, and $n$ a positive integer. We are concerned in this article with the following local-global principle:
\begin{align*}\label{lgp0}\tag*{$\calE(f,n)$}
\textit{If $f$ has a point of period $n$ in $K_{\p}$ for every prime $\p$, then $f$ has a point of period $n$ in $K$.}
\end{align*}

Our approach to studying this principle is to first consider a somewhat different statement, to which we shift attention throughout this section:
\begin{align*}\label{lgp}\tag*{$\mathcal E^{\ast}(f,n)$}
  &\textit{If $\Phi_{n,f}$ does not have a root in $K$, then $\delta(\calS_{\Phi_{n,f}})<1$.}
\end{align*}
Unwrapping the definitions, the conclusion of this statement means that the set of primes $\p$ such that $\Phi_{n,f}$ has a root in $K_{\p}$ has Dirichlet density less than 1. We begin by showing that, for the `typical' polynomial $f$, this new local-global principle is stronger than $\calE(f,n)$. 
\begin{lem}\label{stars_lem}
Suppose that $\Phi_{n,f}$ has nonzero discriminant. Then $\mathcal E^{\ast}(f,n)$ implies $\calE(f,n)$.
\end{lem}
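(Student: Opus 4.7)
The plan is to argue the contrapositive of $\calE(f,n)$ and reduce everything to a statement about roots of $\Phi_{n,f}$ via Corollary \ref{period-dynatomic_cor}. Concretely, I would assume that $f$ has no point of period $n$ in $K$ yet does have a point of period $n$ in $K_{\p}$ for every prime $\p$, and derive a contradiction with $\mathcal E^{\ast}(f,n)$.

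First I would invoke the hypothesis that $\Phi_{n,f}$ has nonzero discriminant. By Corollary \ref{period-dynatomic_cor} applied to the extension $\tilde K = K$, the failure of $f$ to have a point of period $n$ in $K$ is equivalent to $\Phi_{n,f}$ having no root in $K$. This gives the hypothesis of $\mathcal E^{\ast}(f,n)$.

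Next I would apply the same corollary with $\tilde K = K_{\p}$, for each prime $\p$ of $K$. The key point here is that the discriminant of $\Phi_{n,f}$ is an element of $K$ that is nonzero, hence nonzero when viewed in $K_{\p}$, so the hypothesis of the corollary remains valid over every completion. Therefore $f$ having a point of period $n$ in $K_{\p}$ is equivalent to $\Phi_{n,f}$ having a root in $K_{\p}$. By assumption this holds for every $\p$, so $\calS_{\Phi_{n,f}} = \mathcal M_K$ and hence $\delta(\calS_{\Phi_{n,f}}) = 1$.

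But this directly contradicts the conclusion of $\mathcal E^{\ast}(f,n)$, which asserts $\delta(\calS_{\Phi_{n,f}}) < 1$ under the hypothesis just verified. Since no genuine obstacle arises — the argument is essentially a transport of the "period $n$" formulation into the "root of $\Phi_{n,f}$" formulation, where the nonzero discriminant hypothesis keeps the equivalence valid both globally and locally — the only thing to take care of in the writeup is to cite Corollary \ref{period-dynatomic_cor} uniformly across completions and record the trivial remark that a nonzero element of $K$ remains nonzero in each $K_{\p}$.
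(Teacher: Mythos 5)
Your proof is correct and takes essentially the same approach as the paper: both argue by contradiction via Corollary \ref{period-dynatomic_cor}, applied over $K$ to get the hypothesis of $\mathcal E^{\ast}(f,n)$ and over each $K_{\p}$ to translate the local period-$n$ hypothesis into a statement about $\calS_{\Phi_{n,f}}$. The only cosmetic difference is the direction of the final contradiction (you derive $\delta=1$ directly, whereas the paper uses $\delta<1$ to produce a single offending prime), and your remark about the discriminant remaining nonzero in $K_{\p}$ is harmless but unnecessary since the corollary's hypothesis is a property of $\Phi_{n,f}$ over $K$ alone, independent of the extension $\tilde K$.
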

\begin{proof}
Assuming that $\mathcal E^{\ast}(f,n)$ holds and that, for every prime $\p$ of $K$, $f$ has a point of period $n$ in $K_{\p}$, we must show that $f$ has a point of period $n$ in $K$. The proof will be by contradiction; thus, we suppose that $f$ does not have a point of period $n$ in $K$. By Corollary \ref{period-dynatomic_cor}, this implies that $\Phi_{n,f}$ does not have a root in $K$. We may then apply $\mathcal E^{\ast}(f,n)$ to conclude that $\delta(\calS_{\Phi_{n,f}})<1$. In particular, there are infinitely many primes $\p$ lying outside the set $\calS_{\Phi_{n,f}}$. Fix any such prime $\p$. By definition of $\calS_{\Phi_{n,f}}$, the polynomial $\Phi_{n,f}$ does not have a root in $K_{\p}$. Hence, by Corollary \ref{period-dynatomic_cor}, $f$ does not have a point of period $n$ in $K_{\p}$. This contradicts one of the hypotheses and therefore proves the lemma.
\end{proof}

Given that $\mathcal E^{\ast}(f,n)$ typically implies $\mathcal E(f,n)$, it becomes of interest to know under what conditions the former statement might hold. Our goal in this section is to explore the consequences of assuming that $\mathcal E^{\ast}(f,n)$ does \textit{not} hold, with the intent of later proving -- by contradiction -- that $\mathcal E^{\ast}(f,n)$ \textit{does} hold under some assumptions on $n$ and $f$; that will be the general strategy for obtaining the main results of this article.
 
\textbf{Notation.} Let $\bar K$ be an algebraic closure of $K$. Let $L\subset\bar K$ be the splitting field of $\Phi_{n,f}$ and $R\subset L$ the set of roots of $\Phi_{n,f}$. Let $G=\Gal(L/K)$ be the Galois group of $\Phi_{n,f}$. Since every element $\sigma\in G$ maps $R$ to itself, the group $G$ has a natural action on the set $R$. For $\alpha\in R$ we will denote by $G_{\alpha}$ the stabilizer of $\alpha$ in $G$; equivalently,
\begin{equation}\label{G_stabilizer}
G_{\alpha}=\Gal(L/K(\alpha)).
\end{equation}
As noted following Corollary \ref{period-dynatomic_cor}, the set $R$ can be partitioned into cycles; we denote by $r$ the number of cycles in this partition.

\begin{lem}\label{stabilizer_lem}
If $\alpha, \beta\in R$ belong to the same cycle, then $K(\alpha)=K(\beta)$ and $G_{\alpha}=G_{\beta}$.
\end{lem}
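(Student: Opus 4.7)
The plan is to exploit the fact that $f$ has coefficients in $K$, so that applying $f$ (or any iterate of $f$) to an element of $R$ cannot enlarge the field it generates over $K$. Combined with the observation, recorded in the excerpt just before the lemma, that $f$ acts as a bijection on $R$, this will force the field generated by any element of a cycle to coincide with the field generated by any other element of that cycle.

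More precisely, first I would observe that for any $\alpha \in R$ and any $k \ge 0$ the element $f^k(\alpha)$ lies in $K(\alpha)$, since $f \in K[x]$ and $K(\alpha)$ is closed under polynomial operations over $K$. Now suppose $\alpha$ and $\beta$ belong to the same cycle. By the definition of a cycle, there is some $k \ge 0$ with $\beta = f^k(\alpha)$, so $\beta \in K(\alpha)$ and hence $K(\beta) \subseteq K(\alpha)$. For the reverse inclusion I would invoke the bijectivity of $f : R \to R$ noted in the discussion following Corollary \ref{period-dynatomic_cor}: this forces the cyclic action of $f$ on the finite orbit of $\alpha$ to be a permutation, so there exists $j \ge 0$ with $\alpha = f^j(\beta)$. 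By the same argument as before, $\alpha \in K(\beta)$ and therefore $K(\alpha) \subseteq K(\beta)$. Thus $K(\alpha) = K(\beta)$.

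The equality $G_\alpha = G_\beta$ is then immediate from the identification \eqref{G_stabilizer}, namely $G_\alpha = \Gal(L/K(\alpha)) = \Gal(L/K(\beta)) = G_\beta$.

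There is no real obstacle here; the lemma is a direct consequence of $f$ being defined over $K$ and acting bijectively on the finite set $R$. The only subtle point worth making explicit in the write-up is why one can also go from $\beta$ back to $\alpha$ by iterating $f$, which is exactly where the bijectivity (equivalently, the finiteness of the orbit together with the fact that every element of $R$ is periodic under $f$) is used.
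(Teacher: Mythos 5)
Your proof is correct and follows essentially the same route as the paper's: use $\beta=f^k(\alpha)$ and $\alpha=f^j(\beta)$ (the latter coming from periodicity/bijectivity of $f$ on $R$) to conclude $K(\alpha)=K(\beta)$, then apply \eqref{G_stabilizer} to deduce $G_\alpha=G_\beta$. The only difference is that you spell out the appeal to bijectivity a bit more explicitly than the paper does.
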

\begin{proof}
Since $\alpha$ and $\beta$ are in the same orbit under the action of $f$, there are nonnegative integers $k$ and $j$ such that $\beta=f^k(\alpha)$ and $\alpha=f^j(\beta)$. The first relation implies that $\beta\in K(\alpha)$, and the second that $\alpha\in K(\beta)$. Thus, $K(\alpha)=K(\beta)$. By \eqref{G_stabilizer}, this implies that $G_{\alpha}=G_{\beta}$.
\end{proof}

\begin{lem}\label{irred_factor_lem}
Let $\eta_1,\ldots,\eta_r$ be representatives of the distinct cycles in $R$. Then the set of degrees of irreducible factors of $\Phi_{n,f}$ in $K[x]$ is equal to the set of indices $\{|G: G_{\eta_i}|:1\le i\le r\}$.
\end{lem}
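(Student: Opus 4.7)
The plan is to use basic Galois theory to match irreducible factors of $\Phi_{n,f}$ with $G$-orbits on $R$, and then to use Lemma \ref{stabilizer_lem} to transfer the statement from $G$-orbit representatives to $f$-cycle representatives.

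First I would observe that, since $\Phi_{n,f}$ splits completely over $L$ and its roots all lie in the set $R$, every irreducible factor of $\Phi_{n,f}$ in $K[x]$ is (up to a nonzero scalar) the minimal polynomial over $K$ of some element of $R$. For any $\alpha\in R$, the degree of the minimal polynomial of $\alpha$ over $K$ equals $[K(\alpha):K]$, which by \eqref{G_stabilizer} and the fundamental theorem of Galois theory is $|G:G_{\alpha}|$. Thus the set of degrees of irreducible factors of $\Phi_{n,f}$ in $K[x]$ equals $\{|G:G_{\alpha}|:\alpha\in R\}$.

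Next I would use Lemma \ref{stabilizer_lem} to reduce the indexing set from all of $R$ to the cycle representatives. If $\alpha\in R$ lies in the $f$-cycle represented by $\eta_i$, then Lemma \ref{stabilizer_lem} gives $G_{\alpha}=G_{\eta_i}$, so $|G:G_{\alpha}|=|G:G_{\eta_i}|$. Conversely, each $\eta_i$ is an element of $R$ and thus contributes the value $|G:G_{\eta_i}|$ to the set of degrees. Therefore
\[\{|G:G_{\alpha}|:\alpha\in R\}=\{|G:G_{\eta_i}|:1\le i\le r\},\]
which establishes the claimed equality of sets.

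There is no real obstacle here; the argument is a direct application of the orbit–stabilizer correspondence between irreducible factors and Galois orbits of roots, combined with Lemma \ref{stabilizer_lem} to identify the stabilizer of any element of a cycle with that of its chosen representative. The only point to be careful about is that the lemma asserts equality of sets (not of multisets), so one does not need to worry about whether distinct $f$-cycles may lie in the same $G$-orbit and thus give rise to the same irreducible factor.
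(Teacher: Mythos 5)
Your proof is correct and follows essentially the same approach as the paper's: both identify degrees of irreducible factors with indices $[G:G_\alpha]$ via minimal polynomials and \eqref{G_stabilizer}, and both then invoke Lemma \ref{stabilizer_lem} to collapse the indexing set from all of $R$ to the cycle representatives $\eta_1,\ldots,\eta_r$. The only difference is organizational — you pass through the intermediate set $\{|G:G_\alpha|:\alpha\in R\}$ in one step, while the paper proves the two set inclusions directly — but this is cosmetic.
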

\begin{proof}
Let $P\in K[x]$ be an irreducible factor of $\Phi_{n,f}$. We must show that $\deg P=|G: G_{\eta_i}|$ for some $i$. Let $\alpha\in \bar K$ be a root of $P$. Since $\alpha$ is also a root of $\Phi_{n,f}$, there is some index $i$ such that $\alpha$ and $\eta_i$ belong to the same cycle in $R$. By Lemma \ref{stabilizer_lem} we have $G_{\alpha}=G_{\eta_i}$ and therefore $|G:G_{\alpha}|=|G:G_{\eta_i}|$. By \eqref{G_stabilizer}, the index $|G:G_{\alpha}|$ is equal to the degree of the extension $K(\alpha)/K$, which is the degree of $P$. Hence $\deg P=|G:G_{\eta_i}|$, as required. Now, fixing $i\in \{1,\ldots, r\}$ we have to show that the index $|G:G_{\eta_i}|$ is equal to the degree of some irreducible factor of $\Phi_{n,f}$. Let $P$ be the minimal polynomial of $\eta_i$ over $K$. Note that $P$ divides $\Phi_{n,f}$ since $\eta_i$ is a root of $\Phi_{n,f}$. Moreover, we have $\deg P=[K(\eta_i):K]=|G:G_{\eta_i}|$ by \eqref{G_stabilizer}. This completes the proof.
\end{proof}

\begin{prop}\label{density1_prop}
Suppose that $\Phi_{n,f}$ has no root in $K$ and that $\delta(\calS_{\Phi_{n,f}})=1$. Let $\eta_1,\ldots,\eta_r$ be representatives of the distinct cycles in $R$. Then the following hold:
\begin{enumerate}
\item $G=\bigcup_{i=1}^rG_{\eta_i}$.
\item For $1\le i\le r$, $G \ne G_{\eta_i}$.
\end{enumerate}
\end{prop}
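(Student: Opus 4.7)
Proof plan for Proposition \ref{density1_prop}:

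Part (2) is the easier of the two and I would dispatch it first. Since $\Phi_{n,f}$ has no root in $K$, none of the $\eta_i$ lie in $K$, so $K(\eta_i)$ is a proper extension of $K$. By the identification \eqref{G_stabilizer}, $G_{\eta_i} = \Gal(L/K(\eta_i))$ is a proper subgroup of $G = \Gal(L/K)$, which is exactly (2).

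For part (1), the strategy is to apply Theorem \ref{bb_density_thm} to $F = \Phi_{n,f}$ and translate the hypothesis $\delta(\calS_{\Phi_{n,f}}) = 1$ into set-theoretic information about $G$. Write $\Phi_{n,f} = F_1 \cdots F_s$ as a product of irreducibles in $K[x]$, fix a root $\theta_j$ of $F_j$, and set $H_j = \Gal(L/K(\theta_j)) = G_{\theta_j}$ and $U = \bigcup_j H_j$. The density formula \eqref{density_formula} together with $\delta(\calS_{\Phi_{n,f}}) = 1$ forces the equality $\bigcup_{\sigma \in G} \sigma U \sigma^{-1} = G$.

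The remaining step is to identify this union of conjugates with $\bigcup_{i=1}^r G_{\eta_i}$. For any $\sigma \in G$ and any root $\theta \in R$, a direct computation gives $\sigma G_\theta \sigma^{-1} = \Gal(L/K(\sigma\theta)) = G_{\sigma\theta}$. As $\sigma$ varies over $G$, $\sigma\theta_j$ ranges over the Galois orbit of $\theta_j$, which is precisely the set of roots of $F_j$; therefore
\[
\bigcup_{\sigma \in G} \sigma U \sigma^{-1} \;=\; \bigcup_{j=1}^s \bigcup_{\alpha : F_j(\alpha)=0} G_\alpha \;=\; \bigcup_{\alpha \in R} G_\alpha.
\]
Now Lemma \ref{stabilizer_lem} collapses the right-hand side: all elements in one cycle have the same stabilizer, so $\bigcup_{\alpha \in R} G_\alpha = \bigcup_{i=1}^r G_{\eta_i}$. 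Combining with the previous paragraph yields $G = \bigcup_{i=1}^r G_{\eta_i}$, which is (1).

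No step here looks like a real obstacle; the only slightly subtle point is the bookkeeping in the third paragraph, where one must check that letting $j$ range over irreducible factors and $\sigma$ over all of $G$ recovers every stabilizer $G_\alpha$ for $\alpha \in R$. This follows from the fact that each $\alpha \in R$ is a root of a unique $F_j$, and the Galois action is transitive on the roots of each $F_j$, so every $G_\alpha$ appears as $\sigma H_j \sigma^{-1}$ for an appropriate choice of $j$ and $\sigma$. Once this identification is made, parts (1) and (2) follow immediately.
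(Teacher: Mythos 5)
Your proof is correct and part (1) follows essentially the same route as the paper: invoke Theorem \ref{bb_density_thm} to turn $\delta(\calS_{\Phi_{n,f}})=1$ into the equality $G=\bigcup_{\sigma\in G}\sigma U\sigma^{-1}$, rewrite each conjugate $\sigma G_{\theta_j}\sigma^{-1}$ as $G_{\sigma(\theta_j)}$, and then collapse via Lemma \ref{stabilizer_lem} since every root lies in some cycle. Your version tidies the bookkeeping by observing that the double union is exactly $\bigcup_{\alpha\in R}G_\alpha$, whereas the paper only needs (and proves) the one containment $\subseteq\bigcup_i G_{\eta_i}$, but these are cosmetic differences.

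For part (2) you take a genuinely different and somewhat cleaner path. You argue directly from the Galois correspondence: since $\Phi_{n,f}$ has no root in $K$ and $\eta_i$ is a root, $\eta_i\notin K$, so $K(\eta_i)\supsetneq K$ and hence $G_{\eta_i}=\Gal(L/K(\eta_i))\subsetneq G$. The paper instead routes this through Lemma \ref{irred_factor_lem}, which identifies the indices $|G:G_{\eta_i}|$ with the degrees of the irreducible factors of $\Phi_{n,f}$, and then observes that ``no root'' means ``no degree-one factor.'' The two are equivalent in substance, and the paper's choice is presumably because Lemma \ref{irred_factor_lem} is needed anyway later (it reappears in the proof of Theorem \ref{density1_thm}(3)); but your argument is more self-contained and gets part (2) without any extra machinery.
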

\begin{proof}
Write $\Phi_{n,f}=P_1\cdots P_s$, where the polynomials $P_j\in K[x]$ are irreducible. For every index $1\le j\le s$, let $\theta_j\in R$ be a root of the polynomial $P_j$ and set $H_j=\Gal(L/K(\theta_j))$. Note that $H_j=G_{\theta_j}$ by \eqref{G_stabilizer}. Since the set $\calS_{\Phi_{n,f}}$ has density $1$, Theorem \ref{bb_density_thm} implies that
\begin{equation}\label{G_conjugate_union}
G=\bigcup_{j=1}^s\bigcup_{\sigma\in G}\sigma H_j\sigma^{-1}=\bigcup_{j=1}^s\bigcup_{\sigma\in G}\sigma G_{\theta_j}\sigma^{-1}=\bigcup_{j=1}^s\bigcup_{\sigma\in G}G_{\sigma(\theta_j)}.
\end{equation}
Fix $\sigma\in G$ and $j\in\{1,\ldots, s\}$. Since $\sigma(\theta_j)\in R$, there is some index $i\in\{1,\ldots, r\}$ such that $\sigma(\theta_j)$ belongs to the same cycle as $\eta_i$. By Lemma \ref{stabilizer_lem} we then have $G_{\sigma(\theta_j)}=G_{\eta_i}$ and in particular $G_{\sigma(\theta_j)}\subseteq\bigcup_{i=1}^rG_{\eta_i}$. Since this containment holds for every $\sigma$ and $j$, it follows from \eqref{G_conjugate_union} that $G=\bigcup_{i=1}^rG_{\eta_i}$. This proves (1).

To see (2), fix any $i\in \{1,\ldots, r\}$. By Lemma \ref{irred_factor_lem}, the index $|G:G_{\eta_i}|$ is equal to the degree of some irreducible factor of $\Phi_{n,f}$. By assumption, $\Phi_{n,f}$ has no root in $K$ and therefore no factor of degree $1$. Hence, $|G:G_{\eta_i}|\ne 1$ and therefore $G\ne G_{\eta_i}$, as claimed.
\end{proof}

Assuming that the statement $\mathcal E^{\ast}(f,n)$ does not hold, the above proposition imposes a considerable restriction on the group $G$; for instance, it implies that $G$ is a union of $r$ proper subgroups. In order to exploit this property and make it amenable to computation, we will now embed $G$ into a symmetric group.

Let $\Sym(R)$ be the symmetric group acting on the set $R$. The group $G$ acts faithfully on $R$ (since $L$ is generated over $K$ by the elements of $R$), so the permutation representation $G\to\Sym(R)$ is injective; we will henceforth identify $G$ with its image under this embedding. More concretely, we view elements of $G$ as bijections $R\to R$ by restricting them to $R$. As noted following Corollary \ref{period-dynatomic_cor}, $f$ is a bijection $R\to R$; thus, we may also regard $f$ as an element of $\Sym(R)$.

Let $N=\# R$ and let $S_N$ be the symmetric group acting on the set $\{1,2,\ldots, N\}$. By labeling the elements of $R$, or more precisely by fixing a bijection $\ell:\{1,2,\ldots, N\}\to R$, we may identify $\Sym(R)$ with $S_N$ and therefore view $G$ as a subgroup of $S_N$. In what follows we will take care to choose the labeling so that it is compatible with the action of $f$ on $R$.

We assume now that the discriminant of $\Phi_{n,f}$ is nonzero. By part (4) of Proposition \ref{dynatomic_properties}, this implies that every element of $R$ has period $n$ under $f$, so each of the $r$ cycles in $R$ contains $n$ roots of $\Phi_{n,f}$. Hence,
\[N=\#R=nr=\deg\Phi_{n,f}.\]
Let $\eta_1,\ldots,\eta_r$ be representatives of the distinct cycles in $R$. We define a map $\ell:\{1,2,\ldots, N\}\to R$ by
\[\ell(ni-j)=f^{n-j}(\eta_i)\;\;\text{for}\;\; 1\le i\le r\;\;\text{and}\;\;0\le j<n.\]
It is a simple calculation to check that $\ell$ is a well-defined bijection. The map $\ell$ can be regarded as a labeling of the elements of $R$ as in the figure below, where vertices represent the elements of $R$ and arrows represent the action of $f$.
\begin{figure}[h]
\includegraphics[scale=0.27]{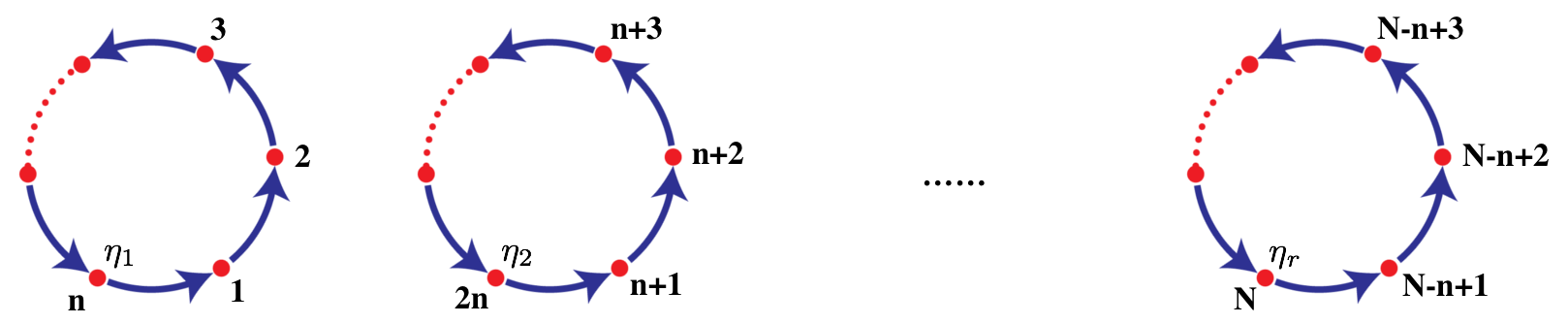}
\caption{Labeling of the roots of $\Phi_{n,f}$.}
\label{roots_labeling}
\end{figure}

Let $\iota:\Sym(R)\to S_N$ be the isomorphism defined by
\begin{equation}\label{iota_def}
\iota(p)=\ell^{-1}\circ p\circ\ell,
\end{equation} 

and let $\pi\in S_N$ be the permutation with the following cycle decomposition:
\begin{equation}\label{pi_def}
\pi=(1,2,\ldots, n)(n+1,n+2,\ldots, 2n)\cdots(N-n+1, N-n+2, \ldots, N).
\end{equation} 

\begin{lem}\label{ell_f_lem}
With notation as above, we have $f\circ\ell=\ell\circ\pi$. Hence, $\iota(f)=\pi$.
\end{lem}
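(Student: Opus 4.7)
The plan is to verify $f \circ \ell = \ell \circ \pi$ by direct computation on each label $k \in \{1,\ldots, N\}$, after which $\iota(f) = \pi$ is immediate from the definition $\iota(f) = \ell^{-1}\circ f\circ \ell$.

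First I would parameterize: every $k\in\{1,\ldots, N\}$ can be written uniquely as $k = ni - j$ with $1\le i\le r$ and $0\le j < n$. Reading off the cycle decomposition of $\pi$ in \eqref{pi_def}, one sees that each cycle is the block $\{n(i-1)+1, \ldots, ni\}$ for some $i$, cyclically shifted by one. A short index calculation (writing $ni - j = n(i-1) + (n-j)$) shows that
\[
\pi(ni-j) = \begin{cases} ni - (j-1) & \text{if } 0 < j < n, \\ ni - (n-1) & \text{if } j = 0. \end{cases}
\]

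Next I would compute both sides of $f(\ell(k)) = \ell(\pi(k))$. By definition of $\ell$,
\[
f(\ell(ni - j)) = f\bigl(f^{n-j}(\eta_i)\bigr) = f^{n-j+1}(\eta_i).
\]
When $0 < j < n$, the right-hand side is $\ell(\pi(ni-j)) = \ell(ni - (j-1)) = f^{n-(j-1)}(\eta_i) = f^{n-j+1}(\eta_i)$, so the equality holds. The only remaining case is $j=0$: here the assumption that $\Phi_{n,f}$ has nonzero discriminant enters, via part (4) of Proposition \ref{dynatomic_properties}, guaranteeing that $\eta_i$ has period exactly $n$, i.e.\ $f^n(\eta_i)=\eta_i$. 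Consequently
\[
f(\ell(ni)) = f^{n+1}(\eta_i) = f(\eta_i) = f^{1}(\eta_i) = \ell(ni-(n-1)) = \ell(\pi(ni)),
\]
closing the last case.

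Having established $f\circ \ell = \ell\circ\pi$ as maps, I would compose on the left with $\ell^{-1}$ to obtain $\ell^{-1}\circ f\circ \ell = \pi$; by the definition \eqref{iota_def} of $\iota$ this is exactly $\iota(f) = \pi$. There is no real obstacle here: the statement is essentially bookkeeping, and the only subtle point is remembering to invoke the nonzero-discriminant hypothesis in the wrap-around case $j=0$, since otherwise $\eta_i$ might have period properly dividing $n$ and the identification $f^n(\eta_i)=\eta_i$ would not be available.
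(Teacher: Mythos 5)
Your case analysis (splitting on whether $n\mid k$) and the resulting computations match the paper's proof essentially line for line, and they are correct. However, your commentary on where the nonzero-discriminant hypothesis enters is off in a way worth flagging. You write that in the wrap-around case $j=0$ the hypothesis is needed because otherwise ``$\eta_i$ might have period properly dividing $n$ and the identification $f^n(\eta_i)=\eta_i$ would not be available.'' That is backwards: $f^n(\eta_i)=\eta_i$ holds for \emph{every} root of $\Phi_{n,f}$, with no discriminant hypothesis at all --- this is part (2) of Proposition \ref{dynatomic_properties}, and if $\eta_i$ had period properly dividing $n$ the identity would hold \emph{a fortiori}, not fail. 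So the step you flag as the subtle one is in fact unconditional.

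Where $\disc\Phi_{n,f}\ne 0$ genuinely matters is upstream of the lemma, in making the map $\ell$ a well-defined bijection in the first place. The discriminant hypothesis, via part (4) of Proposition \ref{dynatomic_properties}, guarantees that every root of $\Phi_{n,f}$ has period exactly $n$, hence that each of the $r$ cycles in $R$ contains exactly $n$ distinct elements and $N=nr=\deg\Phi_{n,f}$. Without this, the formula $\ell(ni-j)=f^{n-j}(\eta_i)$ could assign the same root to several indices (if some cycle were shorter than $n$) and $\ell$ would fail to be injective, so the lemma's statement about $\iota$ would not even make sense. The paper points this out explicitly in the paragraph immediately before defining $\ell$; it is a standing hypothesis that the proof of the lemma relies on implicitly, not a step invoked inside the $j=0$ case.
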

\begin{proof}
Let $k\in\{1,2,\ldots, N\}$; we must show that $f(\ell(k))=\ell(\pi(k))$. Suppose first that $n|k$, say $k=ni$ for some $1\le i\le r$. Then $\pi(k)=k-n+1=ni-(n-1)$, so $\ell(\pi(k))=f^{n-(n-1)}(\eta_i)=f(\eta_i)$. On the other hand, $\ell(k)=\ell(ni)=f^{n}(\eta_i)=\eta_i$, so $f(\ell(k))=f(\eta_i)$. Thus, we have $\ell(\pi(k))=f(\eta_i)=f(\ell(k))$, as required. Now suppose that $n\nmid k$ and write $k=ni-j$ for some $1\le i\le r$ and $1\le j<n$. Then $f(\ell(k))=f(f^{n-j}(\eta_i))=f^{n-(j-1)}(\eta_i)$. 
Since $n\nmid k$, we have $\pi(k)=k+1=ni-(j-1)$. Given that $0\le j-1<n$, the definition of $\ell$ implies that $\ell(\pi(k))=\ell(ni-(j-1))=f^{n-(j-1)}(\eta_i)$. Therefore, $f(\ell(k))=f^{n-(j-1)}(\eta_i)=\ell(\pi(k))$. This shows that $f\circ\ell=\ell\circ\pi$. The statement that $\iota(f)=\pi$ now follows from the definition of $\iota$.
\end{proof}

\begin{lem}\label{wreath_subgp_lem}
Let $W$ be the centralizer of $\pi$ in $S_N$ and let $\calG=\iota(G)$. Then $\calG$ is a subgroup of $W$.
\end{lem}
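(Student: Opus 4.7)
The plan is to show that every element of $G$, viewed as a permutation of $R$, commutes with $f$ (viewed as a permutation of $R$), and then to push this commutation relation through the isomorphism $\iota$ to obtain commutation with $\pi$.

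The first step is the key observation: since $f\in K[x]$ and $G=\Gal(L/K)$ fixes $K$ pointwise, every $\sigma\in G$ satisfies $\sigma(f(\alpha))=f(\sigma(\alpha))$ for all $\alpha\in L$. Restricting to $\alpha\in R$, and noting that $\sigma$ permutes $R$ and $f$ maps $R$ to $R$, this says exactly that $\sigma\circ f=f\circ\sigma$ as elements of $\Sym(R)$.

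The second step is to apply $\iota$. Since $\iota$ is a group homomorphism (in fact an isomorphism $\Sym(R)\to S_N$), the relation $\sigma\circ f=f\circ\sigma$ in $\Sym(R)$ yields $\iota(\sigma)\circ\iota(f)=\iota(f)\circ\iota(\sigma)$ in $S_N$. By Lemma \ref{ell_f_lem}, $\iota(f)=\pi$, so $\iota(\sigma)$ commutes with $\pi$ and therefore belongs to $W$. Since this holds for every $\sigma\in G$, we conclude $\calG=\iota(G)\subseteq W$, and $\calG$ is a subgroup of $W$ because it is already a subgroup of $S_N$.

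There is no real obstacle here; the only subtle point is to make sure the Galois-theoretic commutation $\sigma\circ f=f\circ\sigma$ is phrased as an identity of maps $R\to R$ rather than of maps on the whole algebraic closure, which is immediate once one checks that both sides land in $R$ (guaranteed by Proposition \ref{dynatomic_properties} and the fact that $G$ permutes $R$).
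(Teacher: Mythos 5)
Your argument is correct and follows the paper's own proof essentially step for step: the key commutation $\sigma\circ f=f\circ\sigma$ on $R$ (coming from $\sigma$ fixing $K$ and $f$ having $K$-coefficients), pushed through the isomorphism $\iota$ and combined with Lemma \ref{ell_f_lem} to land in the centralizer $W$. Nothing is missing.
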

\begin{proof}
It suffices to show that $\calG$ is contained in $W$. For every $\alpha\in L$ and every $\sigma\in G$ we have $\sigma\left(f(\alpha)\right)=f\left(\sigma(\alpha)\right)$; in particular, this holds for every $\alpha\in R$. Thus, as elements of the group $\Sym(R)$, $f$ and $\sigma$ commute. It follows that $G$ is contained in the centralizer of $f$ in $\Sym(R)$. Applying the map $\iota$ we see that the group $\calG$ is contained in the centralizer of $\iota(f)$ in $S_N$, which is $W$, by Lemma \ref{ell_f_lem}. Thus, $\calG$ is contained in $W$.
\end{proof}
\begin{rem}
The above lemma is essentially a restatement of the known fact that the Galois group of $\Phi_{n,f}$ is isomorphic to a subgroup of the wreath product $(\Z/n\Z)\wr S_r$. (See, for instance, Theorem 3.56 in \cite[p. 125]{silverman_dynamics}.) Indeed, the group $W$ is a concrete realization of this wreath product as a subgroup of $S_N$, and the group $\calG$ is a subgroup of $W$ isomorphic to $G$.
\end{rem}

By combining the restrictions on $G$ provided by Proposition \ref{density1_prop} with the embedding of $G$ in $S_N$ as a subgroup of $W$, we obtain the following theorem, which is the main result of this section.

\textbf{Notation.} For any subgroup $H$ of $S_N$ and any element $k\in\{1,2,\ldots, N\}$, we denote by $\Stab_H(k)$ the stabilizer of $k$ in $H$.

\begin{thm}\label{density1_thm}
Suppose that the discriminant of $\Phi_{n,f}$ is nonzero, $\Phi_{n,f}$ has no root in $K$, and $\delta(\calS_{\Phi_{n,f}})=1$. Then the map $\iota$ defined in \eqref{iota_def} restricts to an isomorphism $\iota:G\to\calG$, where $\calG$ is a subgroup of $W$ having the following properties:
\begin{enumerate}
\item $\calG\subseteq\bigcup_{i=1}^r\Stab_W(ni)$.
\item For $1\le i\le r$, $\calG\not\subseteq\Stab_W(ni)$.
\item The set of degrees of irreducible factors of $\Phi_{n,f}$ is equal to the set $\{|\calG: \Stab_{\calG}(ni)|:1\le i\le r\}.$
\end{enumerate}
\end{thm}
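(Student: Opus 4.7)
The plan is to deduce this theorem as a translation of Proposition \ref{density1_prop} through the embedding $\iota$ constructed in Lemmas \ref{ell_f_lem} and \ref{wreath_subgp_lem}. The map $\iota:\Sym(R)\to S_N$ is an isomorphism, and since $G$ injects into $\Sym(R)$ via its faithful action on $R$, the restriction $\iota:G\to\calG$ is an isomorphism. That $\calG$ is a subgroup of $W$ is exactly the content of Lemma \ref{wreath_subgp_lem}, so the first sentence of the theorem is already in hand. All three numbered assertions will follow once I identify the image of the point stabilizer $G_{\eta_i}$ under $\iota$.

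The key observation is that $\ell(ni)=\eta_i$ for every $i\in\{1,\ldots,r\}$. Indeed, by hypothesis the discriminant of $\Phi_{n,f}$ is nonzero, so Proposition \ref{dynatomic_properties}(4) guarantees that every element of $R$ has period exactly $n$ under $f$; in particular $f^n(\eta_i)=\eta_i$. By the definition of $\ell$ we then have $\ell(ni)=f^{n-0}(\eta_i)=\eta_i$. From the definition $\iota(p)=\ell^{-1}\circ p\circ\ell$ it follows that an element $\sigma\in G$ fixes $\eta_i$ if and only if $\iota(\sigma)$ fixes $ni$, so
\[\iota(G_{\eta_i})=\Stab_{\calG}(ni)=\Stab_W(ni)\cap\calG,\]
where the second equality uses $\calG\subseteq W$.

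With this dictionary, the remaining statements are immediate. The hypotheses of Proposition \ref{density1_prop} are exactly what we have assumed, so that proposition yields $G=\bigcup_{i=1}^r G_{\eta_i}$ and $G\ne G_{\eta_i}$ for each $i$. Applying $\iota$ to the first equality gives
\[\calG=\bigcup_{i=1}^r \Stab_{\calG}(ni)\subseteq\bigcup_{i=1}^r \Stab_W(ni),\]
which is (1). Applying $\iota$ to the non-equality $G\ne G_{\eta_i}$ gives $\calG\ne\Stab_{\calG}(ni)=\Stab_W(ni)\cap\calG$, which forces $\calG\not\subseteq\Stab_W(ni)$; this is (2). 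For (3), Lemma \ref{irred_factor_lem} asserts that the degrees of the irreducible factors of $\Phi_{n,f}$ form the set $\{|G:G_{\eta_i}|:1\le i\le r\}$, and the isomorphism $\iota:G\to\calG$ carrying $G_{\eta_i}$ onto $\Stab_{\calG}(ni)$ gives $|G:G_{\eta_i}|=|\calG:\Stab_{\calG}(ni)|$, establishing (3).

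There is no serious obstacle: the argument is essentially bookkeeping, and the one point that deserves care is the identity $\ell(ni)=\eta_i$, where the nonvanishing of $\disc\Phi_{n,f}$ is used to guarantee that the period of $\eta_i$ is exactly $n$ (not a proper divisor), so that $f^n(\eta_i)=\eta_i$. Once this is in place, the three conclusions fall out of Proposition \ref{density1_prop} and Lemma \ref{irred_factor_lem} by transport of structure under $\iota$.
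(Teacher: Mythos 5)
Your proof is correct and follows essentially the same route as the paper's: identify $\ell(ni)=\eta_i$, transport Proposition \ref{density1_prop} and Lemma \ref{irred_factor_lem} through $\iota$, and invoke Lemma \ref{wreath_subgp_lem} for $\calG\le W$. One tiny remark: $f^n(\eta_i)=\eta_i$ already follows from Proposition \ref{dynatomic_properties}(2) for any root of $\Phi_{n,f}$, so you need not appeal to part (4) at that particular step (the nonvanishing discriminant is instead what makes $\ell$ a well-defined bijection, ensuring each cycle has exactly $n$ elements).
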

\begin{proof}
By Lemma \ref{wreath_subgp_lem}, the group $\calG=\iota(G)$ is a subgroup of $W$, and clearly $\iota:G\to\calG$ is an isomorphism. We will show that $\calG$ has the three stated properties. By definition of the map $\ell$, we have $\ell(ni)=\eta_i$ for $1\le i\le r$. Thus,
\begin{equation}\label{stabilizer_iso}
\iota(G_{\eta_i})=\Stab_{\calG}(\ell^{-1}(\eta_i))=\Stab_{\calG}(ni).
\end{equation}
Since there is exactly one root $\eta_i$ in each cycle in $R$, Proposition \ref{density1_prop} implies that
\[G=\bigcup_{i=1}^rG_{\eta_i}\;\;\text{and}\;\; G\ne G_{\eta_i}\;\;\text{for}\;\; 1\le i\le r.\]
Using \eqref{stabilizer_iso} we obtain
\[\calG=\iota(G)=\bigcup_{i=1}^r\Stab_{\calG}(ni)\;\; \text{and}\;\;\calG\ne\Stab_{\calG}(ni)\;\;\text{for}\;\;1\le i\le r.\] 
Thus, $\calG$ has properties (1) and (2).
Furthermore, for $1\le i\le r$ we have $|\calG: \Stab_{\calG}(ni)|=|G:G_{\eta_i}|$, so it follows from Lemma \ref{irred_factor_lem} that $\calG$ has property (3) as well.
\end{proof}

Assuming that the local-global principle $\mathcal E^{\ast}(f,n)$ fails to hold, Theorem \ref{density1_thm} can be used to obtain a finite list of groups that includes a group isomorphic to $G$. Indeed, it suffices for this purpose to find all the subgroups of $W$ having properties (1) and (2) of the theorem, and this can be done using computer algebra software. In addition to this list of groups, we can find the possible degrees of irreducible factors of $\Phi_{n,f}$ by using property (3); this fact will be essential for our analysis in \S\ref{quadratic_section}. Before further discussing this algorithmic approach, we prove a modified version of Theorem \ref{density1_thm} that will make the computations significantly more efficient: we show that instead of checking properties (1) and (2) for \textit{all} the subgroups of $W$, it suffices to consider one subgroup from each conjugacy class of subgroups.

\begin{thm}\label{density1_conj_thm}
Suppose that the discriminant of $\Phi_{n,f}$ is nonzero, $\Phi_{n,f}$ has no root in $K$, and $\delta(\calS_{\Phi_{n,f}})=1$. Let $H_1,\ldots, H_t$ be subgroups of $W$ representing all the conjugacy classes of subgroups of $W$. Then there is some index $m\in\{1,\ldots, t\}$ such that the following hold:
\begin{enumerate}
\item $H_m\subseteq\bigcup_{i=1}^r\Stab_W(ni)$.
\item For $1\le i\le r$, $H_m\not\subseteq\Stab_W(ni)$.
\item The set of degrees of irreducible factors of $\Phi_{n,f}$ is equal to the set $\{|H_m: \Stab_{H_m}(ni)|:1\le i\le r\}$.
\item There exist a bijection $u:\{1,\ldots, N\}\to R$ and an isomorphism $\rho:G\to H_m$ such that $u\circ\pi=f\circ u$ and, for every $g\in G$, $g\circ u=u\circ(\rho(g))$.
\end{enumerate}
\end{thm}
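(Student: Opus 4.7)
The plan is to leverage Theorem \ref{density1_thm} and exploit the freedom in choosing cycle representatives: by varying this choice we realize $\iota(G)$ as any prescribed $W$-conjugate of itself, and in particular as the representative $H_m$ of the appropriate $W$-conjugacy class.

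First, I would apply Theorem \ref{density1_thm} with the labeling $\ell$ to obtain $\calG = \iota(G) \subseteq W$ satisfying properties (1), (2), (3) of that theorem. Since $\calG$ is a subgroup of $W$, it lies in exactly one $W$-conjugacy class; let $H_m$ be the designated representative, and fix $w \in W$ with $H_m = w \calG w^{-1}$. I would then set $u := \ell \circ w^{-1} : \{1, \ldots, N\} \to R$ and verify that $u$ is again a labeling of the same form as $\ell$, built from the new cycle representatives $\eta_i' := u(ni)$. Specifically, I would check: (a) $u \circ \pi = f \circ u$, which follows from Lemma \ref{ell_f_lem} and the fact that $w^{-1} \in W$ centralizes $\pi$; (b) the elements $\eta_1', \ldots, \eta_r'$ form a complete set of representatives for the $f$-cycles in $R$, because $w^{-1}$ permutes the $\pi$-cycles $\{(i-1)n+1, \ldots, in\}$; and (c) $u(ni - j) = f^{n - j}(\eta_i')$ for $0 \le j < n$, which follows from (a) together with the identity $\pi^{n - j}(ni) = ni - j$.

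With $u$ in place, define $\iota'(p) := u^{-1} \circ p \circ u$; a direct substitution gives $\iota'(g) = w \iota(g) w^{-1}$ for $g \in G$, whence $\iota'(G) = H_m$. Applying Theorem \ref{density1_thm} using the labeling $u$ and the representatives $\eta_i'$ in place of $\ell$ and $\eta_i$ then delivers properties (1), (2), (3) for $H_m$; these properties refer to literal integers $ni \in \{1, \ldots, N\}$ and so are insensitive to the relabeling. For property (4), take $\rho := \iota'|_G$, which is an isomorphism $G \to H_m$: the relation $u \circ \pi = f \circ u$ was established in (a), and $g \circ u = u \circ \rho(g)$ is immediate from the definition of $\iota'$.

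The only delicate point is verifying (a)--(c), i.e., that $u$ is a legitimate labeling in the sense of the construction preceding Lemma \ref{ell_f_lem}. This hinges on the fact that elements of $W$ centralize $\pi$ and hence permute its cycles in a manner compatible with the recursive definition $\ell(ni - j) = f^{n - j}(\eta_i)$. The rest of the argument is straightforward bookkeeping.
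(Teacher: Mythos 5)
Your proof is correct and takes essentially the same approach as the paper: both set $u = \ell \circ w^{-1}$ for a conjugator $w \in W$ and exploit the fact that $w^{-1}$, centralizing $\pi$, permutes the $\pi$-cycles. The one cosmetic difference is that you verify $u$ is itself a legitimate labeling (for the representatives $\eta_i' = u(ni)$) and then re-invoke Theorem \ref{density1_thm} to get properties (1)--(3) for $H_m$ wholesale, whereas the paper re-derives these three properties directly by tracking how conjugation by $\sigma$ permutes the set $\{\Stab_W(ni)\}$; both routes rest on exactly the same observation.
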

\begin{proof}
By Theorem \ref{density1_thm}, the group $\calG=\iota(G)$ is a subgroup of $W$ having properties (1)-(3) of that theorem. The subgroup $\calG$ must be conjugate to $H_m$ for some index $m$; we claim that $H_m$ has the four properties listed above. To simplify notation, let $H=H_m$. Let $\sigma\in W$ be such that $H=\sigma\calG\sigma^{-1}$, and let $c_{\sigma}$ be the automorphism of $W$ given by $w\mapsto \sigma\circ w\circ\sigma^{-1}$. We begin by verifying property (4). Let $u=\ell\circ\sigma^{-1}$, which is a bijection $\{1,\ldots, N\}\to R$. Note that since $\sigma^{-1}\in W$, then $\sigma^{-1}$ commutes with $\pi$. In addition, by Lemma \ref{ell_f_lem} we have $\ell\circ\pi=f\circ\ell$. Thus,
\[u\circ\pi=\ell\circ\sigma^{-1}\circ\pi=\ell\circ\pi\circ\sigma^{-1}=f\circ\ell\circ\sigma^{-1}=f\circ u.\]

By definition of $\sigma$, the map $c_{\sigma}$ restricts to an isomorphism $\calG\to H$. Since the map $\iota$ is an isomorphism $G\to\calG$, defining $\rho=c_{\sigma}\circ\iota$ we obtain an isomorphism $\rho:G\to H$. For any element $g\in G$, the relation $g\circ u=u\circ(\rho(g))$ follows easily from the definitions. Thus, (4) is satisfied.

The following properties of $c_{\sigma}$ will be useful for checking that (1)-(3) hold:
\begin{itemize}
\item For every element $k\in\{1,\ldots, N\}$, $c_{\sigma}$ maps $\Stab_W(k)$ to $\Stab_W(\sigma(k))$.
\item $c_{\sigma}$ permutes the subgroups in the set $\{\Stab_W(ni):1\le i\le r\}$.
\end{itemize}
We explain the second point. It is easy to see that if $a$ and $b$ are elements of $\{1,\ldots, N\}$ belonging to the same $\pi$-orbit, then $\Stab_W(a)=\Stab_W(b)$. Now fix any index $1\le i\le r$. Since the elements $n,2n,\ldots, rn$ represent the distinct orbits of the action of $\pi$ on $\{1,\ldots, N\}$, the element $\sigma(ni)$ must belong to the same orbit as one of these; say $nj$. We then have $c_{\sigma}\left(\Stab_W(ni)\right)=\Stab_W(\sigma(ni))=\Stab_W(nj)$. This shows that the set $\{\Stab_W(ni):1\le i\le r\}$ is invariant under $c_{\sigma}$, and therefore $c_{\sigma}$ is a permutation of this set.

By Theorem \ref{density1_thm} we have $\calG\subseteq\bigcup_{i=1}^r\Stab_W(ni)$. Applying the map $c_{\sigma}$ we obtain
\[H\subseteq\bigcup_{i=1}^rc_{\sigma}\left(\Stab_W(ni)\right)=\bigcup_{i=1}^r\Stab_W(ni).\]
Thus, $H$ satisfies (1). Property (2) follows similarly from the corresponding property of $\calG$. To show that $H$ satisfies (3) we must prove that
\begin{equation}\label{conj_index}
\{|\calG: \Stab_{\calG}(ni)|:1\le i\le r\}=\{|H: \Stab_{H}(nj)|:1\le j\le r\}.
\end{equation}

Fix any index $1\le i\le r$. A simple calculation shows that $c_{\sigma}:\calG\to H$ maps $\Stab_{\calG}(ni)$ to $\Stab_H(\sigma(ni))$. There is some index $j$ such that $\sigma(ni)$ and $nj$ are in the same $\pi$-orbit, and therefore $\Stab_H(\sigma(ni))=\Stab_H(nj)$. Thus, we have
\[|\calG:\Stab_{\calG}(ni)|=|H:\Stab_H(\sigma(ni))|=|H: \Stab_{H}(nj)|.\]
This proves one containment in \eqref{conj_index}; the reverse containment follows by a similar argument.
\end{proof}

Theorem \ref{density1_conj_thm} provides a theoretical basis for an algorithm that can be used to better understand cases where the local-global principle \ref{lgp} may not hold. We state the algorithm first and then explain its precise relation to this problem.
 
\begin{alg}\label{main_algorithm}\mbox{}

Input: Positive integers $n$ and $s$.\\
Output: A list of pairs $(H, I)$, where $H$ is a group and $I$ is a set of integers.
\begin{enumerate}
\item Let $\displaystyle N=\sum_{d|n}\mu(n/d)s^d$ and $r=N/n$.
\item Construct the permutation $\pi\in S_N$ defined in \eqref{pi_def}.
\item Compute the centralizer $W$ of $\pi$. 
\item Determine subgroups $H_1,\ldots, H_t$ representing all the conjugacy classes of subgroups of $W$.
\item Create an empty list $\calP$.
\item Determine all the groups $H\in\{H_1,\ldots, H_t\}$ satisfying
\begin{itemize}
\item $H\subseteq\bigcup_{i=1}^r\Stab_W(ni)$\; and
\item For $1\le i\le r$, $H\not\subseteq\Stab_W(ni)$.
\end{itemize}
\item For every such group $H$:
\begin{enumerate}
\item Compute the set of indices $I=\{|H: \Stab_{H}(ni)|:1\le i\le r\}$.
\item Include the pair $(H, I)$ in the list $\calP$.
\end{enumerate}
\item Return the list $\calP$.
\end{enumerate}
\end{alg}

There are efficient methods for carrying out steps $3$ and $4$ of Algorithm \ref{main_algorithm}: a method for computing centralizers in permutation groups is discussed in \cite{leon}, and a method for computing representatives of the conjugacy classes of subgroups of a finite group is given in \cite{cannon-cox-holt}. In step 6, one may of course compute all the stabilizers $\Stab_W(ni)$ and check containments by brute force; however, we suggest a more efficient way of checking the two conditions in this step. To test the containment $H\subseteq\bigcup_{i=1}^r\Stab_W(ni)$, it suffices to test, for every $h\in H$, whether $h$ fixes some element $ni$. To test the containment $H\subseteq\Stab_W(ni)$, it suffices to find a set of generators of $H$ and test whether every generator fixes $ni$.

\begin{thm}\label{main_alg_thm}
Let $n$ and $s$ be positive integers. Let $K$ be a number field and $f\in K[x]$ a polynomial of degree $s$. Let $G$ be the Galois group of $\Phi_{n,f}$ and $D$ the set of degrees of irreducible factors of $\Phi_{n,f}$. Let $N=\deg\Phi_{n,f}$ and let $R$ be the set of roots of $\Phi_{n,f}$ in $\bar K$. Suppose that $\disc\Phi_{n,f}\ne 0$ and that the local-global principle \ref{lgp} fails to hold. Letting $\calP$ be the output of Algorithm \ref{main_algorithm} with input $(n,s)$, there is then a pair $(H, I)$ in $\calP$ such that $D=I$ and the following holds: there exist a bijection $u:\{1,\ldots, N\}\to R$ and an isomorphism $\rho:G\to H$ such that $u\circ\pi=f\circ u$ and, for every $g\in G$, $g\circ u=u\circ(\rho(g))$.
\end{thm}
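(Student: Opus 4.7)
The plan is to recognize Theorem \ref{main_alg_thm} as essentially a repackaging of Theorem \ref{density1_conj_thm} in the language of Algorithm \ref{main_algorithm}. The substantive mathematical content has already been established; what remains is to verify that the quantities computed by the algorithm agree with the objects appearing in Theorem \ref{density1_conj_thm}, and that the pair $(H, I)$ guaranteed by that theorem actually gets produced in the list $\calP$.

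First I would verify the numerical bookkeeping. Since $f$ has degree $s$, formula \eqref{dynatomic_degree} gives $\deg \Phi_{n,f} = \sum_{d\mid n} \mu(n/d) s^d$, which is precisely the integer $N$ computed in step (1) of the algorithm. Because $\disc \Phi_{n,f} \ne 0$, part (4) of Proposition \ref{dynatomic_properties} implies that every root of $\Phi_{n,f}$ has exact period $n$ under $f$, so each cycle in $R$ has exactly $n$ elements, and the number of cycles is $r = N/n$, again matching the algorithm. The permutation $\pi$ and its centralizer $W$ in step (2) and step (3) are defined in the same way as in the setup preceding Theorem \ref{density1_conj_thm}.

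Next I would invoke Theorem \ref{density1_conj_thm}. The assumption that \ref{lgp} fails, unwound, says that $\Phi_{n,f}$ has no root in $K$ yet $\delta(\calS_{\Phi_{n,f}}) = 1$; combined with $\disc \Phi_{n,f} \ne 0$, this is exactly the hypothesis of Theorem \ref{density1_conj_thm}. Letting $H_1,\ldots,H_t$ be conjugacy class representatives of subgroups of $W$ (as produced by step (4)), that theorem yields an index $m$ such that $H_m$ satisfies properties (1), (2), (3), and (4) there. The two conditions checked in step (6) of the algorithm are literally properties (1) and (2) of Theorem \ref{density1_conj_thm}, so $H_m$ survives the filtering in step (6), and hence in step (7) the algorithm includes the pair $(H_m, I_m)$ in $\calP$, where $I_m = \{|H_m : \Stab_{H_m}(ni)| : 1 \le i \le r\}$.

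Finally I would match the conclusions. Property (3) of Theorem \ref{density1_conj_thm} identifies the set of degrees of irreducible factors of $\Phi_{n,f}$ with $I_m$, so $D = I_m$. Property (4) of that theorem provides precisely the bijection $u : \{1,\ldots,N\} \to R$ and isomorphism $\rho : G \to H_m$ satisfying $u \circ \pi = f \circ u$ and $g \circ u = u \circ \rho(g)$ for all $g \in G$. Taking $(H, I) = (H_m, I_m)$ completes the proof.

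There is no real obstacle here: the proof is a transcription, and the one thing to be careful about is checking that the hypotheses of Theorem \ref{density1_conj_thm} (particularly the equality $N = nr$, which is not automatic without the discriminant hypothesis) are supplied correctly by the algorithm's input specification and by $\disc \Phi_{n,f} \ne 0$.
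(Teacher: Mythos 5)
Your proof is correct and follows essentially the same route as the paper's: invoke Theorem \ref{density1_conj_thm} under the translated hypotheses, then trace the guaranteed subgroup $H_m$ and its index set through steps (6) and (7) of Algorithm \ref{main_algorithm} to conclude that $(H_m, I_m)$ lands in $\calP$ with the required properties. Your explicit remark that $\disc\Phi_{n,f}\ne 0$ is what guarantees $N=nr$ (via part (4) of Proposition \ref{dynatomic_properties}) is a correct and worthwhile observation that the paper handles more implicitly by noting that the section's constructions apply.
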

\begin{proof}
Note that all of the constructions carried out and the results obtained in this section up to and including Theorem \ref{density1_conj_thm} apply in this context. We will therefore use here the notation introduced earlier in the section. In particular, the objects $r$, $\ell$, $\iota$, $\pi$, and $W$ are defined as above. The degree formula \eqref{dynatomic_degree} and the relation $N=nr$ imply that $N$ and $r$ are the numbers computed in step 1 of Algorithm \ref{main_algorithm}, and therefore $\pi$ and $W$ are the objects computed in steps 2 and 3. Let $H_1,\ldots, H_t$ be the groups computed in step 4. By Theorem \ref{density1_conj_thm}, there is a group $H\in\{H_1,\ldots, H_t\}$ satisfying conditions (1) and (2) of that theorem, and such that
\begin{equation}\label{alg_proof_iso}
D=\{|H: \Stab_{H}(ni)|:1\le i\le r\}.
\end{equation}
Furthermore, the theorem yields the existence of maps $u$ and $\rho$ with the properties stated above. We claim that the pair $(H,D)$ must belong to the output list $\calP$; this will complete the proof. The conditions (1) and (2) of Theorem \ref{density1_conj_thm} imply that the group $H$ is necessarily found in step 6 of the algorithm. In step 7(a), the set $I$ that is computed is then equal to $D$, by \eqref{alg_proof_iso}. Hence, step 7(b) guarantees that the pair $(H,D)$ is in the list $\calP$.
\end{proof}

Having at this point developed all of the core ideas of this article, we now proceed to apply them to the particular case of quadratic polynomials.

\section{Periodic points of quadratic polynomials}\label{quadratic_section}
The results of \S\ref{lgp_failure_section} apply to any nonconstant polynomial $f\in K[x]$ and any positive integer $n$. In this section we will restrict attention to the case where $f$ is a quadratic polynomial and $n\le 5$; our main goal is to study the statements \ref{lgp} and \ref{lgp0} in this case. The key elements of our approach are Algorithm \ref{main_algorithm} and Theorem \ref{main_alg_thm}. For the purposes of this paper, an implementation of the algorithm in the software system Sage \cite{sage} will be used. The source code of our implementation is available in \cite{paper_code}.

For every element $c\in K$ we define a polynomial $f_c\in K[x]$ by $f_c(x)=x^2+c$. Recall that for every quadratic polynomial $f\in K[x]$ there is a unique element $c\in K$ such that $f$ is linearly conjugate to $f_c$. To ease notation we will write $\Phi_{n,c}$ instead of $\Phi_{n,f_c}$ for the $n$-th dynatomic polynomial of $f_c$.  The polynomials $\Phi_{n,f}$ and $\Phi_{n,c}$ share several properties; in particular, by Corollaries \ref{dynatomic_conjugate_cor} and \ref{dynatomic_conjugate_cor2} we have the following.

Let $\tilde K$ be an extension of $K$. Then:
\begin{itemize}
\item $\Phi_{n,f}$ and $\Phi_{n,c}$ factor in the same way in $\tilde K[x]$.
\item $\Phi_{n,f}$ has a root in $\tilde K$ if and only if $\Phi_{n,c}$ has a root in $\tilde K$.
\item $\disc\Phi_{n,f}=0\iff\disc\Phi_{n,c}=0$.
\item The sets $\calS_{\Phi_{n,f}}$ and $\calS_{\Phi_{n,c}}$ are equal.
\end{itemize}

These basic facts will henceforth be used without explicit mention.

\subsection{The local-global principle for periods 1, 2, and 3}
\begin{thm}\label{period12_lgp} 
Let $f\in K[x]$ be a quadratic polynomial and let $n\in\{1,2\}$. Then the statements \ref{lgp} and \ref{lgp0} hold true.
\end{thm}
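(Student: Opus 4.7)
The plan is to reduce to the normal form $f_c(x) = x^2 + c$ and exploit that the relevant dynatomic polynomials are quadratic in $x$, leaving only a couple of explicit values of $c$ to handle by hand.

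By Corollary \ref{dynatomic_conjugate_cor2}, both \ref{lgp} and \ref{lgp0} depend only on the linear conjugacy class of $f$, so I may assume $f = f_c$ for some $c\in K$. A direct computation gives
\[
\Phi_{1,c}(x) = x^2 - x + c, \qquad \Phi_{2,c}(x) = x^2 + x + c + 1,
\]
with respective discriminants $1-4c$ and $-4c-3$. In particular both polynomials are of degree $2$ in $x$.

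To prove $\mathcal{E}^\ast(f_c,n)$: if $\Phi_{n,c}$ has no root in $K$, then being quadratic it must be irreducible over $K$, so Corollary \ref{irred_density_cor} yields $\delta(\calS_{\Phi_{n,c}})<1$, which is the desired conclusion. If instead $\Phi_{n,c}$ has a root in $K$, the statement $\mathcal{E}^\ast(f_c,n)$ holds vacuously. To pass to $\mathcal{E}(f_c,n)$ via Lemma \ref{stars_lem}, we need $\disc \Phi_{n,c} \ne 0$, which holds except when $(n,c)=(1,1/4)$ or $(n,c)=(2,-3/4)$; outside these two values Lemma \ref{stars_lem} immediately delivers \ref{lgp0}.

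The two remaining cases are handled directly. For $(n,c)=(1,1/4)$: $\Phi_{1,1/4}(x)=(x-1/2)^2$, and $1/2$ is indeed a fixed point of $f_{1/4}$, so $f_{1/4}$ has a point of period $1$ in $K$ and $\mathcal{E}(f_{1/4},1)$ holds. For $(n,c)=(2,-3/4)$: $\Phi_{2,-3/4}(x)=(x+1/2)^2$ has its only root at $-1/2$, which is a fixed point of $f_{-3/4}$ rather than a period-$2$ point. In fact the factorization
\[
f_{-3/4}^{\,2}(x)-x = \Phi_{1,-3/4}(x)\cdot\Phi_{2,-3/4}(x) = (x-3/2)(x+1/2)\cdot(x+1/2)^2
\]
shows that every root of $f_{-3/4}^{\,2}(x)-x$ in $\bar K$ is already a fixed point of $f_{-3/4}$. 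Hence $f_{-3/4}$ has no point of exact period $2$ in any extension of $K$, and in particular not in any $K_{\mathfrak p}$; the premise of $\mathcal{E}(f_{-3/4},2)$ is therefore unfulfillable, so the statement holds vacuously.

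There is no real obstacle here: the hard work is already packaged in Corollary \ref{irred_density_cor} and Lemma \ref{stars_lem}, and the only subtlety is the zero-discriminant edge cases, which reduce to the two explicit values $c=1/4$ and $c=-3/4$ and are dispatched by a one-line calculation each.
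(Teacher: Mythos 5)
Your proof is correct and takes essentially the same approach as the paper's: irreducibility of a rootless quadratic gives $\mathcal{E}^\ast(f,n)$, Lemma \ref{stars_lem} handles the case $\disc\Phi_{n,f}\ne 0$, and the two degenerate cases are settled by inspecting the repeated root, which is a fixed point (so for $n=1$ the conclusion holds outright, and for $n=2$ the premise is vacuous). The only cosmetic difference is that you normalize to $f_c$ and pin down the exceptional values $c=1/4$ and $c=-3/4$ explicitly, whereas the paper argues abstractly from $\Phi_{n,f}=\lambda(x-\alpha)^2$ with $f(\alpha)=\alpha$; the content is identical.
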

\begin{proof}
We begin by proving that $\mathcal E^{\ast}(f,n)$ holds. Assuming that $\Phi_{n,f}$ has no root in $K$, we must show that $\delta(\calS_{\Phi_{n,f}})<1$. By the formula \eqref{dynatomic_degree} we have $\deg\Phi_{n,f}=2$, so the assumption that $\Phi_{n,f}$ has no root implies that it is irreducible. Applying Corollary \ref{normal_poly_cor} we then obtain $\delta(\calS_{\Phi_{n,f}})=1/2<1$, as required.

Now consider the statement $\mathcal E(f,n)$. By Lemma \ref{stars_lem}, if $\disc\Phi_{n,f}\ne 0$, then we already know that $\mathcal E(f,n)$ holds. Thus, it remains only to prove that $\mathcal E(f,n)$ holds in the case where $\disc\Phi_{n,f}=0$. In that case, $\Phi_{n,f}$ is a quadratic polynomial with discriminant $0$, so we may write $\Phi_{n,f}=\lambda(x-\alpha)^2$ for some elements $\lambda, \alpha\in K$. Since $f(\alpha)$ is a root of $\Phi_{n,f}$ (by Proposition \ref{dynatomic_properties}), we must have $f(\alpha)=\alpha$.

\underline{\textit{Case 1:}} $n=1$. Since $\alpha$ is fixed by $f$, then $f$ has a point of period $1$ in $K$, and therefore $\mathcal E(f,1)$ holds.

\underline{\textit{Case 2:}} $n=2$. In this case the premise of $\mathcal E(f,2)$ cannot hold, and therefore $\mathcal E(f,2)$ is true. Indeed, suppose that $f$ has a point $\tilde\alpha$ of period $2$ in an extension of $K$. Then $\tilde\alpha$ is a root of $\Phi_{2,f}$, so $\tilde\alpha=\alpha$, and therefore $\tilde\alpha$ has period $1$; a contradiction. Hence, $f$ does not have a point of period $2$ in any extension of $K$. In particular, there is no prime $\p$ such that $f$ has a point of period $2$ in $K_{\p}$.
\end{proof}

\begin{rem} When $\disc\Phi_{n,f}\ne 0$, the conclusion of Theorem \ref{period12_lgp} can also be reached by applying Algorithm \ref{main_algorithm} and Theorem \ref{main_alg_thm}. Since that will be our strategy for $n=3,4,5$, we illustrate the method by treating the case $n=1$, which can easily be done by hand. Thus, suppose that $f\in K[x]$ is a quadratic polynomial such that  $\disc\Phi_{1,f}\ne 0$ and the statement $\mathcal E^{\ast}(f,1)$ does not hold. We may then apply Theorem \ref{main_alg_thm} with $n=1$ and $s=2$. Following the steps of Algorithm \ref{main_algorithm} with input $(1,2)$, we obtain $N=2$, $r=2$; $\pi$ is the identity permutation in $S_2$; and $W=S_2$. The conjugacy classes of subgroups of $W$ are represented by the groups $\{1\}$ and $S_2$. The stabilizers occurring in step 6 of the algorithm are both trivial, so their union is the trivial subgroup. Hence, in step 6 the two conditions on $H$ cannot both be satisfied. It follows that the output of the algorithm is empty. Now, Theorem \ref{main_alg_thm} implies, in particular, that the output of the algorithm is \textit{nonempty}, so we have reached a contradiction. Therefore, $\mathcal E^{\ast}(f,1)$ must hold.
\end{rem}

\begin{thm}\label{period3_lgp}
Let $f\in K[x]$ be a quadratic polynomial and let $n=3$. Then the statements \ref{lgp} and \ref{lgp0} hold true.
\end{thm}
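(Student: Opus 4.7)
The overall strategy mirrors the proof of Theorem \ref{period12_lgp}: establish $\calE^{\ast}(f,3)$ in the generic case using the machinery of \S\ref{lgp_failure_section}, invoke Lemma \ref{stars_lem} to deduce $\calE(f,3)$, and handle the exceptional case $\disc\Phi_{3,f}=0$ by a direct analysis.

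\textbf{Generic case.} Assume $\disc\Phi_{3,f}\ne 0$ and, aiming for a contradiction, suppose that $\calE^{\ast}(f,3)$ fails. Then Theorem \ref{main_alg_thm} applies with $(n,s)=(3,2)$. The degree formula \eqref{dynatomic_degree} gives $N=\deg\Phi_{3,f}=6$ and $r=2$; the permutation constructed in step 2 of Algorithm \ref{main_algorithm} is $\pi=(1,2,3)(4,5,6)\in S_6$; and the centralizer $W=C_{S_6}(\pi)$ has order $18$ and is isomorphic to the wreath product $C_3\wr S_2$. I would run Algorithm \ref{main_algorithm} on input $(3,2)$ via the Sage implementation of \cite{paper_code}, obtaining a finite list $\calP$ of pairs $(H,I)$. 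By Theorem \ref{main_alg_thm}, the set $D$ of degrees of irreducible factors of $\Phi_{3,f}$ over $K$ must equal $I$ for at least one such pair. The crux of the argument is to verify that $1\in I$ for \emph{every} pair $(H,I)\in\calP$. Granting this, we would conclude $1\in D$, meaning $\Phi_{3,f}$ has a linear factor and hence a root in $K$---contradicting the hypothesis of $\calE^{\ast}(f,3)$. Therefore $\calE^{\ast}(f,3)$ holds.

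\textbf{Exceptional case.} If $\disc\Phi_{3,f}=0$, then Lemma \ref{stars_lem} does not apply and I would prove $\calE(f,3)$ directly. Writing $f=f_c$ (so that Corollary \ref{dynatomic_conjugate_cor2} transfers the problem to $f_c$), compute $\disc\Phi_{3,c}$ as an explicit polynomial in $c$; its roots form a finite set, corresponding dynamically to quadratic polynomials whose fixed points have multiplier a primitive cube root of unity and therefore lie in the formal period-$3$ locus. For each $c_0\in K$ in this finite set, factor $\Phi_{3,c_0}$ explicitly and analyze the period-$3$ points of $f_{c_0}$: in each case, one either exhibits a $K$-rational point of period $3$, making the conclusion of $\calE(f_{c_0},3)$ automatic, or exhibits a prime $\p$ at which $f_{c_0}$ has no period-$3$ point in $K_{\p}$, making the hypothesis fail.

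\textbf{Main obstacle.} The subgroup enumeration in $C_3\wr S_2$ is small and entirely mechanical, so the generic case reduces to the finite check that every pair $(H,I)$ produced by Algorithm \ref{main_algorithm} satisfies $1\in I$. I have not verified this by hand, and it is conceivable that a pair $(H,I)$ with $1\notin I$ appears, in which case an additional argument---perhaps analyzing the specific group $H$ to derive an obstruction from the structure of $\Phi_{3,f}$---would be needed. The exceptional case, while requiring explicit computation for each degenerate $c_0$, is expected to be straightforward because the list is short (the discriminant vanishes on a low-degree curve in the parameter $c$).
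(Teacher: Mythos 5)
Your strategy matches the paper's overall plan: reduce to the normal form $f_c$, handle $\disc\Phi_{3,f}\ne 0$ via Algorithm \ref{main_algorithm} and Theorem \ref{main_alg_thm}, and treat the vanishing locus of $\disc\Phi_{3,c}= -(7+4c)^3(7+4c+16c^2)^2$ by explicit factorization. In the generic case the computation in fact yields $\calP=\emptyset$, so the "main obstacle" you flag never materializes: the contradiction with Theorem \ref{main_alg_thm} is immediate, and your proposed check ``$1\in I$ for every pair'' is vacuously satisfied.

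The genuine gap is in the exceptional case. First, you cannot conclude by ``exhibiting a prime $\p$'' at which the hypothesis fails; over a general number field there is no such explicit prime to write down. The correct mechanism is Corollary \ref{irred_density_cor}: after factoring $\Phi_{3,c}$, one isolates a cubic factor (call it $F$ when $c=-7/4$, or $Q$ when $7+4c+16c^2=0$) and observes that if this cubic is irreducible over $K$ then $\delta(\calS)<1$, so infinitely many primes lie outside $\calS$ without any being exhibited; conversely if it is reducible it has a root in $K$, and a resultant computation certifies that this root is not a fixed point, hence has period $3$. Second, your case analysis misses a structural subtlety: when $7+4c+16c^2=0$ the factorization $3^3\cdot 2^6\cdot\Phi_{3,c}(x)=P(2x)^3Q(2x)$ with $P$ linear shows $\Phi_{3,c}$ \emph{always} has a root in $K$, so $\mathcal E^{\ast}(f,3)$ is vacuously true, yet $\mathcal E(f,3)$ still requires separate proof because that root is a fixed point and contributes no period-$3$ point. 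Your framework of ``exhibit a point of period $3$ or exhibit a bad prime'' does not cleanly accommodate this bifurcation, whereas the density argument applied to $Q$ does.
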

\begin{proof}
Let $c\in K$ be the unique element such that $f$ is linearly conjugate to $f_c$. The proof will be divided into three cases determined by the discriminant of $\Phi_{3,c}$, which is given by
\[\disc \Phi_{3,c} = -(7+4c)^3(7+4c+16c^2)^2.\]

\underline{\textit{Case 1:}} $(7+4c)(7+4c+16c^2)\ne 0$. In this case we have $\disc\Phi_{3,f}\ne 0$. Hence, by Lemma \ref{stars_lem}, it suffices to prove that $\mathcal E^{\ast}(f,3)$ holds. The proof will be by contradiction; thus, we suppose that $\mathcal E^{\ast}(f,3)$ fails to hold. We may then apply Theorem \ref{main_alg_thm} with $n=3$ and $s=2$. Let $\calP$ be the output of Algorithm \ref{main_algorithm} with input $(3,2)$. Using our implementation of the algorithm (available in \cite{paper_code}), we obtain $\calP=\emptyset$; the steps of the algorithm leading to this result are described below. Note that this fact completes the proof in this case: indeed, Theorem \ref{main_alg_thm} states that there is a pair $(H,I)$ in $\calP$ such that the Galois group of $\Phi_{3,f}$ is isomorphic to $H$. Given that $\calP$ is in fact empty, we arrive at the desired contradiction.

In what follows we use the notation of Algorithm \ref{main_algorithm}. With input $(n,s)=(3,2)$ we obtain
\[N=6, \;r = 2,\;\; \text{and}\;\;\pi=(1,2,3)(4,5,6)\in S_6.\]
The centralizer of $\pi$ is a subgroup  $W\le S_6$ of order $18$ with the following generators:
\[W=\langle (1,2,3), (1, 6, 2, 4, 3, 5)\rangle.\]
Using Sage we find a list $H_1,\ldots, H_{9}$ of representatives of the conjugacy classes of subgroups of $W$. For each subgroup $H\in\{H_1,\ldots, H_9\}$ we then check the conditions
\[H\subseteq\Stab_W(3)\cup\Stab_W(6),\;\;\; H\not\subseteq\Stab_W(3),  \;\;\;H\not\subseteq\Stab_W(6).\]

We find that none of the groups $H_k$ have all of these properties; hence, the output list $\calP$ will necessarily be empty.

\underline{\textit{Case 2:}} $7+4c=0$. Setting $c=-7/4$ we compute the polynomial $\Phi_{3,c}$ and obtain the factorization
\[2^6\cdot\Phi_{3,c}(x)=F(2x)^2,\]
where $F(x)=x^3+x^2-9x-1$. It follows that $\calS_{\Phi_{3,c}}=\calS_F$ and in particular $\delta(\calS_{\Phi_{3,c}})=\delta(\calS_F)$. To show that $\mathcal E^{\ast}(f,3)$ holds, suppose that $\Phi_{3,f}$ (or equivalently $\Phi_{3,c}$) does not have a root in $K$. Then $F$ does not have a root in $K$, and is therefore irreducible. By Corollary \ref{irred_density_cor}, this implies that $\delta(\calS_F)<1$. Thus, we obtain $\delta(\calS_{\Phi_{3,c}})<1$ and therefore $\delta(\calS_{\Phi_{3,f}})<1$. This proves that $\mathcal E^{\ast}(f,3)$ holds. It remains to show that $\mathcal E(f,3)$ holds. Suppose that $f$ (or equivalently $f_c$) has a point of period 3 in $K_{\p}$ for every prime $\p$. This implies that $\Phi_{3,c}$, and therefore $F$, has a root in $K_{\p}$ for every prime $\p$. It follows from Corollary \ref{irred_density_cor} that $F$ is reducible in $K[x]$, so $F(2x)$ has a root $\alpha\in K$. Note that $\alpha$ is a root of $\Phi_{3,c}$, so it has period 1 or 3 under $f_c$. If the period is 1, then $\alpha$ is fixed by $f_c$, so $\alpha$ is a common root of the polynomials $\Phi_{3,c}$ and $f_c(x)-x=x^2-x-7/4$. However, we find that
\[\Res(\Phi_{3,c}(x), x^2-x-7/4)=49\ne 0,\]
so these polynomials have no common root. Hence, $\alpha$ must have period 3. This shows that $f_c$ (and therefore $f$) has a point of period 3 in $K$. Thus, we conclude that $\mathcal E(f,3)$ holds.

\underline{\textit{Case 3:}} $7+4c+16c^2=0$. In this case we have the factorization
\[3^3\cdot 2^6\cdot \Phi_{3,c}(x)=P(2x)^3Q(2x),\] where
\[P(x)=3x-4c+1\;\;\;\text{and}\;\;\; Q(x)=x^3+(4c+1)x^2+(12c-2)x-8c-15.\] Since $P(x)$ is linear, $\Phi_{3,c}$ has a root in $K$, and therefore $\mathcal E^{\ast}(f,3)$ holds. We now show that $\mathcal E(f,3)$ holds. Assuming that $f$ has a point of period 3 in $K_{\p}$ for every prime $\p$, we must show that $f$ has a point of period 3 in $K$. Fix any prime $\p$ of $K$ and let $\alpha\in K_{\p}$ have period 3 under $f_c$. Then $\alpha$ is a root of $\Phi_{3,c}$, so it must be a root of either $P(2x)$ or $Q(2x)$. Note that the polynomial $P(2x)$ has only one root in any extension of $K$, namely the root $(4c-1)/6$. Moreover, a straightforward calculation shows that this root is fixed by $f_c$. Hence, $\alpha$ cannot be a root of $P(2x)$ and must therefore be a root of $Q(2x)$. We have thus shown that $Q$ has a root in $K_{\p}$ for every prime $\p$. By Corollary \ref{irred_density_cor}, this implies that $Q$ is reducible in $K[x]$. Hence, $Q(2x)$ has a root $\beta\in K$. Note that $\beta$ is a root of $\Phi_{3,c}$, so it has period 1 or 3 under $f_c$. We compute
\[\Res(Q(2x), x^2-x+c)=-512c\ne 0,\]
so $\beta$ cannot be a root of $x^2-x+c$, and is therefore not fixed by $f_c$. Hence, the period of $\beta$ must be 3. This proves that $f$ has a point of period 3 in $K$, as required.
\end{proof}

\subsection{The local-global principle for period 4}
We turn now to consider the statement $\mathcal E^{\ast}(f,4)$.

\begin{lem}\label{period4_splitting}
Let $f\in K[x]$ be a quadratic polynomial and let $n=4$. Suppose that $\disc\Phi_{4,f}\ne 0$ and that the statement \ref{lgp} fails to hold. Then the Galois group of $\Phi_{4,f}$ is isomorphic to $\Z/2\Z\times\Z/2\Z$, and $\Phi_{4,f}$ is the product of six irreducible quadratic polynomials in $K[x]$. Moreover, for every quadratic factor $p$ of $\Phi_{4,f}$, the map $f^2$ interchanges the roots of $p$ in $\bar K$.
\end{lem}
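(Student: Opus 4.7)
The plan is to apply Theorem \ref{main_alg_thm} with $n=4$ and $s=2$, and extract the three conclusions from a careful inspection of the algorithm's output.

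First, I would instantiate Algorithm \ref{main_algorithm} with input $(4,2)$. The preliminary data are $N = 12$, $r = 3$, $\pi = (1,2,3,4)(5,6,7,8)(9,10,11,12)$, and the centralizer $W$ of $\pi$ in $S_{12}$, which is the wreath product $(\Z/4\Z) \wr S_3$ of order $384$. The computational heart of the argument is to enumerate representatives $H_1, \ldots, H_t$ of the conjugacy classes of subgroups of $W$ and filter those satisfying
\[
H \subseteq \Stab_W(4) \cup \Stab_W(8) \cup \Stab_W(12), \qquad H \not\subseteq \Stab_W(4n/n) \text{ for } n \in \{1,2,3\}.
\]
Using the Sage implementation of \cite{paper_code}, I expect to find exactly one such conjugacy class, represented (for example) by the Klein four-group
\[
H = \langle (1,3)(2,4)(5,7)(6,8),\ (1,3)(2,4)(9,11)(10,12)\rangle,
\]
with associated index set $I = \{2\}$. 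Once verified, Theorem \ref{main_alg_thm} yields immediately that $G \cong \Z/2\Z \times \Z/2\Z$ and that the set of degrees of irreducible factors of $\Phi_{4,f}$ is $\{2\}$; combined with $\deg \Phi_{4,f} = 12$, this forces the factorization into six irreducible quadratics in $K[x]$.

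For the statement about $f^2$, I would invoke part (4) of Theorem \ref{main_alg_thm} to obtain a bijection $u: \{1,\ldots,12\} \to R$ and an isomorphism $\rho: G \to H$ satisfying $f \circ u = u \circ \pi$ and $g \circ u = u \circ \rho(g)$ for all $g \in G$. Then $f^2 \circ u = u \circ \pi^2$, and the $G$-orbits in $R$ correspond via $u$ to the $H$-orbits on $\{1,\ldots,12\}$. A direct inspection of the generators above shows that the six $H$-orbits are precisely
\[
\{1,3\},\ \{2,4\},\ \{5,7\},\ \{6,8\},\ \{9,11\},\ \{10,12\},
\]
which coincide exactly with the $2$-cycles of $\pi^2 = (1,3)(2,4)(5,7)(6,8)(9,11)(10,12)$. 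Transferring through $u$, each $G$-orbit in $R$ is a $2$-cycle of $f^2$; since the roots of each irreducible quadratic factor of $\Phi_{4,f}$ form a single $G$-orbit, $f^2$ interchanges the two roots of each such factor.

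The main obstacle is the group-theoretic enumeration in step (4) of Algorithm \ref{main_algorithm}: among the many conjugacy classes of subgroups of a group of order $384$, one must confirm computationally that exactly one survives the two filtering conditions, and that this surviving class has the claimed isomorphism type $(\Z/2\Z)^2$ together with the orbit partition matching the cycle structure of $\pi^2$. This verification is mechanical but essential, as it is what simultaneously produces all three conclusions of the lemma.
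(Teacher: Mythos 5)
Your proposal is correct and takes essentially the same approach as the paper: apply Theorem \ref{main_alg_thm} with input $(4,2)$, verify computationally that the only surviving conjugacy class is the Klein four-group (your generating set $\langle(1,3)(2,4)(5,7)(6,8),\,(1,3)(2,4)(9,11)(10,12)\rangle$ defines the same group as the paper's), and read off $G\cong\Z/2\Z\times\Z/2\Z$ and $D=\{2\}$. The only cosmetic difference is in the last part: the paper checks pointwise that $h(k)\in\{k,\pi^2(k)\}$ for all $h\in H$ and all $k$, whereas you observe that the $H$-orbits coincide with the $2$-cycles of $\pi^2$ and transfer through $u$; these are two ways of phrasing the same computational fact, and both correctly yield the conclusion about $f^2$.
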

\begin{proof}
We apply Theorem \ref{main_alg_thm} with $n=4$ and $s=2$. Carrying out the steps of Algorithm \ref{main_algorithm} with input $(4,2)$, we have
\[N=12,\; r = 3,\;\;\text{and}\;\; \pi=(1,2,3,4)(5,6,7,8)(9,10,11,12)\in S_{12}.\]
The centralizer of $\pi$ is a subgroup $W\le S_{12}$ of order 384 with the following generators:
\[W=\langle(1,7,4,6,3,5,2,8)(9,10,11,12), (1,6,11,2,7,12,3,8,9,4,5,10)\rangle.\]
Using Sage we obtain a list $H_1,\ldots, H_{164}$ of representatives of the conjugacy classes of subgroups of $W$. For each subgroup $H\in\{H_1,\ldots, H_{164}\}$ we then check the conditions
\[H\subseteq \bigcup_{i=1}^3\Stab_W(4i) \;\;\text{and}\;\; H\not\subseteq\Stab_W(4i) \;\;\text{for}\;\; 1\le i\le 3.\]
We find that exactly one of the subgroups $H_m$ has these properties, namely the group
\[H= \langle(1, 3)(2, 4)(5, 7)(6, 8), (5, 7)(6, 8)(9, 11)(10, 12)\rangle\cong \Z/2\Z\times\Z/2\Z.\]
For this group we then compute the set
\[I=\{|H: \Stab_{H}(4i)|:1\le i\le 3\}=\{2\}.\]
The output of the algorithm consists of just the pair $(H,I)$. It follows from Theorem \ref{main_alg_thm} that the Galois group of $\Phi_{4,f}$ is isomorphic to $H$ and hence to $\Z/2\Z\times\Z/2\Z$. Moreover, the theorem implies that $I$ is the set of degrees of irreducible factors of $\Phi_{4,f}$. Since $I=\{2\}$, this shows that every irreducible factor of $\Phi_{4,f}$ has degree 2, and therefore $\Phi_{4,f}$ factors as the product of six irreducible quadratic polynomials. 

It remains to prove the last statement in the lemma. Let $G$ be the Galois group of $\Phi_{4,f}$ and let $R$ be the set of roots of $\Phi_{4,f}$ in $\bar K$. Let $p$ be an irreducible quadratic factor of $\Phi_{4,f}$ with roots $\alpha,\beta\in R$; we must show that $f^2(\alpha)=\beta$. By Theorem \ref{main_alg_thm}, there exist a bijection $u:\{1,\ldots, 12\}\to R$ and an isomorphism $\rho:G\to H$ such that
\begin{equation}\label{period4_bijections_property}
u\circ\pi=f\circ u\;\;\text{and}\;\;g\circ u=u\circ(\rho(g))\;\;\text{for every}\;\;g\in G.
\end{equation}

Note that the group $H$ has the following property: 
\begin{equation}\label{period4_H_property}
\text{For every}\;\; h\in H\;\;\text{and every}\;\;k\in\{1,\ldots, 12\}, \;\;h(k)\in\{k,\pi^2(k)\}.
\end{equation}

Since $\alpha$ and $\beta$ are Galois conjugates, there is an element $g\in G$ such that $\beta=g(\alpha)$. Let $h=\rho(g)$ and $k=u^{-1}(\alpha)$. By \eqref{period4_H_property}, we have either $h(k)=k$ or $h(k)=\pi^2(k)$. If $h(k)=k$, then $u(h(k))=u(k)=\alpha$, so $u\circ h\circ u^{-1}(\alpha)=\alpha$. By \eqref{period4_bijections_property} we know that $u\circ h\circ u^{-1}=g$, so $g(\alpha)=u\circ h\circ u^{-1}(\alpha)=\alpha$. This is a contradiction since $g(\alpha)=\beta\ne\alpha$. Hence, $h(k)$ cannot equal $k$, and so $h(k)=\pi^2(k)$. By similar reasoning as above, this implies that $g(\alpha)=u\circ\pi^2\circ u^{-1}(\alpha)=f^2(\alpha)$. We conclude that $\beta=f^2(\alpha)$, as required.
\end{proof}

The above lemma suggests that in order to understand cases where the statement $\mathcal E^{\ast}(f,4)$ fails, it is important to study the property of $\Phi_{4,f}$ splitting into quadratic factors each of whose roots are interchanged by $f^2$. We now aim to show that this behavior is highly exceptional.

\begin{lem}\label{period4_param}
Let $F$ be a number field and let $c\in F$. Suppose that $\alpha\in F$ has period $4$ under the map $f_c$. Then there exist $u,v\in F$ such that
\begin{equation}\label{period4_curve}
v^2=-u(u^2+1)(u^2-2u-1)
\end{equation}
and the following relations hold:
\begin{align}
\label{period4_cparameter}&c = \frac{(u^2-4u-1)(u^4+u^3+2u^2-u+1)}{4u(u^2-1)^2},\\
\label{period4_conjugates}&\alpha = \frac{u-1}{2(u+1)}+\frac{v}{2u(u-1)}\;\;,\;\;f_c^2(\alpha) = \frac{u-1}{2(u+1)}-\frac{v}{2u(u-1)}.
\end{align}
\end{lem}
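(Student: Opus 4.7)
The plan is to construct $u$ and $v$ as explicit rational functions of the four points of the cycle containing $\alpha$, and then verify the three identities by polynomial computation. Set $\alpha_i = f_c^{i-1}(\alpha)$ for $i = 1, 2, 3, 4$, so that $\alpha_1, \alpha_2, \alpha_3, \alpha_4 \in F$ are the four points of the cycle and $\alpha_{i+1} = \alpha_i^2 + c$, with indices taken modulo $4$.

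The first step is to derive the key symmetric identity $(\alpha_1 + \alpha_3)(\alpha_2 + \alpha_4) = -1$. From the recurrence,
\[\alpha_3 - \alpha_1 = \alpha_2^2 - \alpha_4^2 = (\alpha_2 - \alpha_4)(\alpha_2 + \alpha_4), \qquad \alpha_4 - \alpha_2 = (\alpha_3 - \alpha_1)(\alpha_3 + \alpha_1).\]
Multiplying these and cancelling the nonzero factor $(\alpha_3 - \alpha_1)(\alpha_4 - \alpha_2)$, which is nonzero because the orbit has exact length $4$, yields the claim.

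The second step is to define
\[u := \frac{1 + \alpha_1 + \alpha_3}{1 - \alpha_1 - \alpha_3}, \qquad v := u(u - 1)(\alpha_1 - \alpha_3),\]
both elements of $F$; the denominator $1 - \alpha_1 - \alpha_3$ is nonzero because $1 - \alpha_1 - \alpha_3 = 0$ would force $\alpha_2 + \alpha_4 = -1$, an algebraic condition that must be excluded by a short separate argument. By construction, $\alpha_1 + \alpha_3 = (u-1)/(u+1)$ and $\alpha_1 - \alpha_3 = v/(u(u-1))$, so the two expressions in \eqref{period4_conjugates} follow tautologically.

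The third step is to verify \eqref{period4_cparameter} and \eqref{period4_curve}. Combining the relation $\alpha_2 + \alpha_4 = (\alpha_1 + \alpha_3)^2 - 2\alpha_1\alpha_3 + 2c$ with $(\alpha_1+\alpha_3)(\alpha_2+\alpha_4) = -1$ solves for $\alpha_1\alpha_3$ as a rational function of $u$ and $c$. Substituting this into the closure relation obtained from $\alpha_3 = (\alpha_1^2 + c)^2 + c$, namely a polynomial identity in the symmetric functions of $\{\alpha_1, \alpha_3\}$ and $c$, produces a single polynomial relation that, after clearing denominators and factoring, yields \eqref{period4_cparameter}. The identity
\[v^2 = u^2(u-1)^2\bigl[(\alpha_1+\alpha_3)^2 - 4\alpha_1\alpha_3\bigr]\]
then reduces, on substituting the derived expression for $\alpha_1\alpha_3$ and the value of $c$ from \eqref{period4_cparameter}, to the quintic relation \eqref{period4_curve}.

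The main obstacle is purely computational: the polynomial manipulations required to simplify the relation for $c$ to the factored form displayed in \eqref{period4_cparameter} and to bring $v^2$ into the exact form in \eqref{period4_curve} are lengthy, and are most cleanly carried out with computer algebra. Conceptually, however, once $u$ and $v$ have been defined, every identity follows from routine elimination in the polynomial ring $F[\alpha_1, \alpha_3, c]$.
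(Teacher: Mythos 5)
Your route is genuinely different from the paper's: the paper disposes of this lemma with a citation to Proposition 3.4 of \cite{jxd}, while you build $u$ and $v$ directly from the symmetric functions $\alpha_1+\alpha_3$ and $\alpha_1-\alpha_3$ of the $4$-cycle and then verify the three displayed identities by elimination. The skeleton is sound: the trace identity $(\alpha_1+\alpha_3)(\alpha_2+\alpha_4)=-1$ follows exactly as you argue, the definitions of $u$ and $v$ reproduce \eqref{period4_conjugates} by construction, and eliminating $\alpha_1\alpha_3$ via the two trace relations and the product relation $\alpha_2\alpha_4=(\alpha_1^2+c)(\alpha_3^2+c)$ does yield \eqref{period4_cparameter} and then \eqref{period4_curve}. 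I have checked the final simplifications and they come out correctly. What your approach buys is a self-contained, elementary proof in place of an external reference; what it costs is the computational bookkeeping, which you acknowledge.

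There is, however, one gap you flag but do not close and a second you do not mention. You note that the definition of $u$ requires $\alpha_1+\alpha_3\ne 1$ and say a ``short separate argument'' is needed, but you do not give it: if $\alpha_1+\alpha_3=1$ then $\alpha_2+\alpha_4=-1$, and the two trace relations force $\alpha_1\alpha_3=1+c$ and $\alpha_2\alpha_4=c$, yet expanding $\alpha_2\alpha_4=(\alpha_1^2+c)(\alpha_3^2+c)$ with these values gives $\alpha_2\alpha_4=1+c$, a contradiction. More importantly, the displayed formulas contain $u$ and $u^2-1$ in denominators, and your inversion $\alpha_1-\alpha_3=v/(u(u-1))$, which you call tautological, silently requires $u\notin\{0,1\}$, equivalently $\alpha_1+\alpha_3\notin\{-1,0\}$; you never address this. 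Both exclusions do hold: $\alpha_1+\alpha_3=0$ forces $\alpha_2=\alpha_3^2+c=\alpha_1^2+c$, wait rather $\alpha_4=\alpha_3^2+c=\alpha_1^2+c=\alpha_2$, contradicting exact period $4$; and $\alpha_1+\alpha_3=-1$ gives $\alpha_2+\alpha_4=1$, which is ruled out by the argument just given with the pairs $(\alpha_1,\alpha_3)$ and $(\alpha_2,\alpha_4)$ interchanged. (The remaining condition $u\ne -1$ is automatic from $u=(1+s)/(1-s)$.) Until these three degeneracies are explicitly excluded, you have not produced $u,v\in F$ for which every equation in the lemma makes sense, so the proof is incomplete as written.
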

\begin{proof}
See  Proposition 3.4 in \cite{jxd}.
\end{proof}

\begin{lem}\label{quad_factor_lem}
Let $c\in K$. Suppose that $\disc\Phi_{4,c}\ne 0$ and that $\Phi_{4,c}$ has a monic irreducible quadratic factor $p\in K[x]$ whose roots in $\bar K$ are interchanged by the map $f_c^2$. Then there is an element $u\in K$ such that \eqref{period4_cparameter} holds and
\begin{equation}\label{quad_factor_param}
p(x)=x^2-\frac{u-1}{u+1}\cdot x+\frac{u^6+u^5-7u^4+2u^3-9u^2-3u-1}{4u(u^2-1)^2}.
\end{equation}
\end{lem}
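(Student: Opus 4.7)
Let $\alpha,\beta\in\bar K$ be the roots of $p$, so by hypothesis $\beta=f_c^2(\alpha)$. My strategy is to set $F=K(\alpha)$, invoke Lemma \ref{period4_param} over $F$ to get a parameter $u\in F$ that presents $\alpha$ and $f_c^2(\alpha)$ symmetrically, then use the fact that $\alpha+f_c^2(\alpha)=-b\in K$ to pin $u$ down to $K$. Once $u\in K$ is established, the linear coefficient of $p$ is immediate from the lemma, and the constant term is extracted by a routine symbolic computation.

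Since $\disc\Phi_{4,c}\ne 0$ and $p$ divides $\Phi_{4,c}$, part (4) of Proposition \ref{dynatomic_properties} shows that $\alpha$ has period $4$ under $f_c$. Applying Lemma \ref{period4_param} to $\alpha\in F$ produces elements $u,v\in F$ satisfying \eqref{period4_curve}--\eqref{period4_conjugates}. Adding the two equations in \eqref{period4_conjugates} gives
\[
\alpha + f_c^2(\alpha) \;=\; \frac{u-1}{u+1}.
\]
Writing $p(x)=x^2+bx+d$, the left-hand side is $\alpha+\beta=-b\in K$. The equation $\frac{u-1}{u+1}=-b$ has the unique solution $u=\frac{1-b}{1+b}$ (the alternative $-b=1$ would force $-1=1$), so $u\in K$. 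Thus \eqref{period4_cparameter} is a genuine identity in $K$.

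To finish I would compute $d=\alpha\beta$ from \eqref{period4_conjugates}:
\[
\alpha\beta \;=\; \left(\frac{u-1}{2(u+1)}\right)^{\!2} - \left(\frac{v}{2u(u-1)}\right)^{\!2}.
\]
Substituting $v^2=-u(u^2+1)(u^2-2u-1)$ from \eqref{period4_curve} and collecting over the common denominator $4u(u^2-1)^2$, a direct expansion gives exactly $\frac{u^6+u^5-7u^4+2u^3-9u^2-3u-1}{4u(u^2-1)^2}$, the constant term in \eqref{quad_factor_param}. Combined with $-b=\frac{u-1}{u+1}$, this yields the claimed formula for $p(x)$.

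The only issue requiring care is that the denominators in \eqref{period4_cparameter} and \eqref{quad_factor_param} are nonzero, i.e.\ that $u\notin\{0,1,-1\}$; each of these excluded values makes one of the two expressions in \eqref{period4_conjugates} undefined, so none can arise from a valid application of Lemma \ref{period4_param}. This check and the symbolic identity for $\alpha\beta$ are the only computational steps; neither presents a conceptual obstacle, so the main substantive idea really is the descent step showing $u\in K$ via the $K$-rationality of $\alpha+f_c^2(\alpha)$.
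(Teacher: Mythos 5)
Your proof is correct and follows essentially the same route as the paper's: apply Lemma \ref{period4_param} over $F=K(\alpha)$, read off the linear coefficient of $p$ as $\frac{u-1}{u+1}\in K$ to descend $u$ to $K$, then verify the constant term by a direct computation using \eqref{period4_curve}. The extra details you supply (explicitly solving $\frac{u-1}{u+1}=-b$ for $u$, and noting that $u\notin\{0,\pm 1\}$) are not in the paper's terse version but are harmless additions.
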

\begin{proof}
Let $\alpha\in\bar K$ be a root of $p$ and let $F=K(\alpha)$. Since $\alpha$ is a root of $\Phi_{4,c}$ and $\disc\Phi_{4,c}\ne 0$, $\alpha$ has period 4 under $f_c$. By Lemma \ref{period4_param}, there exist $u, v\in F$ such that \eqref{period4_curve} - \eqref{period4_conjugates} hold. Now, since $\alpha$ and $f_c^2(\alpha)$ are the roots of $p$, we have
\[p(x)=x^2-(\alpha+f_c^2(\alpha))\cdot x+\alpha\cdot f_c^2(\alpha).\]
Using the relations \eqref{period4_curve} and \eqref{period4_conjugates} we obtain the expression \eqref{quad_factor_param}. Thus, we have shown that there is an element $u\in F$ such that \eqref{period4_cparameter} and \eqref{quad_factor_param} hold. However, since the coefficients of $p$ belong to $K$, we have in particular that $(u-1)/(u+1)\in K$, which implies that $u\in K$. This completes the proof.
\end{proof}

\begin{prop}\label{period4_finiteness}
There exist at most finitely many elements $c\in K$ such that the polynomial $\Phi_{4,c}$ has more than two monic irreducible quadratic factors, each with the property that its roots are interchanged by $f_c^2$.
\end{prop}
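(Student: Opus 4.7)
The plan is to recast the conclusion as a statement about rational points on an explicit algebraic curve and then apply Faltings' theorem.

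First, Lemma \ref{quad_factor_lem} reduces the proposition to a question about fibers of a rational map: to each monic irreducible quadratic factor of $\Phi_{4,c}$ of the stated type is associated a value $u \in K$ satisfying $c = g(u)$, where
\[g(u) = \frac{(u^2-4u-1)(u^4+u^3+2u^2-u+1)}{4u(u^2-1)^2}\]
is the degree-$6$ rational function appearing in \eqref{period4_cparameter}. Distinct factors produce distinct $u$'s, since the coefficient $-(u-1)/(u+1)$ of $x$ in \eqref{quad_factor_param} determines $u$ uniquely. Hence the hypothesis that $\Phi_{4,c}$ has more than two such factors is equivalent to the equation $g(u) = c$ having at least three solutions in $K$.

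Next I would introduce the variety $V \subset \A^3_K$ cut out by the two equations $g(u_1) = g(u_2)$ and $g(u_1) = g(u_3)$, with the three diagonals $u_i = u_j$ removed. Since $g$ has degree $6$, a generic fiber of the projection $V \to \A^1_K$ sending $(u_1, u_2, u_3)$ to $g(u_1)$ has exactly $6 \cdot 5 \cdot 4 = 120$ points, so $V$ is one-dimensional (a curve, possibly reducible). Each exceptional $c \in K$ accounts for at most $120$ points of $V(K)$ via reorderings, so it suffices to prove that $V(K)$ is finite.

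The plan is then to invoke Faltings' theorem on each geometrically irreducible component of $V$. These components are in bijection with the orbits of the monodromy group of $g \colon \mathbb P^1 \to \mathbb P^1$ on ordered triples of distinct points in a generic fiber. Since the Galois group of $\Phi_{4,c}$ over the function field $K(c)$ is contained in the wreath product $(\Z/4\Z) \wr S_3$ of order $384$ (compare the remark following Lemma \ref{wreath_subgp_lem}), and since the pair $\{\alpha, f_c^2(\alpha)\}$ parametrized by $u$ is stabilized setwise precisely by the normal subgroup $(2\Z/4\Z)^3 \cong (\Z/2\Z)^3$, the monodromy of $g$ embeds into the hyperoctahedral group $M = (\Z/2\Z)^3 \rtimes S_3$ of order $48$, with the embedding being surjective for generic $c$. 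A direct combinatorial check then shows that $M$ has exactly four orbits on ordered triples of distinct elements in its natural six-element representation, of sizes $48, 24, 24, 24$; hence $V$ has at most four geometrically irreducible components.

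The main obstacle is verifying that each of these four components has geometric genus at least $2$. I would carry this out explicitly, either by computing defining polynomials for each component via elimination in a computer algebra system (in keeping with the computational style employed elsewhere in this paper) and applying a standard genus algorithm, or by applying the Riemann--Hurwitz formula to each component regarded as a cover of $\mathbb P^1_c$, after extracting the ramification data of $g$ from the critical locus $g'(u) = 0$. Once every component is known to have genus at least $2$, Faltings' theorem implies each has only finitely many $K$-rational points, and the finiteness of the exceptional set of $c \in K$ follows.
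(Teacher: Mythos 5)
Your approach is a genuinely different route, and it can in principle work, but it is more elaborate than necessary and has a real gap at the step you yourself flag as the ``main obstacle.''

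The paper sidesteps the fiber-product variety entirely by the following observation: the rational function $g$ satisfies the symmetry $g(u) = g(-1/u)$, so the cleared-denominator form of $g(u) = g(t)$ factors as $(t-u)(ut+1)F(u,t)=0$ for an explicit $F \in \Q[x,y]$. Given three distinct parameters $u_1,u_2,u_3$, at least one of $u_2,u_3$ must lie outside $\{u_1,-1/u_1\}$ (otherwise two of them would coincide), and that single pair already produces a $K$-point on the plane curve $F=0$, which Magma reports has genus $9$. Faltings then finishes in one step. Thus the paper only ever works with \emph{two} of the three factors and with a single curve in $\A^2$; your three-variable variety, component count, and monodromy analysis are not needed.

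Two concrete issues with your sketch. First, you remove only the diagonals $u_i = u_j$ from $V$, but you never note the identity $g(u) = g(-1/u)$. This identity is what forces the factor $(ut+1)$ in the relation $g(u)=g(t)$, and it means your $V$ contains loci of the form $u_iu_j = -1$ that do not correspond to ``genuinely independent'' parameters. Your hyperoctahedral orbit count of $48,24,24,24$ is in fact correct, and the three orbits of size $24$ turn out to be exactly these $u_iu_j=-1$ loci (each birational to $F=0$), but you do not make this connection and it is essential for understanding which components you have and whether they contribute spurious rational points. Second, and more seriously, you explicitly acknowledge you have not verified that each component has genus $\ge 2$. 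For the three type-(b) components this would follow once you identify them with $F=0$; for the remaining ``generic'' component (all three parameters in distinct $\{u,-1/u\}$ orbits) you have no bound at all. Until that computation is done the argument does not establish the proposition. The paper's two-variable reduction makes that entire computation unnecessary, which is why it is the cleaner choice.
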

\begin{proof}
A straightforward calculation shows that for every $c\in K$,
\begin{equation}\label{phi4_disc}
\disc \Phi_{4,c} = (5 + 4c)^2(5 - 8c + 16c^2)^3(135 + 108c + 144c^2 + 64c^3)^4.
\end{equation}
It follows in particular that there are only finitely many $c\in K$ for which $\disc\Phi_{4,c}=0$. Hence, it suffices to prove the result assuming that $\disc\Phi_{4,c}\ne 0$. Let $c\in K$. Suppose that $\disc\Phi_{4,c}\ne 0$ and that $\Phi_{4,c}$ has three monic irreducible quadratic factors $p_1,p_2,p_3\in K[x]$, each with the property that its roots are interchanged by $f_c^2$. By Lemma \ref{quad_factor_lem}, there are elements $u_1,u_2,u_3\in K$ such that
\begin{equation}\label{quad_factors_period4}
p_i(x)=x^2-\frac{u_i-1}{u_i+1}\cdot x+\frac{u_i^6+u_i^5-7u_i^4+2u_i^3-9u_i^2-3u_i-1}{4u_i(u_i^2-1)^2}
\end{equation}
and
\begin{equation}\label{c_expressions_period4}
c=\frac{(u_i^2-4u_i-1)(u_i^4+u_i^3+2u_i^2-u_i+1)}{4u_i(u_i^2-1)^2}.
\end{equation}

Since the polynomials $p_i$ are distinct, \eqref{quad_factors_period4} implies that the elements $u_i$ are distinct. Letting $u=u_1$, it follows that at least one of the elements $u_2, u_3$ must lie outside the set $\{u,-1/u\}$. Let $t\in\{u_2,u_3\}$ be different from $u$ and $-1/u$. By \eqref{c_expressions_period4} we have
\[\frac{(u^2-4u-1)(u^4+u^3+2u^2-u+1)}{4u(u^2-1)^2}=\frac{(t^2-4t-1)(t^4+t^3+2t^2-t+1)}{4t(t^2-1)^2}.\]
Clearing denominators and regrouping, this equation becomes
\begin{equation}\label{period4_initial_eq}
(t-u)(ut+1)F(u,t)=0,
\end{equation}
where $F\in\Q[x,y]$ is given by
\[F(x,y)=(x^2 - 1)^2y^4 + (4x)^2y^3 - 2(x^2 - 1)(x^2 - 8x - 1)y^2 - (4x)^2y + (x^2 - 1)^2.\]
Let $C$ be the algebraic curve over $\Q$ defined by the equation $F(x,y)=0$. Since $t$ is different from $u$ and $-1/u$,  \eqref{period4_initial_eq} implies that $F(u,t)=0$, and therefore $(u,t)$ is a $K$-rational point on $C$. Using functionality for computations with algebraic curves in Magma \cite{magma}, we find that $C$ has genus 9.  Hence, by Faltings' theorem, $C$ has only finitely many $K$-rational points. Since $u$ is a coordinate of a $K$-rational point on $C$, there are only finitely many options for $u$. The relation \eqref{c_expressions_period4} therefore implies that there are only finitely many options for $c$.
\end{proof}

\begin{thm}\label{period4_lgp}
There exist at most finitely many elements $c\in K$ for which the statement $\mathcal E^{\ast}(f_c,4)$ is false. The same holds for the statement $\mathcal E(f_c,4)$.
\end{thm}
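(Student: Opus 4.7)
The plan is simply to assemble the ingredients already in place: the discriminant formula \eqref{phi4_disc}, Lemma \ref{period4_splitting}, Proposition \ref{period4_finiteness}, and Lemma \ref{stars_lem}. First I would split the set of $c \in K$ according to whether $\disc\Phi_{4,c}$ vanishes. From \eqref{phi4_disc}, $\disc\Phi_{4,c}$ is a nonzero polynomial expression in $c$, so it has only finitely many roots in $K$. Discarding these finitely many exceptional values, it suffices to prove both statements under the additional hypothesis $\disc\Phi_{4,c}\ne 0$.

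Next, suppose $c \in K$ satisfies $\disc\Phi_{4,c}\ne 0$ and $\calE^{\ast}(f_c,4)$ fails. Applied to $f = f_c$, Lemma \ref{period4_splitting} then guarantees that $\Phi_{4,c}$ factors in $K[x]$ as a product of six monic irreducible quadratic polynomials, each with the property that its two roots in $\bar K$ are interchanged by $f_c^2$. Since $6 > 2$, this puts $c$ into the finite exceptional set identified in Proposition \ref{period4_finiteness}. Hence the set of $c$ for which $\calE^{\ast}(f_c,4)$ fails is finite.

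For the statement about $\calE(f_c,4)$, I would invoke Lemma \ref{stars_lem}: whenever $\disc\Phi_{4,c}\ne 0$, a failure of $\calE(f_c,4)$ forces a failure of $\calE^{\ast}(f_c,4)$. Thus the exceptions to $\calE(f_c,4)$ are confined to (i) the finitely many $c$ with $\disc\Phi_{4,c}=0$ and (ii) the finitely many $c$ obtained in the previous paragraph, giving a finite set in total.

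I do not anticipate any real obstacle, since the substantive work has already been carried out — the Galois-theoretic computation underlying Lemma \ref{period4_splitting} and the genus-$9$ Faltings argument inside Proposition \ref{period4_finiteness}. The only minor care needed is to verify that Lemma \ref{period4_splitting}, stated for a general quadratic $f$, applies with $f=f_c$, which is immediate since $f_c(x)=x^2+c$ is quadratic and the $\mathcal E^{\ast}$ statement is invariant under linear conjugacy by Corollary \ref{dynatomic_conjugate_cor2}.
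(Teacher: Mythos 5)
Your argument is correct and follows the paper's proof essentially verbatim: discard the finitely many $c$ with $\disc\Phi_{4,c}=0$ via \eqref{phi4_disc}, reduce $\calE(f_c,4)$ to $\calE^{\ast}(f_c,4)$ by Lemma \ref{stars_lem}, apply Lemma \ref{period4_splitting} to produce six monic irreducible quadratic factors with roots swapped by $f_c^2$, and invoke Proposition \ref{period4_finiteness} for finiteness. The only cosmetic difference is that you treat the two statements in sequence, while the paper handles them together by assuming at least one fails.
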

\begin{proof}
 It suffices to prove the result assuming that $\disc\Phi_{4,c}\ne 0$. Suppose that $c\in K$ is such that $\disc\Phi_{4,c}\ne 0$ and at least one of the statements $\mathcal E^{\ast}(f_c,4)$ and  $\mathcal E(f_c,4)$ fails to hold. By Lemma \ref{stars_lem}, this implies that $\mathcal E^{\ast}(f_c,4)$ does not hold. Applying Lemma \ref{period4_splitting} we conclude that $\Phi_{4,c}$ factors as the product of six irreducible quadratic polynomials, each with the property that its roots are interchanged by the map $f_c^2$. Since $\Phi_{4,c}$ is monic, we may assume without loss of generality that every quadratic factor is monic. Proposition \ref{period4_finiteness} now implies that there are only finitely many possible values for $c$.
\end{proof}

Theorem \ref{period4_lgp} shows that there are, up to linear conjugacy, only finitely many exceptions to the local-global principle $\mathcal E^{\ast}(f,4)$ when $f$ is a quadratic polynomial. In the case $K=\Q$ we can go further and show that there is \textit{no} exception to this principle.

\begin{thm}\label{period4_rational_thm}
Let $f\in\Q[x]$ be a quadratic polynomial. Then $\delta(\calS_{\Phi_{4,f}})<1$. In particular, there exist infinitely many primes $p$ such that $f$ does not have a point of period $4$ in $\Q_p$.
\end{thm}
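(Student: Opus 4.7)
The strategy is to combine Theorem \ref{period4_lgp} with Morton's theorem \cite{morton_4cycles} to reduce the claim to a finite list of potentially exceptional $c$-values, and then rule these out. Since $f$ is linearly conjugate over $\Q$ to $f_c$ for a unique $c\in\Q$, Corollary \ref{dynatomic_conjugate_cor2}(3) gives $\calS_{\Phi_{4,f}}=\calS_{\Phi_{4,c}}$, so it suffices to prove $\delta(\calS_{\Phi_{4,c}})<1$ for every $c\in\Q$; the second assertion then follows because by Proposition \ref{dynatomic_properties}(1) the set of primes at which $f_c$ acquires a period-$4$ point is contained in $\calS_{\Phi_{4,c}}$.

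The case $\disc\Phi_{4,c}=0$ is a finite computation: by \eqref{phi4_disc}, only $c=-5/4$ together with the at most three rational roots of $64c^3+144c^2+108c+135$ are possible, and each is handled by factoring $\Phi_{4,c}$ over $\Q$ and computing the density of its squarefree part via Corollary \ref{irred_density_cor} or Theorem \ref{bb_density_thm}. Assume henceforth $\disc\Phi_{4,c}\ne 0$. Morton's theorem asserts that $f_c$ has no rational point of period $4$, so by Corollary \ref{period-dynatomic_cor} the polynomial $\Phi_{4,c}$ has no rational root; therefore $\delta(\calS_{\Phi_{4,c}})<1$ is precisely the statement $\mathcal E^{\ast}(f_c,4)$, which by Theorem \ref{period4_lgp} fails for at most finitely many $c\in\Q$. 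The remaining task is to show that no rational $c$ is actually an exception.

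By Lemma \ref{period4_splitting}, any exceptional $c$ forces $\Phi_{4,c}$ to split into six monic irreducible quadratic factors whose roots are swapped by $f_c^2$; by Lemma \ref{quad_factor_lem} each factor is parametrized by a value $u\in\Q$ satisfying \eqref{period4_cparameter}, and three sufficiently independent such $u$-values (extracted as in the proof of Proposition \ref{period4_finiteness}) yield a rational point on the genus-$9$ plane curve $C:F(x,y)=0$ with $x\ne y$ and $xy\ne -1$. The crux of the proof, and its main obstacle, is therefore the determination of $C(\Q)$: Faltings' theorem supplies only finiteness, not effectivity, and Chabauty-Coleman is not directly applicable in genus $9$. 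A natural line of attack is to exploit the symmetries $(x,y)\mapsto(y,x)$ and $(x,y)\mapsto(-1/x,-1/y)$, which a direct check shows preserve $F$ up to scaling, in order to pass to a quotient of $C$ of sufficiently low genus that Chabauty-Coleman or a descent argument can be run. Any candidate rational points on $C$ that survive this analysis yield a short list of specific $c$-values, each of which is then settled individually by applying Theorem \ref{bb_density_thm} to $\Phi_{4,c}$ (using the code of \cite{paper_code}) to confirm directly that $\delta(\calS_{\Phi_{4,c}})<1$.
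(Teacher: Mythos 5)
Your reduction to $f_c$, your identification of Morton's theorem as the source of the hypothesis of $\mathcal E^{\ast}(f_c,4)$, and your handling of the degenerate case $\disc\Phi_{4,c}=0$ (by explicit factorization and density computation) all match the paper in spirit. The genuine gap is in the main case $\disc\Phi_{4,c}\ne 0$. You correctly observe, following Lemma \ref{period4_splitting} and the machinery of Proposition \ref{period4_finiteness}, that an exceptional $c$ would produce a rational point (with $x\ne y$ and $xy\ne -1$) on the genus-$9$ curve $C:F(x,y)=0$. But you then say, in your own words, that determining $C(\Q)$ is ``the crux of the proof, and its main obstacle,'' and the remainder of your argument is speculative: you suggest quotienting by the symmetries $(x,y)\mapsto(y,x)$ and $(x,y)\mapsto(-1/x,-1/y)$ and then applying Chabauty--Coleman or descent to a lower-genus quotient, but you do not actually construct such a quotient, establish its genus, verify the Chabauty rank condition, or carry out the computation. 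As it stands the proposal is a reduction to an unsolved Diophantine problem, not a proof.

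The paper's actual proof avoids the curve $C$ entirely when $K=\Q$. After Lemma \ref{period4_splitting} forces $\Phi_{4,f}$ to be a product of six irreducible quadratics in $\Q[x]$, it cites a result of Panraksa (Theorem 2.3.5 of \cite{panraksa_thesis}) asserting that, for quadratic $f$ over $\Q$, the dynatomic polynomial $\Phi_{4,f}$ can have at most three irreducible quadratic factors. This gives an immediate contradiction with no curve computation needed. Faltings-based finiteness (Proposition \ref{period4_finiteness} and Theorem \ref{period4_lgp}) is only used for the general number field statement, precisely because it is ineffective; the $\Q$ case is settled by the stronger, effective input from Panraksa. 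You should either locate and invoke an effective bound on quadratic factors of $\Phi_{4,f}$ over $\Q$ of this kind, or actually carry out the quotient-and-Chabauty analysis you sketch; otherwise the central step is missing.

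One minor point: in the degenerate case you write that ``the at most three rational roots of $64c^3+144c^2+108c+135$'' must also be checked. That cubic has no rational roots (a rational-root-theorem check suffices), which is why the paper treats only $c=-5/4$. Your cautious phrasing is harmless, but you should actually perform that verification rather than leave it implicit.
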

\begin{proof}
By a theorem of Morton \cite[Thm. 4]{morton_4cycles}, $\Phi_{4,f}$ does not have a root in $\Q$. Hence, the premise of $\mathcal E^{\ast}(f,4)$ is true; we have to show that the conclusion also holds. The proof will be divided into two cases.

\underline{\textit{Case 1:}} $\disc \Phi_{4,f}\ne 0$. Suppose by contradiction that $\delta(\calS_{\Phi_{4,f}})=1$. In other words, the statement $\mathcal E^{\ast}(f,4)$ fails to hold. By Lemma \ref{period4_splitting}, $\Phi_{4,f}$ is then the product of six irreducible quadratic polynomials in $\Q[x]$. However, it is known that $\Phi_{4,f}$ cannot have four or more irreducible quadratic factors; see \cite[Thm. 2.3.5]{panraksa_thesis}. This is a contradiction, so we conclude that $\delta(\calS_{\Phi_{4,f}})<1$, as claimed.

\underline{\textit{Case 2:}} $\disc \Phi_{4,f}=0$. Let $c$ be the unique rational number such that $f$ is linearly conjugate to $f_c$. By \eqref{phi4_disc} we must have $c=-5/4$. Factoring the polynomial $\Phi_{4,c}$ we find that
\[2^{12}\cdot\Phi_{4,c}(x)=P(2x)\cdot Q(2x)^2,\]
where
\[P(x)=x^8 - 4x^7 - 16x^6 + 84x^5 - 6x^4 - 364x^3 + 584x^2 - 836x + 1021\]
and $Q(x)=x^2 + 2x - 1$. It follows from this factorization that $\calS_{\Phi_{4,c}}=\calS_{PQ}$ and therefore $\delta(\calS_{\Phi_{4,f}})=\delta(\calS_{PQ})$. We will use the inequality $\delta(\calS_{PQ})\le\delta(\calS_P)+\delta(\calS_Q)$ to show that $\delta(\calS_{PQ})<1$. The polynomial $Q$ is irreducible, so Corollary \ref{normal_poly_cor} implies that $\delta(\calS_Q)=1/2$. To compute the density of $\calS_P$ we apply Theorem \ref{bb_density_thm}. Using code available in \cite{paper_code}, we obtain $\delta(\calS_P)=7/32$. Therefore,
\[\delta(\calS_{\Phi_{4,f}})=\delta(\calS_{PQ})\le \delta(\calS_P)+\delta(\calS_Q)=7/32+1/2<1.\]
We have shown in both cases that $\delta(\calS_{\Phi_{4,f}})<1$, thus proving the first statement of the theorem. For the second statement, note that there are infinitely many primes not belonging to the set $\calS_{\Phi_{4,f}}$. For every such prime $p$, $\Phi_{4,f}$ does not have a root in $\Q_p$, so part (1) of Proposition \ref{dynatomic_properties} implies that $f$ does not have a point of period $4$ in $\Q_p$.
\end{proof}

\subsection{The local-global principle for period 5}\label{period5_subsection}
We now consider the statement $\mathcal E^{\ast}(f,5)$.
 
\begin{lem}\label{period5_splitting}
Let $f\in K[x]$ be a quadratic polynomial and let $n=5$. Suppose that $\disc\Phi_{5,f}\ne 0$ and that the statement \ref{lgp} fails to hold. Let $G$ be the Galois group of $\Phi_{5,f}$ and let $D$ be the set of degrees of irreducible factors of $\Phi_{5,f}$. Then one of the following must hold:
\begin{itemize}
\item Every 5-cycle of $f$ is $\Gal(\bar K/K)$-invariant, $G\cong\Z/5\Z\times\Z/5\Z$, and $D=\{5\}$;
\item Some 5-cycle of $f$ is $\Gal(\bar K/K)$-invariant, $G\cong S_3\times (\Z/5\Z)$, and $D=\{2,3,5\}$;
\item $G\cong\Z/2\Z\times\Z/2\Z$ and $D=\{2\}$.
\end{itemize}
\end{lem}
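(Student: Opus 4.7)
The plan is to repeat, with $n = 5$ in place of $n = 4$, the algorithmic argument employed in the proof of Lemma \ref{period4_splitting}. Applying Theorem \ref{main_alg_thm} with $n = 5$ and $s = 2$, the degree formula \eqref{dynatomic_degree} gives $N = 2^{5} - 2 = 30$ and $r = 6$, so $\pi \in S_{30}$ is a product of six disjoint 5-cycles and its centralizer $W$ is a concrete realization of the wreath product $(\Z/5\Z) \wr S_{6}$, of order $5^{6} \cdot 720 = 11{,}250{,}000$. Running the implementation from \cite{paper_code} of Algorithm \ref{main_algorithm} on input $(5, 2)$, one enumerates representatives $H_{1}, \ldots, H_{t}$ of the conjugacy classes of subgroups of $W$ and retains those satisfying
\[H \subseteq \bigcup_{i=1}^{6} \Stab_{W}(5i), \qquad H \not\subseteq \Stab_{W}(5i) \text{ for every } 1 \le i \le 6.\]
I expect the output $\calP$ to consist of exactly three pairs $(H, I)$, matching the three lines of the lemma in isomorphism type and index set: $H \cong \Z/5\Z \times \Z/5\Z$ with $I = \{5\}$; $H \cong S_{3} \times \Z/5\Z$ with $I = \{2, 3, 5\}$; and $H \cong \Z/2\Z \times \Z/2\Z$ with $I = \{2\}$.

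For each such $H$, part (4) of Theorem \ref{main_alg_thm} yields a bijection $u : \{1, \ldots, 30\} \to R$ and an isomorphism $\rho : G \to H$ with $u \circ \pi = f \circ u$ and $g \circ u = u \circ \rho(g)$ for every $g \in G$. The first relation transports the six $\pi$-orbits $\{5i-4, \ldots, 5i\}$ to the six 5-cycles of $f$, and the second transports the action of $G$ on these cycles to the action of $H$ on the $\pi$-orbits. The latter is described by the natural projection $W \to S_{6}$ coming from the wreath product structure $W \cong (\Z/5\Z)^{6} \rtimes S_{6}$; consequently, a 5-cycle of $f$ is $\Gal(\bar K / K)$-invariant if and only if the image of $H$ in $S_{6}$ fixes the corresponding index in $\{1, \ldots, 6\}$. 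I expect the subgroup isomorphic to $\Z/5\Z \times \Z/5\Z$ to lie in the kernel $(\Z/5\Z)^{6}$ of $W \to S_{6}$ (so every cycle is invariant); the one isomorphic to $S_{3} \times \Z/5\Z$ to project to a subgroup of $S_{6}$ with exactly one fixed point and further orbits of sizes $2$ and $3$ (so precisely one cycle is invariant); and the one isomorphic to $\Z/2\Z \times \Z/2\Z$ to project to a subgroup acting without fixed points on $\{1, \ldots, 6\}$, pairing the six cycles into three pairs of swapped cycles (consistent with $D = \{2\}$ and with 15 irreducible quadratic factors).

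The main obstacle I anticipate is computational: with $|W| \approx 1.1 \times 10^{7}$, enumerating the conjugacy classes of subgroups of $W$ is considerably more demanding than in the $n = 4$ case, where $|W| = 384$. In practice one would rely on the efficient subgroup-lattice routines referenced in \cite{cannon-cox-holt}, and exploit the constraint $H \subseteq \bigcup_{i} \Stab_{W}(5i)$ to prune the search early, for instance by first testing the condition on a set of generators of each candidate subgroup. Once the list $\calP$ is produced, the identification of the three isomorphism types and the verification of the cycle-invariance statements reduce to straightforward group-theoretic checks carried out on the three surviving subgroups.
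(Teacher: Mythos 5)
Your approach is essentially the same as the paper's: apply Theorem \ref{main_alg_thm} with $(n,s)=(5,2)$, run Algorithm \ref{main_algorithm}, and read off the three possible isomorphism types and index sets from the output, then use the bijection $u$ and isomorphism $\rho$ to identify which $5$-cycles of $f$ are Galois-invariant.

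One factual discrepancy: you predict that the output $\calP$ consists of exactly three pairs, but the algorithm actually returns \emph{twelve} pairs. Ten of them are (pairwise non-conjugate in $W$) subgroups isomorphic to $\Z/5\Z\times\Z/5\Z$ with $I=\{5\}$, plus one pair with $H\cong S_3\times\Z/5\Z$, $I=\{2,3,5\}$, and one with $H\cong\Z/2\Z\times\Z/2\Z$, $I=\{2\}$. Since Algorithm \ref{main_algorithm} enumerates conjugacy classes of subgroups rather than isomorphism classes, multiple conjugacy classes share an abstract isomorphism type; the ten groups isomorphic to $\Z/5\Z\times\Z/5\Z$ must each be examined (which the paper does). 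Fortunately all ten lie in the kernel of $W\to S_6$, so the cycle-invariance conclusion is the same for all of them, and your argument survives unchanged. It is worth flagging, though, because running the algorithm with the expectation of a three-element output and obtaining twelve would be confusing without this explanation.

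Your reformulation of the cycle-invariance check via the projection $W\cong(\Z/5\Z)^6\rtimes S_6\to S_6$ is a cleaner and more conceptual version of what the paper does. The paper verifies properties such as $h(k)\in\{k,\pi(k),\ldots,\pi^4(k)\}$ by inspecting generators; your observation that this is exactly the condition that the image of $H$ in $S_6$ fixes the relevant index makes the computation more transparent and is arguably preferable exposition. One small imprecision in the logic: Theorem \ref{main_alg_thm} supplies the bijection $u$ and isomorphism $\rho$ only for the particular pair $(H,I)$ with $H\cong G$ and $I=D$, not ``for each such $H$''; the correct phrasing is that whichever of the twelve pairs is the relevant one, the corresponding $u$ and $\rho$ exist, and one then argues case by case. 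This does not affect the conclusion.
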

\begin{proof}
We apply Theorem \ref{main_alg_thm} with $n=5$ and $s=2$. Carrying out the steps of Algorithm \ref{main_algorithm} with input $(5,2)$, we have
\[N=30,\; r = 6,\;\;\text{and}\;\; \pi=(1,2,3,4,5)\cdots(26, 27, 28, 29, 30)\in S_{30}.\]
The centralizer of $\pi$ is a subgroup $W\le S_{30}$ of order 11250000 generated by the following two permutations:
\begin{align*}
&(1, 18, 8, 12, 3, 20, 10, 14, 5, 17, 7, 11, 2, 19, 9, 13, 4, 16, 6, 15)(21, 26, 23, 28, 25, 30, 22, 27, 24, 29),\\
&(1, 25, 14, 29, 9, 19)(2, 21, 15, 30, 10, 20)(3, 22, 11, 26, 6, 16)(4, 23, 12, 27, 7, 17)(5, 24, 13, 28, 8, 18).
\end{align*}

Using Sage we obtain a list $H_1,\ldots, H_{20844}$ of representatives of the conjugacy classes of subgroups of $W$. For each subgroup $H$ in this list we then check the conditions
\[H\subseteq \bigcup_{i=1}^6\Stab_W(5i) \;\;\text{and}\;\; H\not\subseteq\Stab_W(5i) \;\;\text{for}\;\; 1\le i\le 6.\]
We find that exactly twelve of the groups $H_m$ satisfy these conditions\footnote{The computation of Algorithm \ref{main_algorithm} with input $(5,2)$ was carried out on an iMac with a 2.5 GHz Intel Core i5 processor and 4 GB of memory. The total time required was 8 minutes and 33 seconds.}; we will henceforth denote these groups by $H_1,\ldots, H_{12}$. For each of the groups $H_m$ we compute the elements of the set
\[I_m=\{|H_m: \Stab_{H_m}(5i)|:1\le i\le 6\}.\]

The twelve resulting pairs $(H_m,I_m)$, which form the output of Algorithm \ref{main_algorithm}, are listed in Appendix \ref{appendix}. By Theorem \ref{main_alg_thm}, there is a pair $(H,I)$ in this list such that $G\cong H$ and $D=I$. Moreover, letting $R$ be the set of roots of $\Phi_{5,f}$ in $\bar K$, there exist a bijection $u:\{1,\ldots, 30\}\to R$ and an isomorphism $\rho:G\to H$ such that
\begin{equation}\label{period5_bijections_property}
u\circ\pi=f\circ u\;\;\text{and}\;\;g\circ u=u\circ(\rho(g))\;\;\text{for every}\;\;g\in G.
\end{equation}
We now divide the proof into cases depending on which of the twelve pairs $(H,I)$ is.

\underline{\textit{Case 1:}} $H$ is one of the groups $H_1,\ldots, H_{10}$. Referring to the appendix, we see that $H\cong\Z/5\Z\times\Z/5\Z$ and $I=\{5\}$. Thus, $G\cong\Z/5\Z\times\Z/5\Z$ and $D=\{5\}$. We claim that every $5$-cycle in $R$ is invariant under the action of $\Gal(\bar K/K)$, or equivalently, under the action of $G$. By considering the generators of $H_1,\ldots, H_{10}$ listed in the appendix, one can check that $H$ has the following property:
\begin{equation}\label{period5_H_1_10_property}
\text{For every}\;\;h\in H\;\;\text{and every}\;\;k\in\{1,\ldots, 30\},\;\;h(k)\in\{k, \pi(k), \pi^2(k), \pi^3(k), \pi^4(k)\}.
\end{equation}
Let $\alpha\in R$. We must show that the cycle of $R$ containing $\alpha$ is invariant under $G$. Since every element of $G$ commutes with $f$, it suffices to show that for every $g\in G$, $g(\alpha)=f^i(\alpha)$ for some $i$. Let $g\in G$ and set $h=\rho(g)$ and $k=u^{-1}(\alpha)$. By \eqref{period5_H_1_10_property} we have that $h(k)=\pi^i(k)$ for some $0\le i\le 4$. Thus, $u(h(k))=u(\pi^i(k)))$, so $u\circ h\circ u^{-1}(\alpha)=u\circ\pi^i\circ u^{-1}(\alpha)$ and hence, by \eqref{period5_bijections_property}, $g(\alpha)=f^i(\alpha)$. This proves that every cycle in $R$ is $G$-invariant. Thus, we are in the first case of the lemma.

\underline{\textit{Case 2:}} $H=H_{11}$. Referring to the appendix, we see that $H\cong S_3\times (\Z/5\Z)$ and $I=\{2,3,5\}$. Therefore, $G\cong S_3\times (\Z/5\Z)$ and $D=\{2,3,5\}$. We claim that there is some $5$-cycle in $R$ that is $G$-invariant. Considering the generators of $H_{11}$ given in the appendix, we deduce the following:
\begin{equation}\label{H11_property}
\text{For every}\;\;h\in H\;\;\text{and every}\;\;k\in\{1,2,3,4,5\},\;\;h(k)\in\{k, \pi(k), \pi^2(k), \pi^3(k), \pi^4(k)\}.
\end{equation}
Let $\alpha=u(6)$. By using the above property of $H$ and arguing as in the previous case, one can show that the cycle containing $\alpha$ is invariant under $G$. Thus, we are in the second case of the lemma.

\underline{\textit{Case 3:}} $H=H_{12}$. Referring to the appendix, we see that $H\cong\Z/2\Z\times\Z/2\Z$ and $I=\{2\}$. Therefore, $G\cong\Z/2\Z\times\Z/2\Z$ and $D=\{2\}$. We are thus in the third case of the lemma.
\end{proof}

Lemma \ref{period5_splitting} suggests that in order to understand the possible failure of the local-global principle $\mathcal E^{\ast}(f,5)$, it is important to study the questions of how $f$ could have a Galois-invariant cycle and how the polynomial $\Phi_{5,f}$ could have an irreducible quadratic factor. As we will show below, there are two \textit{dynamical modular curves} that are relevant to these questions. We recall here the definition and basic properties of these curves, and refer the reader to \cite[\S 4.2]{silverman_dynamics} for further details.

Viewing $c$ as an indeterminate, we consider the function field $\Q(c)$ and the polynomial $f_c(x)=x^2+c\in \Q(c)[x]$. Since $f_c$ has coefficients in the field $\Q(c)$, the dynatomic polynomial $\Phi_{n,c}$ also has coefficients in this field. However, one can show that in fact all the coefficients of $\Phi_{n,c}$ lie in the subring $\Z[c]$; hence, $\Phi_{n,c}\in\Z[c][x]$. Let $\Phi_{n}(c,x)$ denote the image of $\Phi_{n,c}(x)$ under the natural isomorphism $\Z[c][x]\to\Z[c,x]$, and let $C_1(n)\subset\A^2=\Spec \Q[c,x]$ be the curve defined by the equation $\Phi_{n}(c,x)=0$. The curve $C_1(n)$ has an automorphism $\sigma$ given by $(c,x)\mapsto(c, x^2+c)$; the quotient of $C_1(n)$ by the group $\langle\sigma\rangle$ is a curve, which we denote by $C_0(n)$. In \cite[p. 335]{morton_dynatomic_curves}, Morton defines a polynomial $\tau_n\in\Q[c,t]$ with the property that $C_0(n)$ is birational to the affine plane curve $\tau_n(c,t)=0$. We will henceforth identify $C_0(n)$ with this plane curve. The quotient map $C_1(n)\to C_0(n)$ is then given by $(c,x)\mapsto (c,\theta_n(c,x))$, where
\[\theta_n(c,x)=x+f_c(x)+\cdots+f_c^{n-1}(x).\]

Consider now a number field $K$ and an element $c\in K$. The following properties of the curves $C_1(n)$ and $C_0(n)$ are easily proved from the definitions:  
\begin{itemize}
\item If $\alpha\in K$ has period $n$ under $f_c$, then $(c,\alpha)$ is a $K$-rational point on $C_1(n)$.
\item Suppose that $\alpha\in\bar K$ has period $n$ under $f_c$, and that the cycle $\{\alpha,f_c(\alpha),\ldots,f_c^{n-1}(\alpha)\}$ is invariant (as a set) under the action of $\Gal(\bar K/K)$. Then $(c,\theta_n(c,\alpha))$ is a $K$-rational point on $C_0(n)$.
\end{itemize}

For the purposes of this section, the curve $C_0(5)$ is especially important. As shown in \cite{flynn-poonen-schaefer}, this is a curve of genus 2. The polynomial $\tau_5(c,t)$ defining the curve is computed in \cite[p. 99]{morton_4cycles}:
\begin{multline}\label{tau5_expression}
\tau_5(c,t)=t^6 + t^5 + t^4(11c + 3) + t^3(18c + 11) + \\t^2(19c^2 + 19c + 44) + t(17c^2 -24c + 36) + 9c^3 + 40c^2 + 28c + 32.
\end{multline}

Another curve playing an important role in this section is the curve $\calX\subset\A^3=\Spec\Q[a,b,c]$ defined by the equations $r_1(a,b,c)=r_0(a,b,c)=0$, where $r_1$ and $r_0$ are the polynomials defined in \eqref{X_curve_equations}. Using Magma \cite{magma} we find that the genus of $\calX$ is 11.

\begin{lem}\label{period5_finiteness_lem}
Let $c\in K$ and suppose that the polynomial $\Phi_{5,c}$ has an irreducible quadratic factor in $K[x]$. Then one of the following holds:
\begin{enumerate}
\item There exists $\theta\in K$ such that $(c,\theta)\in C_0(5)(K)$.
\item There exist $a,b\in K$ such that $(a,b,c)\in \calX(K)$.
\end{enumerate}
\end{lem}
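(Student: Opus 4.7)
Let $\alpha\in\bar K$ be a root of the given irreducible quadratic factor $p(x)\in K[x]$ of $\Phi_{5,c}$. Since $p$ has degree two, the other root lies in $K(\alpha)$; write $\beta$ for that root and $\tau$ for the nontrivial element of $\Gal(K(\alpha)/K)$, so that $\tau(\alpha)=\beta$. Both $\alpha$ and $\beta$ are roots of $\Phi_{5,c}$, hence $(c,\alpha),(c,\beta)\in C_1(5)(K(\alpha))$. Applying the quotient map $C_1(5)\to C_0(5)$ and setting $\theta_\alpha=\theta_5(c,\alpha)$ and $\theta_\beta=\theta_5(c,\beta)$, we obtain $K(\alpha)$-rational points $(c,\theta_\alpha),(c,\theta_\beta)$ on $C_0(5)$; in particular $\tau_5(c,\theta_\alpha)=\tau_5(c,\theta_\beta)=0$. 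Because $f_c\in K[x]$, the automorphism $\tau$ commutes with $f_c$, and consequently $\tau(\theta_\alpha)=\theta_\beta$.

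The argument then splits according to whether $\theta_\alpha=\theta_\beta$. If $\theta_\alpha=\theta_\beta$, then $\theta_\alpha$ is fixed by $\tau$ and hence lies in $K$; the point $(c,\theta_\alpha)$ is then a $K$-rational point of $C_0(5)$, giving possibility (1). If instead $\theta_\alpha\ne\theta_\beta$, consider
\[
q(t)=(t-\theta_\alpha)(t-\theta_\beta)=t^2-(\theta_\alpha+\theta_\beta)t+\theta_\alpha\theta_\beta\in K(\alpha)[t].
\]
Since $\tau$ merely permutes $\theta_\alpha$ and $\theta_\beta$, the coefficients of $q$ are fixed by $\tau$, so $q\in K[t]$. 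Because $\theta_\alpha$ and $\theta_\beta$ are distinct roots of $\tau_5(c,t)$, the polynomial $q$ divides $\tau_5(c,t)$ in $K(\alpha)[t]$, and therefore in $K[t]$ as well.

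Setting $a=\theta_\alpha+\theta_\beta$ and $b=\theta_\alpha\theta_\beta$, both of which lie in $K$, we have that $t^2-at+b$ divides $\tau_5(c,t)$. To match this divisibility with the defining equations of $\calX$, one performs the polynomial division of $\tau_5(c,t)$, expanded as in \eqref{tau5_expression}, by $t^2-at+b$ in $\Q[a,b,c][t]$; the remainder has degree at most one in $t$, and a direct calculation identifies it with $r_1(a,b,c)\cdot t + r_0(a,b,c)$, where $r_1,r_0$ are precisely the polynomials from \eqref{X_curve_equations}. Since the remainder vanishes for our particular $a,b,c$, we obtain $r_1(a,b,c)=r_0(a,b,c)=0$, so $(a,b,c)\in\calX(K)$, giving possibility (2). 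The one step requiring explicit computation is the identification of this remainder with $r_1 t + r_0$; everything else is routine Galois-theoretic bookkeeping on the quadratic extension $K(\alpha)/K$.
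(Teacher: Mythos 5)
Your proof follows essentially the same route as the paper: push a root $\alpha$ of the quadratic factor through the quotient map $C_1(5)\to C_0(5)$, and split on whether its image $\theta$ lies in $K$ or generates the quadratic extension. (Your case split $\theta_\alpha=\theta_\beta$ vs.\ $\theta_\alpha\ne\theta_\beta$ is exactly the paper's $\theta\in K$ vs.\ $\theta\notin K$, since $\theta_\alpha\in K\iff\tau(\theta_\alpha)=\theta_\alpha$.)

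There is, however, a sign error at the final step. You set $a=\theta_\alpha+\theta_\beta$ and $b=\theta_\alpha\theta_\beta$, so the monic quadratic with roots $\theta_\alpha,\theta_\beta$ is $t^2-at+b$. But the polynomials $r_1,r_0$ in \eqref{X_curve_equations} arise as the remainder when $\tau_5(c,t)$ is divided by $m(t)=t^2-at-b$, which is how the paper normalizes the minimal polynomial of $\theta$ (so there $b=-\theta\cdot\tau(\theta)$). The two polynomials $r_1,r_0$ are not even in $b$, so with your convention the remainder is $r_1(a,-b,c)\,t+r_0(a,-b,c)$, and your conclusion should read $(a,-b,c)\in\calX(K)$. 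Simply set $b=-\theta_\alpha\theta_\beta$ (equivalently, take $m(t)=t^2-at-b$ as the minimal polynomial) and the argument goes through verbatim. This is a bookkeeping slip, not a gap in the idea.
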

\begin{proof}
Let $p\in K[x]$ be an irreducible quadratic factor of $\Phi_{5,c}$ and let $\alpha\in\bar K$ be a root of $p$. Let $L=K(\alpha)$ be the quadratic extension of $K$ generated by $\alpha$. Since $\alpha$ is a root of $\Phi_{5,c}$, then $\Phi_5(c,\alpha)=0$, so $(c,\alpha)$ is a point in $C_1(5)(L)$. Let $(c,\theta)\in C_0(5)(L)$ be the image of $(c,\alpha)$ under the map $C_1(5)\to C_0(5)$. If $\theta\in K$, then $(c,\theta)$ is a $K$-rational point on $C_0(5)$, and we are in the first case of the lemma. Suppose now that $\theta\notin K$ and let $m(t)=t^2-at-b\in K[t]$ be its minimal polynomial. Since $\theta$ is a root of the polynomial $\tau_5(c,t)\in K[t]$, this polynomial must be divisible by $m$. Beginning with the expression \eqref{tau5_expression} and using long division, we find that the remainder when $\tau_5(c,t)$ is divided by $m(t)$ is given by $r_1(a,b,c)\cdot t + r_0(a,b,c)$, where $r_1$ and $r_0$ are the polynomials defining the curve $\calX$. Since this remainder must be the zero polynomial, it follows that $r_1(a,b,c)=r_0(a,b,c)=0$. Hence, $(a,b,c)$ is a $K$-rational point on $\calX$, and we are in the second case of the lemma.
\end{proof}

We can now prove our main result concerning the local-global principle for points of period 5.

\begin{thm}\label{period5_lgp}
There exist at most finitely many elements $c\in K$ for which the statement $\mathcal E^{\ast}(f_c,5)$ is false. The same holds for the statement $\mathcal E(f_c,5)$.
\end{thm}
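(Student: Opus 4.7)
The plan is to combine the structural classification of Lemma \ref{period5_splitting} with the geometric finiteness statement of Lemma \ref{period5_finiteness_lem} and invoke Faltings' theorem on the two relevant curves.

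First I would reduce to the statement $\mathcal E^{\ast}(f_c,5)$: the discriminant of $\Phi_{5,c}$ is a nonzero polynomial in $c$ (over $\Q$), so it vanishes for only finitely many $c\in K$, and outside of this finite set Lemma \ref{stars_lem} tells us that $\mathcal E^{\ast}(f_c,5)$ implies $\mathcal E(f_c,5)$. Thus it suffices to show that there are only finitely many $c\in K$ with $\disc\Phi_{5,c}\neq 0$ for which $\mathcal E^{\ast}(f_c,5)$ fails.

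Fix such a $c$ and apply Lemma \ref{period5_splitting}. This leaves three cases to handle. In Cases 1 and 2, some 5-cycle $\{\alpha,f_c(\alpha),\ldots,f_c^4(\alpha)\}$ of $f_c$ is $\Gal(\bar K/K)$-invariant. Then the second bulleted property of the modular curve $C_0(5)$ recalled just before Lemma \ref{period5_finiteness_lem} produces a $K$-rational point $(c,\theta_5(c,\alpha))\in C_0(5)(K)$. Since $C_0(5)$ has genus 2, Faltings' theorem implies that $C_0(5)(K)$ is finite, so projection onto the $c$-coordinate shows that only finitely many $c\in K$ can arise from Cases 1 and 2. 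In Case 3, we have $D=\{2\}$, so every irreducible factor of $\Phi_{5,c}$ is quadratic; in particular $\Phi_{5,c}$ has an irreducible quadratic factor in $K[x]$, and Lemma \ref{period5_finiteness_lem} applies. The conclusion is that $c$ is the third coordinate of a $K$-rational point either on $C_0(5)$ (genus 2) or on $\calX$ (genus 11). Both of these curves have genus at least 2, so by Faltings' theorem each has finitely many $K$-rational points, and therefore only finitely many $c\in K$ can arise from Case 3.

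Combining the three cases, only finitely many $c\in K$ can be exceptions to $\mathcal E^{\ast}(f_c,5)$. Together with the finitely many $c$ with $\disc\Phi_{5,c}=0$, this yields the same conclusion for $\mathcal E(f_c,5)$. The work of the theorem has essentially been done already: the nontrivial ingredients are the Galois-theoretic and group-theoretic classification of Lemma \ref{period5_splitting} and the curve-theoretic reduction of Lemma \ref{period5_finiteness_lem}. The remaining step is a clean application of Faltings' theorem on two explicit curves, so I do not expect any genuine obstacle at this final stage; the most delicate point to state carefully is simply that each of the three cases funnels $c$ into the image of a finite set of rational points under a morphism to the $c$-line.
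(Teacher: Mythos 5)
Your proposal is correct and follows essentially the same route as the paper: reduce to the case $\disc\Phi_{5,c}\neq 0$ via Lemma \ref{stars_lem}, apply the trichotomy of Lemma \ref{period5_splitting} to funnel $c$ either through the Galois-invariant 5-cycle (a point on $C_0(5)$) or through the quadratic-factor case via Lemma \ref{period5_finiteness_lem} (a point on $C_0(5)$ or $\calX$), and conclude by Faltings. The only cosmetic slip is describing $c$ as the ``third coordinate'' of a point on $C_0(5)$ (it is the first), but this does not affect the argument.
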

\begin{proof}
Since there are only finitely many $c\in K$ such that $\disc\Phi_{5,c}=0$, it suffices to prove the result assuming that $\disc\Phi_{5,c}\ne 0$. Suppose that $c\in K$ is such that $\disc\Phi_{5,c}\ne 0$ and at least one of the statements $\mathcal E^{\ast}(f_c,5)$ and  $\mathcal E(f_c,5)$ fails to hold. By Lemma \ref{stars_lem}, this implies that $\mathcal E^{\ast}(f_c,5)$ does not hold. Applying Lemma \ref{period5_splitting} we see that  either $f_c$ has a $\Gal(\bar K/K)$-invariant 5-cycle, or $\Phi_{5,c}$ has an irreducible quadratic factor in $K[x]$. In the former case, $c$ is a coordinate of a $K$-rational point on $C_0(5)$; in the latter case, by Lemma \ref{period5_finiteness_lem}, $c$ is a coordinate of a $K$-rational point on either $C_0(5)$ or $\calX$. Since $C_0(5)$ has genus 2 and $\calX$ has genus 11, it follows from Faltings' Theorem that there are only finitely many possible values for $c$.
\end{proof}

Restricting now to the case $K=\Q$, we can make Theorem \ref{period5_lgp} more precise. As seen in the proof of the theorem, understanding the failure of the statement $\mathcal E^{\ast}(f,5)$ is closely related -- via Lemma \ref{period5_finiteness_lem} -- to the problem of determining all rational points on the curves $C_0(5)$ and $\calX$.

\begin{lem}\label{C05_points_lem}
The set of rational points on $C_0(5)$ is given by
\[C_0(5)(\Q)=\{(-2,-1), (-4/3,-1), (-16/9,-7/3), (-64/9,10/3)\}.\]
\end{lem}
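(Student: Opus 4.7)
The plan is that this lemma is essentially the main computational result of Flynn--Poonen--Schaefer \cite{flynn-poonen-schaefer}, so the proof reduces either to quoting their determination of $C_0(5)(\Q)$ directly or to redoing that Chabauty computation. I would take the following steps.

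First, I would convert the defining equation $\tau_5(c,t)=0$ from \eqref{tau5_expression} into a hyperelliptic Weierstrass form $y^2 = h(x)$ for the genus $2$ curve $C_0(5)$, for instance by viewing $\tau_5$ as a cubic in $c$ and completing the cube to isolate a square root. This gives a smooth Weierstrass model, and the four listed points pull back to four explicit affine points on it. I would also check directly that the four listed points actually lie on $C_0(5)$ by substituting into $\tau_5$.

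Second, I would compute the Mordell--Weil group $J(\Q)$ of the Jacobian $J$ of this model by $2$-descent. The outcome of the calculation performed in \cite{flynn-poonen-schaefer} is that $J(\Q)$ has rank $1$, with a generator (modulo torsion) arising from a difference of two of the four known rational points. This descent step, which requires analyzing the $2$-Selmer group via the arithmetic of several quartic number fields, is where essentially all of the work lies.

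Third, since $\operatorname{rank} J(\Q) = 1 < 2 = \operatorname{genus}(C_0(5))$, I would apply Chabauty--Coleman at a prime $p$ of good reduction (Flynn--Poonen--Schaefer use $p=13$). Concretely, the $p$-adic logarithm of a generator of $J(\Q)$ picks out a nonzero regular $1$-form $\omega$ on $C_0(5)_{\Q_p}$ whose Coleman integral vanishes on $J(\Q)$; this integral pulls back to a $\Q_p$-analytic function on $C_0(5)(\Q_p)$ whose zero set is finite and contains $C_0(5)(\Q)$. Counting zeros in each residue disk modulo $p$, using the standard Newton polygon bound together with the fact that each of the four known points forces a zero in its residue disk, yields a total bound of four, matching the four known points exactly.

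The main obstacle is the $2$-descent establishing $\operatorname{rank} J(\Q) = 1$; once that is in hand, the Chabauty step is essentially mechanical, and the overall strategy is precisely that of \cite{flynn-poonen-schaefer}.
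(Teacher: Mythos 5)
Your plan correctly identifies that this lemma ultimately rests on the Chabauty--Coleman computation of Flynn--Poonen--Schaefer, and the paper does lean on exactly that computation. But there is a genuine gap in the transfer step between the affine plane model $C_0(5):\tau_5(c,t)=0$ and the smooth hyperelliptic model, and your account of it is not quite right.

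First, a technical point: $\tau_5$ is \emph{cubic}, not quadratic, in $c$, so ``completing the cube to isolate a square root'' does not produce a Weierstrass model $y^2=h(x)$; the actual birational map $\psi:C_0(5)\dasharrow\calC$ to the curve $\calC:y^2=x^6+8x^5+22x^4+22x^3+5x^2+6x+1$ is more involved, with $\psi_1$ linear in $c$ but with a $(t+1)$ in the denominator (and $(t+1)^3$ in the denominator of $\psi_2$). Second, and more seriously: the two models are birational but \emph{not} isomorphic, and in fact $\calC(\Q)$ has \emph{six} rational points (two at infinity, four affine), while the claim is that $C_0(5)(\Q)$ has four. Your statement that the Chabauty argument ``yields a total bound of four, matching the four known points exactly'' cannot be right: the Chabauty bound is on the smooth model and must account for all six points of $\calC(\Q)$. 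The missing step — which is in fact most of the content of the paper's proof — is to carefully push $\calC(\Q)$ back to $C_0(5)(\Q)$ through $\psi$. This requires (i) separately analyzing the exceptional locus $t=-1$ where $\psi$ is undefined (there one factors $\tau_5(c,-1)=(2+c)(4+3c)^2$ directly, producing $(-2,-1)$ and $(-4/3,-1)$), and then (ii) for $t\ne -1$, pulling back each of the four affine points of $\calC$ and checking which fibers actually contain rational points of $C_0(5)$ (only two do, giving $(-16/9,-7/3)$ and $(-64/9,10/3)$). Without this careful bookkeeping across the birational map, one cannot conclude the lemma from the FPS computation.
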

\begin{proof}
In \cite{flynn-poonen-schaefer}, Flynn-Poonen-Schaefer show that $C_0(5)$ is birational to the hyperelliptic curve
\begin{equation}\label{fps_curve_def}
\calC:\;\; y^2=x^6+8x^5+22x^4+22x^3+5x^2+6x+1,
\end{equation}
and they prove that $\calC$ has exactly six rational points; namely the two points at infinity and the affine points $(0,\pm 1)$ and $(-3,\pm 1)$. A birational map $\psi:C_0(5)\dasharrow\calC$ is given by $\psi=\left(\psi_1,\psi_2\right)$, where
\[\psi_1(c,t)=-\frac{3t^2 + 9t + 3c + 10}{4(t + 1)}\;\;\;\text{and}\;\;\;\psi_2(c,t)=-\frac{R(c,t)}{32(t + 1)^3}.\]

Here,
\begin{multline*}
R(c,t) = 12t^6 + 48t^5 + (24c + 65)t^4 + (72c - 6)t^3 + \\(12c^2 + 122c - 23)t^2 +(24c^2 + 130c + 72)t + 21c^2 + 80c + 76.
\end{multline*}

To determine all rational points on $C_0(5)$, we begin by finding those points $(c_0,t_0)$ having $t_0=-1$. Substituting $t_0=-1$ into the defining equation of $C_0(5)$, we obtain 
\[\tau_5(c,-1)=(2 + c) (4 + 3c)^2.\]
It follows that $(-2,-1)$ and $(-4/3,-1)$ are rational points on $C_0(5)$, and that these are the only points with $t_0=-1$.

Now suppose that $P=(c_0,t_0)$ is a rational point on $C_0(5)$ with $t_0\ne -1$. Then the map $\psi$ is defined at $P$, and $\psi(P)$ is an affine rational point on $\calC$; hence $P$ belongs to the fiber above one of the points $(0,\pm 1), (-3,\pm 1)$. Pulling back each one of these points via the map $\psi$, we find the new points $(-16/9, -7/3)$ and $(-64/9, 10/3)$, and no other point. Hence, these are the only rational points on $C_0(5)$ having $t_0\ne -1$.
\end{proof}

Having determined all rational points on $C_0(5)$, we would like to do the same for the curve $\calX$; unfortunately, the high genus of $\calX$ makes this considerably harder. An extensive search for rational points on $\calX$ yields only the two points $(1, 8, -2)$ and $(-2,-1,-4/3)$, so we expect that there is no other rational point. However, we have not found a proof of this by elementary means; in particular, it is not clear whether $\calX$ covers any curve of lower (positive) genus. It is likely that Chabauty-Coleman techniques can be used to prove that there are only two rational points on $\calX$. Indeed, let $\tilde\calX$ be the nonsingular projective model of $\calX$. Using the construction of $\calX$, one can show that the Jacobian variety of the curve $\calC$ defined in \eqref{fps_curve_def} is an isogeny factor of the Jacobian of $\tilde\calX$. As proved in \cite{flynn-poonen-schaefer}, the group of rational points on $\Jac(\calC)$ has rank 1; this suggests that a Chabauty argument on $\tilde\calX$ should be feasible. The details of this computation will appear in \cite{doyle-krumm-wetherell}.

Returning to the local-global principle, we now work towards improving Theorem \ref{period5_lgp} in the case $K=\Q$. More precisely, our goal is to describe all rational values of $c$ for which the statement $\mathcal E^{\ast}(f_c,5)$ may not hold. As remarked earlier, a relevant question for this purpose is whether the polynomial $\Phi_{5,c}$ can have an irreducible quadratic factor. The following proposition shows that, if we have found all rational points on the curve $\calX$, then this cannot occur.

\begin{prop}\label{period_5_nonsplitting}
Let $f\in\Q[x]$ be a quadratic polynomial and let $c$ be the unique rational number such that $f$ is linearly conjugate to $f_c$. Suppose that the dynatomic polynomial $\Phi_{5,f}$ has an irreducible quadratic factor. Then $c\notin\{-2,-4/3\}$, and there exist $a,b\in\Q$ such that $(a,b,c)\in\calX(\Q)$. In particular, $\calX$ has more than two rational points.
\end{prop}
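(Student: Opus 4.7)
The plan is to apply Lemma \ref{period5_finiteness_lem} to the hypothesis and then rule out its first alternative via explicit factorization of $\Phi_{5,c}$ at the finitely many $c$-values that arise from $C_0(5)(\Q)$. By Corollary \ref{dynatomic_conjugate_cor}, $\Phi_{5,f}$ and $\Phi_{5,c}$ have the same factorization type over $\Q$, so $\Phi_{5,c}$ inherits an irreducible quadratic factor, and I work with the normalized polynomial throughout.

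Applying Lemma \ref{period5_finiteness_lem} to an irreducible quadratic factor of $\Phi_{5,c}$ produces two alternatives: either $(c,\theta)\in C_0(5)(\Q)$ for some $\theta\in\Q$, or $(a,b,c)\in\calX(\Q)$ for some $a,b\in\Q$. By Lemma \ref{C05_points_lem}, the first alternative forces $c$ into the finite set $\{-2,-4/3,-16/9,-64/9\}$. The core of the proof is to eliminate this alternative entirely: for each of these four values, I would compute $\Phi_{5,c}(x)\in\Q[x]$ (a polynomial of degree $30$) and factor it over $\Q$, verifying in each case that no irreducible factor has degree $2$. For $c=-2$ this is transparent from the Chebyshev structure of $f_{-2}(x)=x^2-2$: its thirty period-$5$ points are the numbers $2\cos(2\pi k/n)$ with $n\in\{11,31,33\}$, grouped into three full Galois orbits over $\Q$ and yielding irreducible factors of degrees $5$, $10$, and $15$. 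For the remaining three values of $c$ the factorization is carried out with a computer algebra system.

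Once all four candidates are excluded, the hypothesis forces the second alternative of Lemma \ref{period5_finiteness_lem}, producing $(a,b,c)\in\calX(\Q)$. The same elimination yields $c\notin\{-2,-4/3\}$; since the two known rational points of $\calX$ have $c$-coordinates $-2$ and $-4/3$, the resulting point is a third point of $\calX(\Q)$, so $|\calX(\Q)|>2$.

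The principal obstacle is the computational one: four factorizations of degree-$30$ polynomials over $\Q$. A minor subtlety worth noting is that $\tau_5(-4/3,t)$ has $t=-1$ as a double root, corresponding to two distinct Galois-invariant $5$-cycles of $f_{-4/3}$ with the same cycle sum, so one should verify explicitly that this coincidence does not conceal an irreducible quadratic factor in $\Phi_{5,-4/3}$ --- a check that the explicit factorization settles cleanly.
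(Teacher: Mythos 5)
Your proposal follows the paper's proof essentially step for step: apply Lemma \ref{period5_finiteness_lem}, use Lemma \ref{C05_points_lem} to reduce the first alternative to $c\in\{-2,-4/3,-16/9,-64/9\}$, exclude all four by factoring $\Phi_{5,c}$ (the paper tabulates the degree multisets $\{5,10,15\}$, $\{10,20\}$, $\{5,25\}$, $\{5,25\}$), and conclude that the second alternative holds with $c\notin\{-2,-4/3\}$, producing a third rational point on $\calX$. The Chebyshev digression for $c=-2$ is a nice alternative to the blind factorization there, but it does not change the structure of the argument.
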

\begin{proof}
By Lemma \ref{period5_finiteness_lem} we know that either $(c,\theta)\in C_0(5)(\Q)$ for some rational number $\theta$, or $(a,b,c)\in\calX(\Q)$ for some $a,b\in\Q$. If the former case occurs, then Lemma \ref{C05_points_lem} implies that $c=-2, -4/3, -16/9,$ or $-64/9$. Factoring the polynomials $\Phi_{5,c}$ for these four values of $c$, we obtain the following sets $D$ of degrees of irreducible factors:
\begin{center}
\begin{table}[h]\label{phi5_factorization_table}
\begin{tabular}{| c | c |}
\hline
$c$ & $D$ \\
\hline \hline
-2 & \{5,10,15\} \\
\hline
-4/3 & \{10,20\} \\
\hline
-16/9 & \{5,25\} \\
\hline
-64/9 & \{5,25\} \\
\hline
\end{tabular}

\vspace{3mm}
\caption{Degrees of irreducible factors of $\Phi_{5,c}$.}
\end{table}
\end{center}

\vspace{-12mm}
This leads to a contradiction, since there is no quadratic factor of $\Phi_{5,c}$ for these values of $c$. Hence, there must exist $a,b\in\Q$ such that $(a,b,c)\in\calX(\Q)$. Moreover, the above table shows that $c$ cannot be $-2$ or $-4/3$. The last statement of the proposition follows from the fact that the two known rational points on $\calX$ have $c$-coordinate $-2$ or $-4/3$.
\end{proof}

We end this section by showing that if indeed $\calX$ has only two rational points, then there is no exception to the local-global principle $\mathcal E^{\ast}(f,5)$ for quadratic polynomials over $\Q$. 

\begin{thm}\label{period5_rational_thm} Suppose that $\#\calX(\Q)=2$, and let $f\in\Q[x]$ be a quadratic polynomial. Then $\delta(\calS_{\Phi_{5,f}})<1$. In particular, there exist infinitely many primes $p$ such that $f$ does not have a point of period $5$ in $\Q_p$.
\end{thm}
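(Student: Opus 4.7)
The strategy mirrors the proof of Theorem \ref{period4_rational_thm}. Let $c\in\Q$ be the unique rational number such that $f$ is linearly conjugate to $f_c$; since $\calS_{\Phi_{5,f}}=\calS_{\Phi_{5,c}}$, it suffices to prove $\delta(\calS_{\Phi_{5,c}})<1$. By the theorem of Flynn-Poonen-Schaefer \cite{flynn-poonen-schaefer}, $f_c$ has no rational point of period $5$, so by Corollary \ref{period-dynatomic_cor} (or directly from Proposition \ref{dynatomic_properties}) $\Phi_{5,c}$ has no root in $\Q$. Thus the premise of $\mathcal E^\ast(f_c,5)$ is satisfied, and we need only establish its conclusion. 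I split the argument by whether $\disc\Phi_{5,c}$ vanishes.

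Case 1: $\disc\Phi_{5,c}\neq 0$. Argue by contradiction, supposing $\delta(\calS_{\Phi_{5,c}})=1$. Then Lemma \ref{period5_splitting} leaves three possibilities for the pair (Galois group, degree set $D$). In the first two cases, $f_c$ has a $\Gal(\bar\Q/\Q)$-invariant $5$-cycle, so $(c,\theta)\in C_0(5)(\Q)$ for some $\theta\in\Q$; by Lemma \ref{C05_points_lem}, $c\in\{-2,-4/3,-16/9,-64/9\}$. The table in the proof of Proposition \ref{period_5_nonsplitting} records the sets $D$ attached to these four values as $\{5,10,15\}$, $\{10,20\}$, $\{5,25\}$, $\{5,25\}$ respectively, none of which equals $\{5\}$ or $\{2,3,5\}$. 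In the third case of Lemma \ref{period5_splitting}, $D=\{2\}$, so $\Phi_{5,c}$ has at least one irreducible quadratic factor in $\Q[x]$; Proposition \ref{period_5_nonsplitting} then furnishes a rational point on $\calX$ with $c$-coordinate outside $\{-2,-4/3\}$, contradicting $\#\calX(\Q)=2$ since the two known points of $\calX$ have $c$-coordinates $-2$ and $-4/3$.

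Case 2: $\disc\Phi_{5,c}=0$. The expression $\disc\Phi_{5,c}$ is a polynomial in $c$, so this condition holds for only finitely many rational values of $c$. For each such $c$, one computes the factorization of $\Phi_{5,c}$ in $\Q[x]$ explicitly, writes $\calS_{\Phi_{5,c}}\subseteq\bigcup_i\calS_{P_i}$ over the distinct irreducible factors $P_i$, and bounds
\[\delta(\calS_{\Phi_{5,c}})\leq\sum_i\delta(\calS_{P_i})\]
using Corollary \ref{normal_poly_cor} (for normal factors) or, more generally, Theorem \ref{bb_density_thm} with the software implementation from \cite{paper_code}, exactly as was done at $c=-5/4$ in the proof of Theorem \ref{period4_rational_thm}. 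Checking that this bound is strictly less than $1$ for each exceptional $c$ finishes Case 2.

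Finally, the "infinitely many primes" conclusion is immediate: since $\delta(\calS_{\Phi_{5,f}})<1$, infinitely many primes $p$ lie outside $\calS_{\Phi_{5,f}}$, and for every such $p$ part (1) of Proposition \ref{dynatomic_properties} forces $f$ to have no point of period $5$ in $\Q_p$. The main obstacle is the concrete computation in Case 2: one must enumerate the rational $c$ with $\disc\Phi_{5,c}=0$, factor $\Phi_{5,c}$ over $\Q$, and in each case certify via Theorem \ref{bb_density_thm} that the resulting density is strictly below $1$. This is purely mechanical but, unlike Case 1, is not reducible to the group-theoretic machinery developed in \S\ref{lgp_failure_section} and thus must be handled by hand (with computer assistance).
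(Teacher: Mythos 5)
Your Case 1 argument is essentially the paper's and is correct: under the hypothesis $\disc\Phi_{5,c}\neq 0$ you invoke Lemma \ref{period5_splitting}, use $C_0(5)(\Q)$ together with Table 1 to rule out the two ``Galois-invariant $5$-cycle'' alternatives, and use Proposition \ref{period_5_nonsplitting} together with $\#\calX(\Q)=2$ to rule out the $D=\{2\}$ alternative. (The paper reaches the same contradiction by first eliminating the quadratic-factor case via $\calX$ and then eliminating the invariant-cycle case via the values $c\in\{-2,-16/9,-64/9\}$ from \cite[\S 4]{flynn-poonen-schaefer}; your route via Lemma \ref{C05_points_lem}, which adds $c=-4/3$ to the list, is equally valid since the table rules out $-4/3$ as well.)

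The genuine gap is your Case 2. You treat $\disc\Phi_{5,c}=0$ as a finite set of exceptional rational $c$ to be disposed of by explicit factorization and the bound $\delta(\calS_{\Phi_{5,c}})\leq\sum_i\delta(\calS_{P_i})$, but you never actually identify these $c$, carry out the computation, or argue that the bound is below $1$; you explicitly defer this to a ``mechanical'' check. In fact that check is unnecessary, and this is the observation your proposal misses: the paper computes
\[
\disc\Phi_{5,c} = -(31 - 36c + 16c^2 + 64c^3 + 256c^4)^4\cdot\Psi(c)^5,
\]
with $\Psi$ irreducible of degree $11$ over $\Q$, and neither the quartic nor $\Psi$ has a rational root. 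Hence $\disc\Phi_{5,c}\neq 0$ for \emph{every} $c\in\Q$, so Case 2 is vacuous and the whole argument collapses to your Case 1. This is a real structural difference from the $n=4$ situation (where $c=-5/4$ does kill the discriminant and an ad hoc density bound is genuinely needed), and your proposal imports the $n=4$ template without noticing that no such exceptional $c$ exists when $n=5$. As written, your Case 2 is an unfinished appeal to computation; to complete your proof you should either verify that $\disc\Phi_{5,c}\ne 0$ for all rational $c$ (as the paper does), or actually enumerate the exceptional $c$ and certify the density bounds — the former is both cleaner and the route the paper takes. Note also that the bound $\sum_i\delta(\calS_{P_i})<1$ is not guaranteed in general, so even if exceptional $c$ existed, your Case 2 strategy might not close.
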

\begin{proof}
We proceed by contradiction. Suppose that $\delta(\calS_{\Phi_{5,f}})=1$ and let $c$ be the unique rational number such that $f$ is linearly conjugate to $f_c$. By a straightforward computation we obtain
\[\disc \Phi_{5,c} = -(31 - 36c + 16c^2 + 64c^3 + 256c^4)^4\cdot \Psi(c)^5,\]
where $\Psi(x)\in\Q[x]$ is an irreducible polynomial of degree 11. It follows that $\disc\Phi_{5,c}\ne 0$ and therefore $\disc\Phi_{5,f}\ne 0$. By work of Flynn-Poonen-Schaefer \cite{flynn-poonen-schaefer} we know that $\Phi_{5,f}$ cannot have a rational root; hence, the assumption that $\delta(\calS_{\Phi_{5,f}})=1$ means that the statement $\mathcal E^{\ast}(f,5)$ does not hold. Applying Lemma \ref{period5_splitting} we conclude that either $f$ has a $\Gal(\bar \Q/\Q)$-invariant 5-cycle, or $\Phi_{5,f}$ has an irreducible quadratic factor. Since we are assuming that $\#\calX(\Q)=2$, Proposition \ref{period_5_nonsplitting} implies that the latter case cannot occur. Hence, $f$ must have a $\Gal(\bar \Q/\Q)$-invariant 5-cycle. As shown in \cite[\S 4]{flynn-poonen-schaefer}, this implies that $c\in\{-2, -16/9,-64/9\}$. Referring to Table 1, we see that the set $D$ of degrees of irreducible factors of $\Phi_{5,f}$ must therefore be $\{5,10,15\}$ or $\{5,25\}$. However, by Lemma \ref{period5_splitting}, the set $D$ can only be $\{5\}, \{2,3,5\}$, or $\{2\}$. This is a contradiction, so we conclude that $\delta(\calS_{\Phi_{5,f}})<1$.
\end{proof}

\section{A strong form of a conjecture of Poonen}\label{strong_poonen_section}

In \cite{poonen_prep}, Poonen conjectures that if $f\in\Q[x]$ is a quadratic polynomial and $n>3$, then $f$ does not have a rational point of period $n$. Motivated by our results in \S\ref{quadratic_section}, we propose the following stronger conjecture.

\begin{conj}\label{strong_poonen_conj}
Let $f\in\Q[x]$ be a quadratic polynomial and $n>3$ an integer. Then $\delta(\calS_{\Phi_{n,f}})<1$.
\end{conj}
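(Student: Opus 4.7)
The plan is to extend the approach used in Theorems \ref{period4_lgp} and \ref{period5_lgp} from the fixed values $n=4,5$ to all integers $n > 3$. After discarding the finitely many $c \in \Q$ with $\disc \Phi_{n,c} = 0$, Lemma \ref{stars_lem} reduces the conjecture to showing that $\mathcal{E}^{\ast}(f_c, n)$ fails for only finitely many $c$. First I would invoke Theorem \ref{main_alg_thm} with $s = 2$: whenever $\mathcal{E}^{\ast}(f_c,n)$ fails, the Galois group of $\Phi_{n,f_c}$, together with the multiset of degrees of its irreducible factors, must appear among the finite list of pairs $(H, I)$ produced by Algorithm \ref{main_algorithm} applied to $(n, 2)$.

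Next, for each pair $(H, I)$ in that list I would carve out a dynamical moduli variety $V_{H,I}/\Q$ whose $\Q$-points parameterize precisely the $c$ realizing that Galois structure and factorization profile. The curves $C_0(n)$ and $\calX$ used in the period-$5$ analysis are exactly such moduli objects tailored to individual subcases: when $(H, I)$ encodes a Galois-invariant $n$-cycle one uses a quotient of the dynatomic curve $C_1(n)$ or $C_0(n)$, and when it encodes an irreducible factor of degree $d$ one uses the vanishing locus of the remainder when $\Phi_n(c,x)$ is divided by a generic degree-$d$ polynomial, exactly as $\calX$ was constructed from $\tau_5$. The conjecture then follows, in parallel with the proofs of Theorems \ref{period4_lgp} and \ref{period5_lgp}, provided each $V_{H,I}$ has only finitely many $\Q$-points; whenever $V_{H,I}$ (or its normalization) has geometric genus at least $2$, Faltings' theorem handles it immediately.

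The main obstacles are of two very different flavors. Algorithmically, the enumeration at the core of Algorithm \ref{main_algorithm} is already expensive at $n=5$ (over $20{,}000$ conjugacy classes of subgroups of $W$), and both $|W|$ and the lattice of subgroups grow super-exponentially with $n$; so a uniform structural description of which subgroups of the wreath product $(\Z/n\Z) \wr S_r$ can satisfy conditions (1) and (2) of Theorem \ref{density1_conj_thm} is essential before case analysis becomes feasible for general $n$. The harder obstacle, however, is arithmetic-geometric: one must rule out, uniformly in $n$, the existence of genus-$0$ components or positive-rank elliptic components of $V_{H,I}$, since any such component would supply infinitely many exceptional $c$. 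The thin-set theorem preceding the conjecture already shows the exceptional $c$ form a thin set, which strongly constrains the shape of the varieties $V_{H,I}$ but does not preclude low-genus components. Upgrading thinness to finiteness will require genuinely arithmetic input, most plausibly a Chabauty-Coleman argument component by component (as is already envisioned in \cite{doyle-krumm-wetherell} for the single curve $\calX$), together with an abstract argument showing that the dynatomic origin of $V_{H,I}$ forces its geometry to become hyperbolic as $n$ grows; this last step is where I expect the essential difficulty to lie.
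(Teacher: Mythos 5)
This statement is Conjecture~\ref{strong_poonen_conj}, not a theorem: the paper explicitly leaves it open and offers only partial evidence (the $n=4,5$ cases in Theorems~\ref{period4_rational_thm} and~\ref{period5_rational_thm}, and the thin-set result Theorem~\ref{strong_poonen_evidence}), so there is no ``paper's own proof'' to compare against, and you are right to frame what you write as a research program rather than a proof. Your outline does track the paper's methodology for $n=4,5$: reduce via Lemma~\ref{stars_lem} and Theorem~\ref{main_alg_thm} to a finite menu of Galois types, then build moduli objects (generalizing $C_0(n)$ and $\calX$) whose rational points parameterize the exceptional $c$, and apply Faltings where the genus permits. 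You also rightly flag the two bottlenecks the paper itself highlights in its concluding remarks: the combinatorial blow-up in Algorithm~\ref{main_algorithm}, and the need to control the geometry of the parameter varieties for all $n$.

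There is, however, a logical gap in the reduction that is worth naming precisely. You write that Lemma~\ref{stars_lem} ``reduces the conjecture to showing that $\mathcal{E}^{\ast}(f_c,n)$ fails for only finitely many $c$.'' That is not the conjecture. Conjecture~\ref{strong_poonen_conj} asserts $\delta(\calS_{\Phi_{n,f}})<1$ for \emph{every} quadratic $f$ and every $n>3$, i.e.\ zero exceptions. Finiteness of exceptions (the analogue of Theorems~\ref{period4_lgp} and~\ref{period5_lgp}) is a genuinely weaker conclusion. In the paper, passing from finiteness to the full statement for $n=4$ and $n=5$ required an additional, separate argument: invoking Morton's theorem and Panraksa's bound on quadratic factors for $n=4$, and for $n=5$ invoking Flynn--Poonen--Schaefer, a complete list of $\Q$-points on $C_0(5)$, and the (conditional) determination of $\calX(\Q)$, followed by an explicit factorization of $\Phi_{5,c}$ at each of the finitely many candidate values of $c$. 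Since Faltings' theorem is ineffective, your plan as stated gives no mechanism to carry out the analogous final elimination for general $n$; this is a missing idea over and above the difficulties you already identify. Separately, you mischaracterize the thin-set theorem (Theorem~\ref{strong_poonen_evidence}): it does not constrain the moduli varieties $V_{H,I}$ in the way you suggest. It proceeds by an entirely different and more direct route---Bousch's irreducibility of $\Phi_n(c,x)$ over $\C$, the Hilbert Irreducibility Theorem, and Corollary~\ref{irred_density_cor}---and bypasses the Galois-type classification altogether.
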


We have shown in Theorems \ref{period4_rational_thm} and \ref{period5_rational_thm} that the above conjecture holds for $n=4$, and assuming the curve $\calX$ has only two rational points, that it also holds for $n=5$. For arbitrary values of $n$, we have the following result in support of Conjecture \ref{strong_poonen_conj}.

\begin{thm}\label{strong_poonen_evidence}
Let $n$ be a positive integer. There is a thin\footnote{See \cite[\S 9.1]{serre_lectures} for the definition of a thin set in the sense of Serre.} subset $I(n)\subseteq\Q$ with the following property: if $f\in\Q[x]$ is a quadratic polynomial linearly conjugate to $f_c$ with $c\not\in I(n)$, then $\delta(\calS_{\Phi_{n,f}})<1$.
\end{thm}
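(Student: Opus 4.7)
The plan is to invoke Hilbert's irreducibility theorem, treating $c$ as an indeterminate so that $\Phi_{n,c}(x)$ is viewed as an element of $\Q(c)[x]$. The key input from outside the paper is the generic irreducibility of the dynatomic polynomial: a theorem of Bousch (later reproved dynamically by Morton and Vivaldi) asserts that the bivariate polynomial $\Phi_n(c,x) \in \Q[c,x]$ is absolutely irreducible, and in particular $\Phi_{n,c}(x)$ is irreducible as an element of $\Q(c)[x]$. The degree formula \eqref{dynatomic_degree} applied with $\deg f_c = 2$ shows that $\deg \Phi_{n,c} \ge 2$ for every $n \ge 1$.

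With generic irreducibility in hand, Hilbert's irreducibility theorem (see \cite[\S 9.2]{serre_lectures}) produces a thin set $I(n) \subseteq \Q$ such that for every $c_0 \in \Q \setminus I(n)$ the specialization $\Phi_{n,c_0}(x) \in \Q[x]$ is still irreducible. Now suppose $f$ is a quadratic polynomial linearly conjugate to $f_{c_0}$ with $c_0 \notin I(n)$. Then $\Phi_{n,c_0}$ is irreducible in $\Q[x]$ of degree at least $2$, so Corollary \ref{irred_density_cor} yields $\delta(\calS_{\Phi_{n,c_0}}) < 1$. Finally, Corollary \ref{dynatomic_conjugate_cor2}(3) identifies $\calS_{\Phi_{n,f}}$ with $\calS_{\Phi_{n,c_0}}$, and the desired inequality $\delta(\calS_{\Phi_{n,f}}) < 1$ follows at once.

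The main (and essentially only) substantive obstacle is the generic irreducibility statement for $\Phi_n(c,x)$ cited above; once this is available, the rest of the argument is a direct combination of Hilbert's irreducibility theorem with Corollary \ref{irred_density_cor} and the conjugation-invariance encoded in Corollary \ref{dynatomic_conjugate_cor2}. The thin set $I(n)$ produced by this approach is inexplicit, but since the theorem asserts only existence, that is not a concern. One could, if desired, enlarge $I(n)$ by the finite (hence thin) set of $c \in \Q$ with $\disc\Phi_{n,c}=0$, but such an enlargement is not needed for the conclusion.
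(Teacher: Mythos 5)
Your proposal is correct and matches the paper's proof essentially step for step: Bousch's generic irreducibility of $\Phi_n(c,x)$, Hilbert irreducibility to produce the thin set $I(n)$, Corollary \ref{irred_density_cor} for the density bound, and conjugation invariance to transfer from $f_c$ to $f$. The only refinement you add is the explicit check that $\deg\Phi_{n,c}\ge 2$, which the paper leaves implicit but which is indeed needed to invoke Corollary \ref{irred_density_cor}.
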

\begin{proof}
It is known by a theorem of Bousch \cite[Chap. 3, Thm. 1]{bousch} that the polynomial $\Phi_{n}(c,x)$, viewed as an element of $\C[c,x]$, is irreducible. Hence, by the Hilbert Irreducibility Theorem (see \cite[\S 9]{serre_lectures} or \cite[Chap. 3]{serre_topics}), there is a thin set $I(n)\subset\Q$ such that if $c\notin I(n)$, then the polynomial $\Phi_{n,c}(x)\in\Q[x]$ is irreducible. Applying Corollary \ref{irred_density_cor} we see that if $c\not\in I(n)$, then $\delta(\calS_{\Phi_{n,c}})<1$. Thus, if $c\not\in I(n)$ and $f\in\Q[x]$ is a quadratic polynomial linearly conjugate to $f_c$, then $\delta(\calS_{\Phi_{n,f}})<1$.
\end{proof}

\subsection*{Concluding remarks}
In order to facilitate future progress on the dynamical local-global principle studied in this article, it would be desirable to have theoretical arguments explaining the output of Algorithm \ref{main_algorithm} without having to carry out the computation. In particular, why should the output be empty when $s=2$ and $n=1,2,$ or $3$? What explains the properties \eqref{period4_H_property},  \eqref{period5_H_1_10_property}, and \eqref{H11_property}, without which our analysis for $n=4$ and $5$ would not be possible? And why is it that, among the more than 20,000 groups checked in the algorithm with input $(5,2)$, only 12 pass the tests in step 6 of the algorithm? We hope that these questions will serve to motivate future research on Galois groups of dynatomic polynomials.

\subsection*{Acknowledgements} The author would like to thank John Doyle, Xander Faber, and Joseph Wetherell for helpful conversations during the preparation of this article; Imme Arce for help in generating Figure \ref{roots_labeling}; and the anonymous referees for their careful reading of the manuscript and suggested improvements.

\appendix
\section{Output of Algorithm \ref{main_algorithm} with input (5,2)}\label{appendix}
This appendix summarizes the output produced by our implementation of Algorithm \ref{main_algorithm} when the input to the algorithm is the pair $(5,2)$. The output consists of twelve pairs $(H,I)$, where $H$ is a subgroup of the symmetric group $S_{30}$ and $I=\{|H: \Stab_{H}(5i)|:1\le i\le 6\}$. The twelve pairs are labeled $(H_1, I_1),\ldots, (H_{12},I_{12})$ below. For each pair $(H, I)$ we give a set of generators for the group $H$ and we list the elements of the set $I$. In addition, we describe the structure of $H$ by giving a standard group that is isomorphic to it.

\bigskip
\textbf{Pair} $\bm{(H_{1}, I_1).}$
Generators of $H_1$:
\begin{align*}
&(6,7,8,9,10)(11,13,15,12,14)(16,20,19,18,17)(21,22,23,24,25)(26,29,27,30,28),\\
&(1,2,3,4,5)(6,7,8,9,10)(11,12,13,14,15)(16,17,18,19,20)(26,27,28,29,30)
\end{align*}
Index set $I_1$: $\{ 5 \}$

Structure of $H_1$: $\Z/5\Z\times\Z/5\Z$

\bigskip
\textbf{Pair} $\bm{(H_{2}, I_2).}$
Generators of $H_2$:
\begin{align*}
&(1,4,2,5,3)(6,9,7,10,8)(11,12,13,14,15)(16,17,18,19,20)(26,28,30,27,29),\\
&(1,5,4,3,2)(6,7,8,9,10)(11,12,13,14,15)(21,22,23,24,25)(26,29,27,30,28)
\end{align*}
Index set $I_2$: $\{ 5 \}$

Structure of $H_2$: $\Z/5\Z\times\Z/5\Z$

\bigskip
\textbf{Pair} $\bm{(H_{3}, I_3).}$
Generators of $H_3$:
\begin{align*}
&(6,7,8,9,10)(11,12,13,14,15)(16,18,20,17,19)(21,22,23,24,25)(26,27,28,29,30),\\
&(1,3,5,2,4)(6,8,10,7,9)(11,12,13,14,15)(16,17,18,19,20)(26,30,29,28,27)
\end{align*}
Index set $I_3$: $\{ 5 \}$

Structure of $H_3$: $\Z/5\Z\times\Z/5\Z$

\bigskip
\textbf{Pair} $\bm{(H_{4}, I_4).}$
Generators of $H_4$:
\begin{align*}
&(6,7,8,9,10)(11,15,14,13,12)(16,19,17,20,18)(21,23,25,22,24)(26,27,28,29,30),\\
&(1,5,4,3,2)(6,7,8,9,10)(11,12,13,14,15)(16,17,18,19,20)(26,29,27,30,28)
\end{align*}
Index set $I_4$: $\{ 5 \}$

Structure of $H_4$: $\Z/5\Z\times\Z/5\Z$

\bigskip
\textbf{Pair} $\bm{(H_{5}, I_5).}$
Generators of $H_5$:
\begin{align*}
&(6,7,8,9,10)(11,12,13,14,15)(16,20,19,18,17)(21,23,25,22,24)(26,29,27,30,28),\\
&(1,3,5,2,4)(6,8,10,7,9)(11,12,13,14,15)(16,17,18,19,20)(26,30,29,28,27)
\end{align*}
Index set $I_5$: $\{ 5 \}$

Structure of $H_5$: $\Z/5\Z\times\Z/5\Z$

\bigskip
\textbf{Pair} $\bm{(H_{6}, I_6).}$
Generators of $H_6$:
\begin{align*}
&(6,8,10,7,9)(11,15,14,13,12)(16,19,17,20,18)(21,25,24,23,22)(26,29,27,30,28),\\
&(1,5,4,3,2)(6,7,8,9,10)(11,12,13,14,15)(16,17,18,19,20)(26,29,27,30,28)
\end{align*}
Index set $I_6$: $\{ 5 \}$

Structure of $H_6$: $\Z/5\Z\times\Z/5\Z$

\bigskip
\textbf{Pair} $\bm{(H_{7}, I_7).}$
Generators of $H_7$:
\begin{align*}
&(6,8,10,7,9)(11,13,15,12,14)(16,20,19,18,17)(21,22,23,24,25)(26,28,30,27,29),\\
&(1,3,5,2,4)(6,8,10,7,9)(11,12,13,14,15)(16,17,18,19,20)(26,30,29,28,27)
\end{align*}
Index set $I_7$: $\{ 5 \}$

Structure of $H_7$: $\Z/5\Z\times\Z/5\Z$

\bigskip
\textbf{Pair} $\bm{(H_{8}, I_8).}$
Generators of $H_8$:
\begin{align*}
&(6,8,10,7,9)(11,13,15,12,14)(16,19,17,20,18)(21,24,22,25,23)(26,27,28,29,30),\\
&(1,3,5,2,4)(6,8,10,7,9)(11,12,13,14,15)(16,17,18,19,20)(26,30,29,28,27)
\end{align*}
Index set $I_8$: $\{ 5 \}$

Structure of $H_8$: $\Z/5\Z\times\Z/5\Z$

\bigskip
\textbf{Pair} $\bm{(H_{9}, I_9).}$
Generators of $H_9$:
\begin{align*}
&(6,8,10,7,9)(11,15,14,13,12)(16,19,17,20,18)(21,25,24,23,22)(26,29,27,30,28),\\
&(1,3,5,2,4)(6,8,10,7,9)(11,12,13,14,15)(16,17,18,19,20)(26,30,29,28,27)
\end{align*}
Index set $I_9$: $\{ 5 \}$

Structure of $H_9$: $\Z/5\Z\times\Z/5\Z$

\bigskip
\textbf{Pair} $\bm{(H_{10}, I_{10}).}$
Generators of $H_{10}$:
\begin{align*}
&(6,7,8,9,10)(11,13,15,12,14)(16,19,17,20,18)(21,24,22,25,23)(26,28,30,27,29),\\
&(1,5,4,3,2)(6,7,8,9,10)(11,12,13,14,15)(16,17,18,19,20)(26,29,27,30,28)
\end{align*}
Index set $I_{10}$: $\{ 5 \}$

Structure of $H_{10}$: $\Z/5\Z\times\Z/5\Z$

\bigskip
\textbf{Pair} $\bm{(H_{11}, I_{11}).}$
Generators of $H_{11}$:
\begin{align*}
&(16,26,21)(17,27,22)(18,28,23)(19,29,24)(20,30,25),\\
&(1,4,2,5,3)(6,11)(7,12)(8,13)(9,14)(10,15)(21,26)(22,27)(23,28)(24,29)(25,30)
\end{align*}
Index set $I_{11}$: $\{ 2, 3, 5 \}$

Structure of $H_{11}$: $S_3\times\left(\Z/5\Z\right)$

\bigskip
\textbf{Pair} $\bm{(H_{12}, I_{12}).}$
Generators of $H_{12}$:
\begin{align*}
&(11,16)(12,17)(13,18)(14,19)(15,20)(21,26)(22,27)(23,28)(24,29)(25,30),\\
&(1,6)(2,7)(3,8)(4,9)(5,10)(21,26)(22,27)(23,28)(24,29)(25,30)
\end{align*}
Index set $I_{12}$: $\{ 2 \}$

Structure of $H_{12}$: $\Z/2\Z\times\Z/2\Z$

\bibliography{ref_list}
\bibliographystyle{amsplain}
\end{document}